\DeclareSymbolFontAlphabet{\mathbb}{AMSb} 
\DeclareSymbolFontAlphabet{\mathbbl}{bbold}
\newcommand{\nc}{\newcommand}
\nc{\rnc}{\renewcommand}
\rnc{\P}{\mathbf P}
\nc{\R}{\mathbf R}
\rnc{\rm}{\mathrm}
\nc{\C}{\mathbf C}
\nc{\Q}{\mathbf Q}
\nc{\Z}{\mathbf Z}
\nc{\N}{\mathbf N}
\nc{\A}{\mathbf A}
\nc{\cH}{\mathcal{H}}
\nc{\rR}{\mathrm{R}}
\nc{\an}{\operatorname{an}}
\nc{\red}{\operatorname{red}}
\nc{\coker}{\operatorname{coker}}
\nc{\et}{\text{\'et}}
\nc{\fet}{\text{f\'et}}
\nc{\htt}{\operatorname{ht}}
\nc{\Nm}{\operatorname{Nm}}
\nc{\Ker}{\operatorname{Ker}}
\nc{\mmod}{\operatorname{mod}}
\nc{\End}{\operatorname{End}}
\nc{\Aut}{\operatorname{Aut}}
\nc{\cont}{\text{cont}}
\nc{\sep}{\text{sep}}
\nc{\Hom}{\mathrm{Hom}}
\nc{\Gal}{\mathrm{Gal}}
\nc{\Spec}{\text{Spec}\,}
\nc{\RZ}{\operatorname{RZ}}
\nc{\HHom}{\ud{\rm{Hom}}}
\nc{\hocolim}{\rm{hocolim}}
\nc{\diam}{\diamondsuit}
\nc{\cl}{\rm{cl}}
\rnc{\t}{\tau}
\nc{\mm}{\pmb{\mu}}
\rnc{\a}{\alpha}
\nc{\n}{\mathfrak n}
\nc{\m}{\mathfrak m}
\nc{\mfs}{\mathfrak s}
\nc{\Cat}{\cal{C}\rm{at}}
\rnc{\Pr}{\cal{P}\rm{r}^{\rm{L}}}
\nc{\p}{\mathfrak p}
\nc{\q}{\mathfrak q}
\nc{\Sym}{\operatorname{Sym}}
\nc{\codim}{\operatorname{codim}}
\nc{\rk}{\operatorname{rk}}
\nc{\GL}{\operatorname{GL}}
\nc{\SL}{\operatorname{SL}}
\nc{\Lie}{\operatorname{Lie}}
\nc{\Ind}{\operatorname{Ind}}
\nc{\Div}{\underline{Div}}
\nc{\Pic}{\mathbf{Pic}}
\nc{\uPic}{\underline{ \mathbf{Pic}}}
\nc{\rH}{\mathrm{H}}
\nc{\Spf}{\operatorname{Spf}}
\nc{\Frac}{\operatorname{Frac}}
\nc{\colim}{\operatorname{colim}}
\nc{\Spa}{\operatorname{Spa}}
\nc{\tr}{\operatorname{tr}}
\nc{\Corr}{\operatorname{Corr}}
\nc{\coh}{\operatorname{coh}}
\nc{\Coh}{\operatorname{Coh}}
\nc{\rPic}{\mathrm{Pic}}
\nc{\alg}{\mathrm{alg}}
\rnc{\an}{\operatorname{an}}
\nc{\xr}{\xrightarrow}
\nc{\eps}{\epsilon}
\nc{\ov}{\overline}
\nc{\ud}{\underline}
\nc{\wdh}{\widehat}
\nc{\I}{\mathcal I}
\nc{\F}{\mathcal F}
\nc{\G}{\mathcal G}
\nc{\E}{\mathcal E}
\nc{\M}{\mathcal M}
\nc{\cal}{\mathcal}
\rnc{\rm}{\mathrm}
\rnc{\bf}{\mathbf}
\nc{\X}{\mathfrak X}
\nc{\Y}{\mathfrak Y}
\nc{\T}{\mathfrak T}
\nc{\LL}{\mathcal{L}}
\rnc{\S}{\mathcal S}
\nc{\ra}{\rangle}
\nc{\os}{\overset}
\rnc{\O}{\mathcal O}
\nc{\J}{\mathcal J}
\theoremstyle{definition}
\newtheorem{thm1}{Theorem}[section]
\newtheorem{lemma1}[thm1]{Lemma}
\newtheorem{prop1}[thm1]{Proposition}
\newtheorem{rmk1}[thm1]{Remark}
\newtheorem{thm}{Theorem}[section]
\newtheorem{lemma}[thm]{Lemma}
\newtheorem{defn}[thm]{Definition}
\newtheorem{notation}[thm]{Notation}
\newtheorem{warning}[thm]{Warning}
\newtheorem{rmk}[thm]{Remark}
\newtheorem{exmpl}[thm]{Example}
\newtheorem{cor}[thm]{Corollary}
\begin{document}
\bibliographystyle{halpha-abbrv}
\title{Some foundational results in adic geometry}
\author{Bogdan Zavyalov}
\maketitle

\begin{abstract}
In this paper, we record some foundational results on adic geometry that seem to be missing in the existing literature. Namely, we develop the Proj construction and a theory of lci closed immersions in the context of locally noetherian analytic adic spaces. In the context of rigid-analytic spaces, these topics have previously been considered in \cite{conrad-ample} and \cite{Guo-Li} respectively. We also develop an \'etale six functor formalism in the analytic geometry and give a categorical description of lisse and constructible sheaves. All results of this paper are probably well-known to the experts. 
\end{abstract}

\tableofcontents
\section{Introduction} 

The main goal of this paper is to record certain foundational results about locally noetherian analytic adic spaces that seem to be missing in the existing literature. The primary motivation comes from our companion paper \cite{duality-revisited}, where we use many of the results of this paper to give a ``formal'' proof of Poincar\'e Duality in non-archimedean (and algebraic) geometry. However, we hope that these results could be useful for other people working in the area of general (noetherian) adic spaces. \smallskip

We now mention the main concepts discussed in this paper. First, we develop a theory of lci immersions for general locally noetherian analytic adic spaces. This theory has been worked out for rigid-analytic spaces in \cite[Appendix]{Guo-Li}; however, the notion of lci (immersions) on more general analytic adic spaces seems to be missing in the literature. We refer to Definition~\ref{defn:lci-immersion} for the precise definition of lci immersions and only summarize its main properties below:

\begin{thm}[Lemma~\ref{lemma:flat-base-change}, Corollary~\ref{cor:smooth-smooth-lci}, Corollary~\ref{cor:lci-relative-analytification}]
\begin{enumerate}
    \item Lci immersions of locally noetherian analytic adic spaces are closed under flat pullbacks;
    \item A section of a smooth, separated morphism is an lci immersion;
    \item A (relative) analytification of an lci immersion is an lci immersion.
\end{enumerate}
\end{thm}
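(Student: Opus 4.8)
The plan is to deduce all three assertions from the single commutative-algebra input that a regular sequence in a noetherian ring stays a regular sequence under any flat (local) base change, together with the fact that, by Definition~\ref{defn:lci-immersion}, being an lci immersion is local on the target and can be tested on affinoids: an immersion into $X = \Spa(A,A^+)$ is lci precisely when its defining ideal $I\subseteq A$ is generated by a regular sequence (equivalently, the condition may be read off from the noetherian local rings $\O_{X,x}$). I will also use the dictionaries developed earlier in the paper: flatness of a morphism of locally noetherian analytic adic spaces is detected on the associated ring maps, fiber products of such spaces exist, and (locally closed) immersions are stable under base change.

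Granting this, parts (1) and (3) are formal. For (1), let $i\colon Z\hookrightarrow X$ be lci and $h\colon X'\to X$ flat; then $i'\colon Z' = Z\times_X X'\hookrightarrow X'$ is an immersion, and choosing an affinoid $U = \Spa(A,A^+)\subseteq X$ on which $Z$ is cut out by a regular sequence $f_1,\dots,f_n\in A$ and an affinoid $U' = \Spa(B,B^+)\subseteq X'$ lying over $U$, flatness of $A\to B$ yields $IB = (f_1,\dots,f_n)B$ with $f_1,\dots,f_n$ still a regular sequence in $B$ — a nonzerodivisor remains one after flat base change, and one inducts because $A/(f_1,\dots,f_j)\to B/(f_1,\dots,f_j)B$ is again flat. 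Part (3) is the same computation applied to the (relative) analytification morphism $X^{\an}\to X$, which is flat on local rings and carries the ideal sheaf of $Z$ to that of $Z^{\an}$; hence $Z^{\an}\hookrightarrow X^{\an}$ is, locally, cut out by the images of a regular sequence, which stay regular.

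The substantive point is (2). Write $f\colon X\to S$ for the smooth separated morphism and $s\colon S\to X$ for the section. First, $s$ is a closed immersion, being the base change of the diagonal $\Delta_{X/S}\colon X\to X\times_S X$ along $(\mathrm{id}_X, s\circ f)$, and $\Delta_{X/S}$ is closed because $f$ is separated. To see that $s$ is lci — a question local on $X$, so we may work near a point $\xi = s(x)$ — I invoke the local structure of smooth morphisms: after shrinking $X$ around $\xi$, $f$ factors as $X\xrightarrow{g}\mathbb{B}^d_S\to S$ with $g$ \'etale and $\mathbb{B}^d_S$ the $d$-dimensional relative polydisc over $S$. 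Then $g\circ s$ is a section of $\mathbb{B}^d_S\to S$ over a neighborhood of $x$, hence cut out by the regular sequence $T_1-a_1,\dots,T_d-a_d$ for suitable functions $a_i$, so it is an lci immersion; by (1) its base change along the (\'etale, hence flat) morphism $g$ is again an lci immersion, and since a section of an \'etale morphism is an open immersion, $s$ factors as an open immersion into that base change — which forces $s$ itself to be lci near $\xi$. The main obstacle is marshalling the structure theory of smooth morphisms and checking the Cartesianness of the relevant square after these localizations; alternatively, if one adopts the cotangent-complex characterization of lci immersions, (2) follows at once from the transitivity triangle for $S\xrightarrow{s}X\xrightarrow{f}S$, since $L_{S/S} = 0$ gives $L_{S/X}\simeq (s^{*}\Omega_{X/S})[1]$ with $\Omega_{X/S}$ locally free, once one knows (as above) that $s$ is a closed immersion.
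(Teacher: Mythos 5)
Your proposal is correct, and parts (1) and (3) follow essentially the paper's own route: (1) is exactly the reduction to affinoids plus the fact that a regular sequence stays regular under a flat ring map ([Lemma B.4.3]{quotients} and \cite[\href{https://stacks.math.columbia.edu/tag/00LM}{Tag 00LM}]{stacks-project}), and (3) rests, as in Corollary~\ref{cor:lci-relative-analytification}, on flatness of $c_{X/S}$ (Lemma~\ref{lemma:flatness}) and the resulting identification $c_{X/S}^*\cal{I}_Z \simeq \cal{I}_{Z^{\an/S}}$ --- the paper merely packages the last step differently, reducing by induction to the Cartier-divisor case and invoking invertibility of the ideal sheaf rather than transporting the regular sequence directly. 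For (2) you take a genuinely different route. The paper proves the stronger Lemma~\ref{lemma:local-structure-smooth-smooth}: for \emph{any} Zariski-closed immersion $i\colon X\to Y$ between smooth $S$-spaces of pure dimensions, one uses \cite[Prop.\,1.6.9(ii)]{H3} to choose generators of the ideal that extend to a basis of $\Omega^1_{Y/S}$, producing an \'etale chart $h\colon U_x\to \bf{D}^{d_Y}_S$ under which $i$ becomes the standard linear inclusion $\bf{D}^{d_X}_S\subset\bf{D}^{d_Y}_S$; the section case is then the special instance $X=S$. You instead treat only the section, factoring $f$ locally as \'etale over $\bf{D}^d_S$, observing that $g\circ s$ is cut out by the explicit regular sequence $T_i-a_i$, pulling back along the flat map $g$ via (1), and then noting that $s$ lands as an open (hence, being also closed in $U$, clopen) piece of that pullback, so that near $s(S)$ the two ideal sheaves agree after shrinking. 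This is a valid and somewhat more elementary argument --- it avoids the Jacobian-criterion bookkeeping --- but it buys less: it does not yield the paper's general statement that any closed immersion between smooth $S$-spaces of pure dimensions is lci of pure codimension $d_Y-d_X$, which the paper uses elsewhere. The only step you should spell out is the last one: from ``$s(S)$ is open in $Z:=g^{-1}(g(s(S)))$ and closed in $U$'' you need that around each point of $s(S)$ there is an open $V\subset U$ with $V\cap Z=V\cap s(S)$, so that the ideal of $s(S)$ coincides there with the ideal of $Z$; as stated, ``forces $s$ itself to be lci'' elides this. The cotangent-complex alternative you mention is not available in the paper's framework and should be regarded as heuristic here.
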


In order to achieve these results, we also recall some facts about coherent sheaves and relative analytifications in Sections~\ref{section:sheaves} and \ref{section:analytification}, respectively. \smallskip

The next big topic that we discuss is the relative analytic Proj construction. In particular, we provide the construction of Proj of a graded locally coherent $\O_S$-algebra (see Definition~\ref{defn:locally-coherent}) on any locally noetherian analytic adic space $S$. In particular, this allows us to define blow-ups and projective vector bundles in big generality; see Definition~\ref{defn:projective-bundle} and Definition~\ref{defn:blow-ups}. This theory has been worked out for rigid-analytic spaces in \cite{conrad-ample} by a different approach; however, the case of more general analytic adic spaces seems to be missing in the literature.

We refer to Definition~\ref{defn:proj-general} for the precise definition of our Proj construction and only (informally) summarize its main properties below:

\begin{thm}[Def.~\ref{defn:proj-general}, Rmk.~\ref{rmk:proj-base-change}, Def.~\ref{defn:projective-bundle}, Def.~\ref{defn:blow-ups} Lemma~\ref{lemma:proj-commutes-with-analytification}, Lemma~\ref{lemma:blow-up-analytification}, Thm.~\ref{thm:relative-picard-projective-line}]\label{thm:intro-2}
\begin{enumerate}
    \item There is a good notion of relative analytic Proj construction $\ud{\rm{Proj}}^{\an}_S\cal{A}_\bullet$ which commutes with arbitrary pullbacks and (relative) analytifications;
    \item There is a good notion of blow-ups and projectivized vector bundles on locally noetherian analytic adic spaces. Moreover, these notions commute with (relative) analytifications;
    \item\label{thm:intro-2-3} For any locally noetherian connected analytic adic space $S$, a vector bundle $\cal{E}$ on $S$ of rank at least $2$, and the corresponding projective bundle $P\coloneqq \bf{P}_S(\cal{E}) \to S$, the natural morphism
    \[
        \rm{Pic}\big(S\big) \bigoplus \Z \to \rm{Pic}\big(P\big)
    \]
    \[
        \cal{L} \oplus n \mapsto f^*\cal{L} \otimes_{\O_P} \O_{P/S}(n)
    \]
    is an isomorphism. 
\end{enumerate}
\end{thm}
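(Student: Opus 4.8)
The plan is to follow the classical strategy for computing the Picard group of a projective bundle over a scheme (as in \cite[Exercise II.7.9]{Hartshorne} or \cite[Tag 0BGF]{StacksProject}), carefully transporting each step into the setting of locally noetherian analytic adic spaces. First I would prove that the map is injective. For injectivity it suffices to construct a left inverse: restricting a line bundle along a section of $f\colon P \to S$ — which exists Zariski-locally on $S$ since $\mathbf{P}_S(\cal{E})$ is Zariski-locally $\mathbf{P}^{r-1}_S$ — recovers $\cal{L}$ from $f^*\cal{L}$, while the degree $n$ can be recovered by restricting to the fibers of $f$ over points of $S$, where one invokes the known computation $\mathrm{Pic}(\mathbf{P}^{r-1}_{k(s)}) \cong \Z$ for the residue ``field'' (Huber local ring) at a point $s \in S$; here one uses that the fibers of a projectivized vector bundle are honest projective spaces. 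Connectedness of $S$ is what forces the recovered integer $n$ to be globally well-defined (the locally constant function $s \mapsto n$ is constant).

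The substance is in the surjectivity. Given a line bundle $\M$ on $P$, I would first restrict to fibers: for each $s \in S$ there is an integer $n(s)$ with $\M|_{P_s} \cong \O_{P_s}(n(s))$, and I would argue that $n(s)$ is locally constant on $S$, hence constant equal to some $n$ by connectedness. Replacing $\M$ by $\M \otimes \O_{P/S}(-n)$, I may assume $\M|_{P_s} \cong \O_{P_s}$ for all $s$. The goal is then to show $\M \cong f^*\cal{L}$ for a line bundle $\cal{L}$ on $S$; the natural candidate is $\cal{L} \coloneqq f_*\M$. The key input is a cohomology-and-base-change / semicontinuity statement for the proper morphism $f$: since $P \to S$ is proper (being Zariski-locally $\mathbf{P}^{r-1}_S$) and $\M$ is a coherent $\O_P$-module, the pushforwards $R^i f_* \M$ are coherent $\O_S$-modules (using the coherence results recalled in Section~\ref{section:sheaves} and properness of analytic $\mathrm{Proj}$), and the formation of $f_*\M$ commutes with base change on $S$ because $h^0(P_s, \M|_{P_s}) = 1$ is constant. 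This gives that $f_*\M$ is a line bundle on $S$ and that the adjunction counit $f^* f_* \M \to \M$ is, fiberwise, the isomorphism $\O_{P_s} \xrightarrow{\sim} \M|_{P_s}$; by Nakayama it is then an isomorphism, so $\M \cong f^* \cal{L}$ with $\cal{L} = f_*\M$, completing the proof.

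The main obstacle is the cohomology-and-base-change machinery for proper morphisms of locally noetherian analytic adic spaces: one needs coherence of higher direct images under proper morphisms and the Grauert-type base-change theorems (``$h^0$ constant $\Rightarrow$ $f_*$ is a bundle commuting with base change''). In the rigid-analytic case these are available through Kiehl's finiteness theorem and its descendants, but for general analytic adic spaces I would need to either invoke or establish the analogous statements — most efficiently by reducing to the algebraic case Zariski-locally, since $P \to S$ is locally $\mathbf{P}^{r-1}_S$ and one can compare $R^if_*\M$ with the algebraic $R^if_*$ via the relative analytification comparison from Section~\ref{section:analytification} (the relevant projective morphism being algebraizable locally on the base). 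A secondary technical point is checking that the fiberwise computation $\mathrm{Pic}(\mathbf{P}^{r-1}_{k(s)}) \cong \Z$ applies to the adic fibers, i.e.\ that $\mathbf{P}_S(\cal{E})_s$ is genuinely a projective space over the (possibly non-discretely-valued, but noetherian) Huber residue ring at $s$; this follows from the base-change compatibility of $\mathbf{P}_S(\cal{E})$ asserted in part (1) of the theorem together with the elementary computation of $\mathrm{Pic}$ of projective space over an affinoid point.
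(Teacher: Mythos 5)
Your overall strategy for the substantive direction (surjectivity) is exactly the paper's: classify $\cal{N}$ on each fiber $\bf{P}_x(\cal{E}) = \bf{P}_S(\cal{E}) \times_S \Spa(K(x), K(x)^+)$ using $\rm{Pic}\big(\bf{P}^d_{\Spa(K,K^+)}\big) \cong \Z$, show the resulting integer is locally constant (Corollary~\ref{cor:locally-constant-n-general}, whose proof is itself the base-change statement you identify as the crux), use connectedness to reduce to the fiberwise-trivial case by twisting by $\O_{P/S}(-n)$, and then prove that $f_*\cal{N}$ is a line bundle with $f^*f_*\cal{N} \xr{\sim} \cal{N}$. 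The ``main obstacle'' you isolate --- a Grauert-type cohomology-and-base-change input for proper morphisms of adic spaces --- is resolved in the paper by precisely the route you call most efficient: since $\bf{P}_S(\cal{E})$ is Zariski-locally $\bf{P}^d_S$ over an affinoid base $\Spa(A,A^+)$, the Fujiwara--Kato relative GAGA theorems transport both $\rm{Pic}$ and the pushforward computations to $\bf{P}^{d,\rm{alg}}_A \to \Spec A$, where the statements are classical (Lemmas~\ref{lemma:GAGA}, \ref{lemma:locally-constant-n}, \ref{lemma:coherent-proj-formula}); no analytic Kiehl-type finiteness is invoked directly. One small point the paper needs and you would too: connectedness of $\Spa(A,A^+)$ must be converted into connectedness of $\Spec A$ before quoting the algebraic result (Lemma~\ref{lemma:connected-different}).

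The one step that would fail as written is your injectivity argument. A section $\sigma_U$ of $f$ over an open $U \subset S$ gives a canonical identification $\sigma_U^* f^* \cal{L} \cong \cal{L}|_U$, so from an (arbitrary, non-canonical) isomorphism $f^*\cal{L} \cong \O_P$ you only deduce $\cal{L}|_U \cong \O_U$ for each such $U$ --- i.e., that $\cal{L}$ is locally trivial, which every line bundle is; the trivializations obtained from different local sections need not glue. Since $f$ has no global section in general, the ``left inverse via a section'' does not exist, and local sections carry no information. The fix is the one the paper uses, and which you already have available in your surjectivity step: once $n=0$ has been forced by restricting to a single fiber (Corollary~\ref{cor:case-of-a-field}), the projection-formula isomorphism $\cal{L} \xr{\sim} f_*f^*\cal{L}$ of Lemma~\ref{lemma:coherent-proj-formula} yields $\cal{L} \cong f_*\O_P \cong \O_S$. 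With that substitution your argument matches the paper's.
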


In order to prove Theorem~\ref{thm:intro-2}(\ref{thm:intro-2-3}), we need to establish certain basic facts about connected components of locally noetherian analytic adic spaces. We record all these facts in Section~\ref{section:components}. \smallskip

The next big topic of this paper is the construction of \'etale six functor formalism in the sense of \cite[Def.~2.3.10]{duality-revisited} (crucially based on \cite[Appendix A.5]{Lucas-thesis}) that extends the six functors constructed in \cite{H3}. \smallskip

\begin{thm}[Theorem~\ref{thm:etale-six-functors}]\label{thm:intro-3} Let $S$ be a locally noetherian analytic adic space, let $\cal{C}'$ be the category of locally $+$-weakly finite type $S$-adic spaces, and let $n>0$ be an integer {\it invertible} in $\O_S^+$. Then there is a $6$-functor formalism 
\[
\cal{D}_{\et}(-; \Z/n\Z) \colon \rm{Corr}(\cal{C'}) \to \Cat_\infty
\]
such that 
\begin{enumerate}
    \item there is a canonical isomorphism of symmetric monoidal $\infty$-categories $\cal{D}_\et(X;\Z/n\Z) = \cal{D}(X_\et; \Z/n\Z)$ for any $X\in \cal{C'}$;
    \item for a  morphism $f\colon X \to Y$ in $\cal{C}'$, we have 
    \[
    \cal{D}_\et\big([Y \xleftarrow{f} X \xr{\rm{id}} X]\big) = f^*\colon \cal{D}(Y_\et; \Z/n\Z) \to \cal{D}(X_\et; \Z/n\Z);
    \]
    \item for a separated, taut, locally $+$-weakly finite type morphism $f\colon X \to Y$, we have
    \[
    \cal{D}_{\et}\big([X \xleftarrow{\rm{id}} X \xr{f} Y]\big)|_{\cal{D}_\et^+(X_\et; \Z/n\Z)} \simeq \rm{R}^+f_! \colon \cal{D}^+_\et(X_\et; \Z/n\Z) \to \cal{D}_\et(Y_\et; \Z/n\Z),
    \]
    where $\rm{R}^+f_!$ is the functor from \cite[Thm.~5.4.3]{H3}.
\end{enumerate}
\end{thm}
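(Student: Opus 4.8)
The plan is to build $\cal{D}_\et(-;\Z/n\Z)$ by applying the general construction theorem for $6$-functor formalisms recorded in \cite{duality-revisited} (which axiomatizes \cite[Appendix A.5]{Lucas-thesis}) to Huber's étale cohomology of adic spaces. The first task is the ``$*$-part'': one checks that the assignment $X\mapsto \cal{D}(X_\et;\Z/n\Z)$, $f\mapsto f^*$, together with the derived tensor product $-\otimes_{\Z/n\Z}-$, upgrades to a lax symmetric monoidal functor $(\cal{C}')^{\mathrm{op}}\to \mathrm{CAlg}(\Pr)$. This rests only on Huber's foundations of the étale site of locally noetherian analytic adic spaces: functoriality of $X\mapsto X_\et$, the presentable symmetric monoidal structure on each $\cal{D}(X_\et;\Z/n\Z)$, and the fact that $f^*$ is symmetric monoidal and preserves colimits (so that it admits a right adjoint $f_*$). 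The coherence needed to promote these $1$-categorical facts to an $\infty$-functor is handled as in \cite{Lucas-thesis}.

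Next I would single out, inside the class $E$ of separated, taut, $+$-weakly finite type morphisms of $\cal{C}'$, two subclasses: the class $J$ of étale morphisms (in particular open immersions), for which $f^*$ additionally admits a \emph{left} adjoint $f_!\coloneqq f_\#$, and the class $P$ of proper (more generally, partially proper) morphisms, for which one \emph{declares} $f_!\coloneqq f_*$ (pushforward with proper supports). The construction theorem then glues these into a $6$-functor formalism $\cal{D}_\et\colon \rm{Corr}(\cal{C}')_{E,\mathrm{all}}\to\Cat_\infty$ once its hypotheses are verified, all of which are theorems of Huber: (i) $E$, $J$, $P$ contain all isomorphisms and are stable under composition and base change; (ii) every $f\in E$ is, locally on the source, a composite of a morphism in $J$ followed by one in $P$ — this is Huber's compactification theorem for taut morphisms, together with compatibility of the canonical compactification with base change; (iii) proper base change and the projection formula hold with $\Z/n\Z$-coefficients for morphisms in $P$, which is exactly where the hypothesis that $n$ is invertible in $\O_S^+$ enters; and (iv) the base-change square relating a morphism of $J$ and one of $P$ commutes. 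The $\infty$-categorical forms of (iii) and (iv) are deduced from Huber's classical statements by checking that the relevant base-change transformations are equivalences, which one tests on cohomology sheaves.

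Granting the construction, properties (1) and (2) hold essentially by design, since $\cal{D}_\et^*$ was defined to be $\cal{D}(-_\et;\Z/n\Z)$ with its pullback functoriality and tensor product. For (3), one unwinds the formula the machine assigns to $f_!$ for $f\in E$: choosing a factorization $f=p\circ j$ with $j\in J$ and $p\in P$, the formalism yields $f_!\simeq p_*\circ j_!$, glued over the factorization, and after restriction to $\cal{D}^+_\et$ this is precisely the definition of Huber's $\rm{R}^+f_!$ in \cite[Thm.~5.4.3]{H3}; since independence of the chosen compactification holds on both sides, what remains is the routine matching of the $\infty$-categorical $f_!$ with Huber's derived functor and of the two gluing procedures. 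I expect the genuine obstacle to lie in (ii) and (iii): confirming that Huber's compactification and proper base change theorems are available in exactly the generality of $E$ — arbitrary locally noetherian analytic adic spaces and separated taut $+$-weakly finite type morphisms between them — and with the base-change compatibilities of compactifications needed to run the gluing; by contrast, the abstract formalism is a pure black box once these inputs are in place.
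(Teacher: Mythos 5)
Your overall strategy coincides with the paper's: feed Huber's \'etale cohomology into the gluing machinery of \cite[Appendix A.5]{Lucas-thesis} via the suitable decomposition of the class $E$ of separated, taut, $+$-weakly finite type morphisms into (quasi-compact) open immersions and proper morphisms, using \cite[Cor.~5.1.6]{H3} for the compactification, and then compare the resulting $f_!$ with Huber's $\rm{R}^+f_!$. The substantive gap is in your step (iii). You propose to deduce the $\infty$-categorical proper base change and projection formula from Huber's statements by ``testing on cohomology sheaves,'' but Huber only proves these for bounded-below complexes, and the theorem is precisely about the \emph{unbounded} derived categories. Passing from $\cal{D}^+$ to $\cal{D}$ requires knowing that $\rm{R}f_*$ has finite cohomological dimension for $+$-weakly finite type $f$ over a quasi-compact base (Lemma~\ref{lemma:properties-proper-morphisms}), which is also what makes $\rm{R}f_*$ commute with colimits (needed for the existence of $f^!$ and for the colimit arguments in the projection formula). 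This is not a formal consequence of anything in \cite{H3}: it rests on the finiteness of the relative dimension of weakly finite type morphisms (Lemma~\ref{lemma:+-weakly-finite-type-finite-dimensional}), which the paper has to establish in Section~\ref{section:dimension} by reducing $+$-weakly finite type morphisms to finite type ones; the paper explicitly notes this finiteness is missing from the literature. Your proposal treats as quotable the one input that must actually be proved.

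Two smaller omissions. The theorem asserts a formalism on all of $\rm{Corr}(\cal{C}')$, i.e.\ for arbitrary locally $+$-weakly finite type morphisms with no separatedness or tautness hypotheses, whereas your construction stops at $\rm{Corr}(\cal{C}')_{E,\rm{all}}$; the further extension by descent along disjoint unions and open covers (\cite[Prop.\,A.5.12, A.5.14]{Lucas-thesis}) is formal but is where your ``locally on the source'' factorization actually lives --- the machinery for $E$ itself needs a global compactification. And for part (3), Huber's $\rm{R}^+f_!$ for partially proper $f$ is the derived functor of sections with proper support, not a composite $p_*\circ j_!$, so the comparison in that case is not ``routine matching'': it requires identifying both sides with the filtered colimit $\colim_U (f|_U)_!(-|_U)$ over quasi-compact opens $U\subset X$, as in the paper's Step~5.
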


We note that Theorem~\ref{thm:intro-3} constructs the $\rm{R}f_!$-functor on the unbounded derived categories for an arbitrary locally $+$-weakly finite type morphism of locally noetherian analytic adic spaces. In particular, no separatedness, tautness, or boundedness assumptions are necessary. \smallskip

We note that \cite[Appendix A.5]{Lucas-thesis} provides exceptionally convenient and useful machinery for the purpose of constructing six functor formalisms. Using {\it loc.~cit.} the main content of Theorem~\ref{thm:intro-3} essentially boils down to verifying the unbounded version of proper base change and projection formula. In order to do this, we need to recall some facts from the dimension theory on adic spaces. We do this in Section~\ref{section:dimension}. \smallskip

Finally, we provide categorical descriptions of (the derived category of) lisse and constructible sheaves on noetherian analytic adic spaces and schemes.

\begin{thm}[Lemma~\ref{lemma:lisse-categorical}, Lemma~\ref{lemma:abstract-nonsense}]\label{thm:intro-4} Let $X$ be a locally noetherian analytic adic space or a scheme, and let $n>0$ be an integer.
    \begin{enumerate}
        \item The following are equivalent:
        \begin{itemize}
            \item $\F\in \cal{D}(X_\et; \Z/n\Z)$ is dualizable;
            \item $\F\in \cal{D}(X_\et; \Z/n\Z)$ is perfect;
            \item $\F$ lies in $\cal{D}^{(b)}_{\rm{lisse}}(X_\et; \Z/n\Z)$ and, for each geometric point $\ov{s} \to X$, the stalk $\F_{\ov{s}}$ is a perfect complex in $\cal{D}(\Z/n\Z)$.
        \end{itemize}
        \item Assume that $X$ is also quasi-compact and quasi-separated, and let $N$ be an integer. Then an object $\F\in \cal{D}^{\geq -N}(X_\et; \Z/n\Z)$ is compact if and only if $\F$ lies in $\cal{D}^{b, \geq -N}_{\rm{cons}}(X_\et; \Z/n\Z)$, i.e., $\F$ is bounded and all its cohomology sheaves are constructible.
    \end{enumerate}
\end{thm}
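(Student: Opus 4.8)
The two parts have rather different flavors, so I would treat them separately. For part (1), the plan is to establish the chain of implications "dualizable $\Leftrightarrow$ perfect" first, and then characterize perfect objects via the lisse condition plus perfectness of stalks. The equivalence of dualizable and perfect is essentially a formal statement about the symmetric monoidal $\infty$-category $\cal{D}(X_\et;\Z/n\Z)$: one direction uses that perfect complexes (thick subcategory generated by the unit) are always dualizable in a closed symmetric monoidal stable $\infty$-category, and the converse uses that dualizability is local on $X_\et$ (it can be checked on stalks, or at least on a suitable cover) together with the fact that over a point the heart is modules over the (derived) ring $\Z/n\Z$, where dualizable is the same as perfect. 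The last equivalence — relating perfectness to being locally constant with perfect stalks — is where I would invoke the structure theory of $\cal{D}_{\rm{lisse}}$: a dualizable object, being locally (on the pro-étale or étale site) a finite limit/colimit of copies of the constant sheaf, is in particular in $\cal{D}^{(b)}_{\rm{lisse}}$, and the stalk condition is then automatic from dualizability at the level of $\cal{D}(\Z/n\Z)$; conversely an object that is lisse with perfect stalks is étale-locally a perfect complex of constant sheaves, hence dualizable.

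For part (2), the plan is a standard compact-generation argument. First I would record that for $X$ qcqs the $\infty$-category $\cal{D}^{\geq -N}(X_\et;\Z/n\Z)$ (equivalently, one works with the hypercomplete or the ordinary étale topos — I would pin down which convention is in force from the earlier sections) is compactly generated, with compact generators given by $j_!\Z/n\Z$ for $j\colon U \to X$ étale with $U$ qcqs, truncated appropriately. One then shows: (a) each such generator is bounded with constructible cohomology sheaves, hence lies in $\cal{D}^{b,\geq -N}_{\rm{cons}}$; (b) the subcategory $\cal{D}^{b,\geq -N}_{\rm{cons}}$ is the thick subcategory generated by these objects — this is the usual dévissage, writing a constructible sheaf as built out of $j_!$ of locally constant constructible sheaves along a stratification, and then further reducing locally constant constructible sheaves to the generators using part (1) or a direct Noetherian-induction argument; (c) conversely, any compact object lies in this thick subcategory, since the compact objects of a compactly generated category coincide with the thick subcategory generated by a set of compact generators (Neeman/Thomason). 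The truncation bound $\geq -N$ is handled throughout by the $t$-structure, noting $\cal{D}^{\geq -N}$ is itself compactly generated and the relevant generators can be taken $N$-truncated.

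The main obstacle, I expect, is part (b) of the part (2) argument — the dévissage showing that bounded constructible complexes form exactly the thick subcategory generated by the chosen compact generators. In the schematic Noetherian case this is classical (cf. the treatment of constructible sheaves via stratifications), but in the analytic adic setting one must be careful that the relevant finiteness properties hold: that a constructible sheaf admits a finite stratification into locally constant pieces with suitable (finite étale) trivializations, that such trivializations are refinable compatibly, and that the qcqs hypothesis is genuinely being used to make the stratification finite. I would handle this by reducing to the schematic statement where possible (via the comparison of étale topoi for affinoids, or the earlier six-functor formalism), and otherwise by a direct induction on the dimension of $X$ — which is why the paper recalls dimension theory for adic spaces in Section~\ref{section:dimension} — excising a closed subspace over which the sheaf becomes locally constant. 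A secondary subtlety is making sure the equivalence "dualizable $=$ perfect" in part (1) is used only where its hypotheses genuinely apply, and that the stalk condition is phrased for the correct notion of geometric point on an adic space; but these are bookkeeping issues rather than real obstacles.
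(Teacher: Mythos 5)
Part (1) of your proposal is correct and follows essentially the same route as the paper: dualizable $\Leftrightarrow$ perfect is the general categorical statement, and the equivalence with ``lisse with perfect stalks'' is obtained by locally trivializing $\F$ as a constant complex $\ud{M^\bullet}$ of finite $\Z/n\Z$-modules and reading off perfectness of $M^\bullet$ from the stalks.

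Part (2) has a genuine gap in your step (b), and it is not a bookkeeping issue: the claim that $\cal{D}^{b,\geq -N}_{\rm{cons}}(X_\et;\Z/n\Z)$ is the thick subcategory generated by the objects $j_!\ud{\Z/n\Z}$ is false whenever $\Z/n\Z$ is not semisimple. Already for $X$ a geometric point and $n=p^2$, the constant sheaf $\ud{\Z/p\Z}$ is constructible but is not a retract of a bounded complex of finite free $\Z/p^2\Z$-modules (it has non-vanishing $\rm{Tor}^{\Z/p^2\Z}_i$ against itself in all degrees), i.e.\ it is not perfect and hence not in the thick subcategory generated by the unit. This is also exactly why your appeal to part (1) to handle the locally constant pieces fails: lisse constructible is strictly weaker than perfect. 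Relatedly, the Neeman--Thomason identification of compacts with the thick closure of the generators is a statement about stable (triangulated) categories, while $\cal{D}^{\geq -N}$ is not stable; the coconnectivity bound is not cosmetic --- $\ud{\Z/p\Z}$ is compact in $\cal{D}^{\geq -N}(\Z/p^2\Z)$ but not in $\cal{D}(\Z/p^2\Z)$ --- so no identification of the compacts of $\cal{D}^{\geq -N}$ with a thick subcategory of the ambient stable category can be correct. The paper avoids the dévissage entirely. For ``constructible $\Rightarrow$ compact'' it resolves a constructible sheaf by a possibly infinite complex $\cdots \to f_{1,!}\ud{\Z/n\Z} \to f_{0,!}\ud{\Z/n\Z} \to \F \to 0$ and uses the Ext spectral sequence, which converges against coconnective targets; this only needs each $\rm{Ext}^i(\F,-)$ to commute with filtered colimits, not that $\F$ be built from the generators in finitely many steps. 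For ``compact $\Rightarrow$ constructible'' it shows $\cal{D}^{\geq -N}(X_\et;\Z/n\Z)\simeq \rm{Ind}\big(\cal{D}^{b,\geq -N}_{\rm{cons}}(X_\et;\Z/n\Z)\big)$, using only that every sheaf is a filtered colimit of constructible ones; a compact object is then a retract of a bounded constructible complex, hence itself bounded constructible. No stratification argument and no dimension induction are needed.
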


Theorem~\ref{thm:intro-4} is a very handy tool in getting certain finiteness statements from Poincar\'e Duality for free (see \cite[Application 1.3.4]{duality-revisited}).

\subsection{Terminology}

We say that an analytic adic space $X$ is {\it locally noetherian} if there is an open covering by affinoids $X= \bigcup_{i\in I} \Spa(A_i, A_i^+)$ with strongly noetherian Tate $A_i$. Sometimes, such spaces are called locally {\it strongly} noetherian.\smallskip

We follow \cite[Def.\,1.3.3]{H3} for the definition of a locally finite type, locally weakly finite type, and locally $+$-weakly finite type morphisms of locally noetherian analytic adic spaces.  

For a Grothendieck abelian category $\cal{A}$, we denote by $D(\cal{A})$ its \emph{triangulated derived category} and by $\cal{D}(\cal{A})$ its $\infty$-enhancement. \smallskip

For a locally noetherian analytic adic space $X$ and an integer $n>0$, we denote by $\cal{D}^{(b)}(X_\et; \Z/n\Z)$ the full subcategory of $\cal{D}(X_\et; \Z/n\Z)$ consisting of {\it locally} bounded complexes. 

\subsection{Acknowledgements}

The author thanks the anonymous referee and Sean Howe for their valuable comments on the previous draft of this paper. The author is also grateful to the Institute for Advanced Study and Princeton University for funding and excellent working conditions.

\section{Connected Components}\label{section:components}

In this section, we study connected components of locally noetherian analytic adic spaces.

\begin{lemma}\label{lemma:connected-different} Let $X=\Spa(A, A^+)$ be a strongly noetherian Tate affinoid. Then $X$ is connected if and only if $\Spec A$ is connected.
\end{lemma}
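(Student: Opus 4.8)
The plan is to show that each of the two conditions is equivalent to $A$ having no idempotent other than $0$ and $1$. For $\Spec A$ this is classical: idempotents of a ring are in bijection with its clopen subsets, a nontrivial idempotent $e$ giving the decomposition $\Spec A = \Spec eA \sqcup \Spec(1-e)A$. So the real content is the analogous statement on the adic side: decompositions $X = U \sqcup V$ of $X = \Spa(A, A^+)$ into two disjoint open subsets (necessarily clopen) are in bijection with idempotents $e \in A = \O_X(X)$.

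To produce an idempotent from such a decomposition I would invoke sheafiness: since $A$ is strongly noetherian Tate, $\O_X$ is a sheaf (Huber), so $A = \O_X(X) \cong \O_X(U) \times \O_X(V)$, and the preimage $e_U \in A$ of $(1, 0)$ is an idempotent. Conversely, from an idempotent $e = e^2 \in A$ I would form $U_e := \{x \in X : v_x(e) \neq 0\}$; since $v_x(e)^2 = v_x(e)$ we have $v_x(e) \in \{0, 1\}$ for every $x$, so $U_e$ coincides with the rational subset $\{x : v_x(1) \le v_x(e),\ v_x(e) \neq 0\}$ and is open, and likewise for $U_{1-e}$ (note $1-e$ is again idempotent); using $1 = e + (1-e)$ and $e(1-e) = 0$ one checks $X = U_e \sqcup U_{1-e}$. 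That the two constructions are mutually inverse follows from two local computations: on $U_e$ the ring $\O_X(U_e) \cong eA$ contains $e$ as a unit that is also idempotent, hence $e|_{U_e} = 1$ and $e|_{U_{1-e}} = 0$, which by the sheaf property pins down $e$ from the pair $(U_e, U_{1-e})$; and for $X = U \sqcup V$ one has $v_x(e_U) = v_x(1) = 1$ for $x \in U$ and $v_x(e_U) = v_x(0) = 0$ for $x \in V$, so $U_{e_U} = U$.

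Granting this dictionary, $X$ is connected iff $A$ has no nontrivial idempotent iff $\Spec A$ is connected; the degenerate case $A = 0$ (where $X = \emptyset = \Spec A$) is consistent with either convention for the empty space. The only non-formal input is that $\O_X$ is a sheaf, which is exactly where the strong noetherianity hypothesis is used; the rest is bookkeeping with rational subsets and idempotents. The step that needs the most care is checking that the idempotent $e_U$ extracted from an abstract clopen decomposition really cuts out that decomposition, i.e. $U_{e_U} = U$, and, in the converse direction, that both $U_e$ and $U_{1-e}$ are nonempty when $e \neq 0, 1$; the latter holds because $\O_X(U_e) \cong eA \neq 0$ while $\O_X(\emptyset) = 0$.
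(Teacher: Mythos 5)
Your proof is correct and follows exactly the paper's argument: both conditions are reduced to the nonexistence of nontrivial idempotents in $A$, with sheafiness of $\Spa(A,A^+)$ as the crucial input. The paper states this in one line; you have simply supplied the details of the dictionary between idempotents and clopen decompositions.
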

\begin{proof}
    Both connectivity of $\Spa(A, A^+)$ and of $\Spec A$ are equivalent to the fact that $A$ does not admit any non-trivial idempotents\footnote{Here, we crucially use that $\Spa(A, A^+)$ is sheafy.}. In particular, they are equivalent to each other.
\end{proof}

\begin{lemma}\label{lemma:locally-connected} Let $X$ be a locally noetherian analytic adic space. Then any point $x\in X$ admits a fundamental system of connected affinoid open neighborhoods. In particular, $X$ is locally connected. 
\end{lemma}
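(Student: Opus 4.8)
The plan is to reduce the statement to a purely algebraic fact about strongly noetherian Tate rings, using Lemma~\ref{lemma:connected-different} as the bridge between the adic-space picture and the scheme picture. Fix a point $x \in X$ and an affinoid open neighborhood $U = \Spa(A, A^+) \ni x$ with $A$ strongly noetherian Tate; it suffices to produce, inside any such $U$, a connected affinoid open neighborhood of $x$, since then these form a fundamental system as $U$ shrinks. By Lemma~\ref{lemma:connected-different}, a rational subset $V = \Spa(B, B^+) \subseteq U$ is connected if and only if $\Spec B$ is connected, i.e. if and only if $B$ has no nontrivial idempotents.

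First I would recall the structure of rational neighborhoods of $x$: the point $x$ corresponds to a continuous valuation on $A$, with support prime ideal $\p_x = \{f \in A : |f(x)| = 0\}$. Since $A$ is Tate, $x$ lies in some rational subset of the form $U\left(\tfrac{f_1, \dots, f_n}{f_0}\right)$ with $f_0, \dots, f_n$ generating the unit ideal and $|f_i(x)| \le |f_0(x)| \ne 0$; shrinking $U$, I may assume $x$ lies in a Laurent-type chain of such subsets, and in fact the rational subsets containing $x$ form a cofinal system of its open neighborhoods. The key point is then: for a rational subset $V = \Spa(B, B^+)$ of $U$ containing $x$, the image of $\Spec B \to \Spec A$ contains $\p_x$, and conversely $x$ has a cofinal system of such $V$ whose associated $\Spec B$ is "concentrated near $\p_x$."

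The main technical step — and the place I expect the real obstacle — is to show that $\Spec B$ can be made connected by shrinking $V$ around $x$. The clean way is to argue by a limit/approximation argument: the rings $B$ attached to rational neighborhoods of $x$ form a filtered system whose colimit is the local ring $\O_{X,x}$ (or its analogue — the stalk at $x$), which is local and hence has no nontrivial idempotents. Since each $B$ is noetherian (strongly noetherian Tate), the set of idempotents of $B$ is finite, and an idempotent $e \in B$ that becomes $0$ in the colimit already becomes $0$ in some further rational subset $V' \subseteq V$ containing $x$. Thus there is a rational neighborhood $V_0 \ni x$ in which all "spurious" idempotents die; concretely one picks, for each of the finitely many connected components of $\Spec B$ not containing $\p_x$, a function vanishing there and nonzero at $x$, and passes to the rational subset where that function is invertible — this is a finite procedure and lands in a $B_0$ with $\Spec B_0$ connected, hence $V_0$ connected by Lemma~\ref{lemma:connected-different}.

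Finally I would assemble: the $V_0$ produced above is a connected affinoid open neighborhood of $x$ contained in the arbitrary starting neighborhood $U$, so such neighborhoods are cofinal, proving the first assertion; local connectedness of $X$ is then immediate since every point has a neighborhood basis of connected opens. The one subtlety to handle carefully is the identification of the filtered colimit of the $B$'s over rational neighborhoods of $x$ with a local ring (this uses that $A$ is Tate and sheafy, and is where strong noetherianity enters to keep everything noetherian along the system); I would cite or prove that $\colim_{V \ni x} B_V$ is local with residue characteristic governed by $\p_x$, which reduces the whole argument to the finiteness of idempotents in a noetherian ring together with the standard fact that idempotents can be detected at finite level in a filtered colimit of rings.
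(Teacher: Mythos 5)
Your argument is correct in substance and ultimately rests on the same two inputs as the paper's proof: Lemma~\ref{lemma:connected-different} and the fact that a noetherian ring has only finitely many idempotents. The paper, however, gets there in one step: inside any affinoid neighborhood $U=\Spa(A,A^+)$ of $x$ the ring $A$ has finitely many mutually orthogonal primitive idempotents, so $U$ has finitely many connected components, each of which is therefore clopen; the component of $x$ is then itself a connected affinoid open neighborhood of $x$ inside $U$, and these form the desired fundamental system. Your detour through the stalk and the filtered colimit over rational neighborhoods is not needed, and as literally stated it does not close the argument by itself: passing to a smaller rational $V'\ni x$ in which the idempotents of $B$ die does not prevent $B_{V'}$ from acquiring \emph{new} idempotents, since a rational subset of a connected affinoid can be disconnected (e.g.\ $\{\,|T^2-T|\le |p|\,\}\subset \Spa(\Q_p\langle T\rangle,\Z_p\langle T\rangle)$ is the disjoint union of the two discs of radius $|p|$ around $0$ and $1$), so the shrinking process is not obviously finite. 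What actually saves your argument is the concrete recipe at the end, provided the ``function vanishing on the other components and nonzero at $x$'' is taken to be the relevant idempotent $e$ (for a general such function the locus of invertibility is neither rational nor connected in general): the rational subset $\{\,|e|\ge 1\,\}$ is precisely the clopen connected component of $x$, its ring is the direct factor $Be$, and a single step suffices --- at which point you have reproduced the paper's one-line argument.
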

\begin{proof}
    It suffices to show that, for any strongly noetherian Tate affinoid $X=\Spa(A, A^+)$ and a point $x\in X$, the connected component of $x$ is clopen. For this, note that the ring $A$ is noetherian, and so admits only a finite number of mutually orthogonal non-trivial idempotents. Therefore, $X$ has only a finite number of connected components, thus they all must be open and closed. 
\end{proof}

\begin{cor}\label{cor:finite-number-of-components} Let $X$ be a locally noetherian analytic adic space. Then each connected component of $X$ is closed and open. In particular, if $X$ is a noetherian analytic adic space, then $\pi_0(X)$ is finite. 
\end{cor}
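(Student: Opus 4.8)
The plan is to deduce Corollary~\ref{cor:finite-number-of-components} directly from Lemma~\ref{lemma:locally-connected}, using only formal point-set topology together with the observation that a locally noetherian analytic adic space is covered by affinoids with noetherian underlying ring. For the first assertion, fix a connected component $Z$ of $X$. Since $X$ is locally connected (Lemma~\ref{lemma:locally-connected}), every connected component of $X$ is open: indeed, each $x \in Z$ has a connected open neighborhood $U$, and since $Z$ is the maximal connected set containing $x$ we get $U \subseteq Z$, so $Z$ is open. Being a connected component, $Z$ is automatically closed (the connected components of any topological space partition it into closed sets), and since $X \setminus Z$ is a union of the other components — each of which is open by the same argument — $Z$ is also closed as the complement of an open set; in fact either observation alone suffices. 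So every connected component is clopen.

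For the second assertion, suppose $X$ is a noetherian analytic adic space, i.e., quasi-compact and covered by finitely many $\Spa(A_i, A_i^+)$ with $A_i$ strongly noetherian Tate. The plan is to reduce to counting components of each affinoid piece. By the proof of Lemma~\ref{lemma:locally-connected}, each $\Spa(A_i, A_i^+)$ has only finitely many connected components (since $A_i$ is noetherian, it has only finitely many mutually orthogonal nontrivial idempotents). Now $\pi_0(X)$ is a quotient of $\bigsqcup_i \pi_0(\Spa(A_i,A_i^+))$, a finite set: concretely, the clopen decomposition of $X$ into its components restricts on each affinoid $\Spa(A_i, A_i^+)$ to a decomposition into clopen pieces, each of which is a union of connected components of $\Spa(A_i, A_i^+)$; hence each component of $X$ meets at least one affinoid in a nonempty union of its finitely many components, and the number of components of $X$ is bounded by $\sum_i \#\pi_0(\Spa(A_i,A_i^+)) < \infty$. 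Therefore $\pi_0(X)$ is finite.

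I do not expect any serious obstacle here: the statement is essentially the formal consequence of local connectedness plus quasi-compactness, and the only input beyond point-set topology — that each strongly noetherian Tate affinoid has finitely many connected components — has already been extracted in the proof of Lemma~\ref{lemma:locally-connected}. The one point that deserves a word of care is making sure the clopen components of $X$ really do restrict to unions of components on each affinoid; this is automatic since a clopen subset of $X$ intersected with an open affinoid is clopen there, and a clopen set is always a union of connected components. Alternatively, one can phrase the finiteness via the surjection $\bigsqcup_i \pi_0(U_i) \twoheadrightarrow \pi_0(X)$ induced by a finite affinoid cover $\{U_i\}$, which is surjective because each point of $X$ lies in some $U_i$ and the image of a connected set is connected.
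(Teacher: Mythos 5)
Your proof is correct and follows essentially the same route as the paper: openness of components comes from local connectedness (Lemma~\ref{lemma:locally-connected}), and closedness is the standard fact about connected components. For the finiteness of $\pi_0(X)$, which the paper leaves implicit, your counting argument via the surjection $\bigsqcup_i \pi_0\big(\Spa(A_i,A_i^+)\big) \twoheadrightarrow \pi_0(X)$ from a finite affinoid cover is valid (one could equally invoke quasi-compactness against the open partition of $X$ by its components).
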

\begin{proof}
    Connected components are always closed (see \cite[\href{https://stacks.math.columbia.edu/tag/004T}{Tag 004T}]{stacks-project}), so it suffices to show that they are open. This follows from  \cite[\href{https://stacks.math.columbia.edu/tag/04ME}{Tag 04ME}]{stacks-project} and Lemma~\ref{lemma:locally-connected}. \smallskip
\end{proof}

\section{Dimension}\label{section:dimension}

In this section, we study different possible definitions of dimension in non-archimedean geometry.  \smallskip

\begin{defn}\label{defn:dimension}(\cite[Def.\,1.8.1]{H3}) The {\it dimension} of a locally spectral $X$ is the supremum of the length $d$ of the chains of specializations $x_0\succ x_1 \succ \dots \succ x_d$ of points of $X$.

A locally spectral space $X$ is of {\it pure dimension $d$} if every non-empty open subset $U\subset X$ has dimension $d$.

The {\it (relative) dimension} $\dim f$ of a morphism of analytic adic spaces $f\colon X \to Y$ is the supremum of the dimensions of the fibers of $f$,
\[
\dim f \coloneqq \sup_{y\in Y} \dim f^{-1}(y)\in \bf{Z}_{\geq 0} \cup \{\infty\}.
\]
A morphism $f\colon X \to Y$ is of {\it relative pure dimension $d$} if all non-empty fibers $f^{-1}(y)$ are of pure dimension $d$.
\end{defn}

Firstly, it turns out that one can only consider fibers over rank-$1$ points. 

\begin{lemma}\label{lemma:only-rank-1}(\cite[Cor.\,1.8.7]{H3}) Let $S$ be a locally noetherian analytic adic space, and let $f\colon X \to S$ be a locally finite type morphism. Then $f$ is of relative pure dimension $d$ if and only if, for each rank-$1$ point $s\in S$, the fiber
\[
X_s\coloneqq X\times_S \Spa\left(K(s), \O_{K(s)}\right)
\]
is either empty or of pure dimension $d$.
\end{lemma}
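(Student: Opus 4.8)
The forward implication is immediate, since a rank-$1$ point is in particular a point of $S$; one only records in addition that, for such an $s$, the base change $X_s=X\times_S\Spa(K(s),\O_{K(s)})$ appearing in the statement has the same dimension and the same pure-dimensionality as the topological fibre $f^{-1}(s)$ used in Definition~\ref{defn:dimension} --- a comparison between topological fibres and fibre products that is part of the dimension theory of \cite[Section 1.8]{H3}. So assume conversely that $f^{-1}(s)$ is empty or of pure dimension $d$ for every rank-$1$ point $s\in S$, and fix an arbitrary $s\in S$.

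Since $S$ is analytic, the valuation ring $K(s)^+\subseteq K(s)$ is microbial; let $s_0$ be the rank-$1$ generization of $s$ obtained from the height-$1$ coarsening of the valuation $s$, so that $s\in\ov{\{s_0\}}$, the underlying field $K(s_0)$ coincides with $K(s)$, and $K(s_0)^+=\O_{K(s)}$ is the rank-$1$ coarsening of $K(s)^+$. The generization $s_0\leadsto s$ is \emph{vertical}: it does not change the support of the valuation. I claim that (a) $f^{-1}(s)$ is non-empty if and only if $f^{-1}(s_0)$ is, and (b) when non-empty, $f^{-1}(s)$ is of pure dimension $\delta$ if and only if $f^{-1}(s_0)$ is of pure dimension $\delta$ (same $\delta$). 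Granting (a) and (b), the hypothesis applied to the rank-$1$ point $s_0$ forces $f^{-1}(s)$ to be empty or of pure dimension $d$, which is exactly what is needed; since $s$ was arbitrary, $f$ has relative pure dimension $d$.

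It remains to establish (a) and (b), and this is the heart of the matter. Both are assertions that are insensitive to a vertical change of base point, i.e.\ to enlarging the valuation ring of a fixed residue field. For (a): $f^{-1}(s)$ is non-empty exactly when the support of $s$ lies in the image of $f$, a condition unaffected by coarsening the valuation. For (b): one works affinoid-locally and travels along the finite chain $\Spa(K(s),K(s)^+)$, whose generic (rank-$1$) point maps to $s_0$ and whose closed point maps to $s$; along each of the intermediate vertical specializations the dimension and the pure-dimensionality of the topological fibre of $f$ stay constant, because the fibre is cut out by an algebra locally of finite type over the fixed field $K(s)$ and its dimension-theoretic behaviour does not see the valuation ring. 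This constancy is precisely the content of Huber's dimension theory for locally noetherian analytic adic spaces, and the present lemma is a clean repackaging of it; accordingly, the verification of (b) reduces to --- and is carried out in --- \cite[Section 1.8]{H3}. I expect this last point, the careful comparison of $f^{-1}(s)$, its pure-dimensionality, and $f^{-1}(s_0)$ across the chain of vertical specializations in $S$, to be the main obstacle.
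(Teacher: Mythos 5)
The paper offers no proof of this lemma at all: it is quoted directly as \cite[Cor.\,1.8.7]{H3}. Your proposal, which ultimately defers both the comparison of the topological fibre $f^{-1}(s)$ with the fibre product $X_s$ and the substantive step (b) to Huber's dimension theory in [H3, \S 1.8], therefore lands in essentially the same place as the paper; the reduction you outline (replace an arbitrary point $s$ by its maximal vertical generization $s_0$, which is the rank-$1$ point with the same completed residue field and with $K(s_0)^+=\O_{K(s)}$) is indeed the right picture of how that section of Huber is organized.

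One genuine inaccuracy: claim (a) is false as a biconditional, and your justification for it is wrong. The image of a locally finite type morphism need not be stable under vertical specialization --- for instance the open immersion $\Spa(K,\O_K)\hookrightarrow\Spa(K,K^+)$ for a proper open bounded valuation subring $K^+\subsetneq\O_K$ is locally of finite type, yet its image is only the rank-$1$ generic point, so $f^{-1}(s_0)\neq\emptyset$ while $f^{-1}(s)=\emptyset$ for the closed point $s$. Thus ``non-emptiness of the fibre is unaffected by coarsening the valuation'' is not a correct statement. Fortunately your argument only needs the implication $f^{-1}(s)\neq\emptyset\Rightarrow f^{-1}(s_0)\neq\emptyset$, and that direction does hold: any $x\in f^{-1}(s)$ admits a maximal vertical generization $x_0$, which is a rank-$1$ point since $X$ is analytic; morphisms of analytic adic spaces send generizations to generizations and rank-$1$ points to rank-$1$ points, so $f(x_0)$ is a rank-$1$ generization of $s$ and hence equals $s_0$. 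You should replace (a) by this one-sided statement (which is all that is used), and the rest of the sketch is consistent with the source the paper itself cites.
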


It turns out that, in case of rigid-analytic spaces,  Definition~\ref{defn:dimension} recovers the usual notion of dimension:

\begin{lemma}\label{lemma:classical-dimension} Let $K$ be a non-archimedean field, and let $X$ be a rigid-analytic $K$-space. Then $X$ is of pure dimension $d$ if and only if, for each classical point $x\in X$, $\dim \O_{X, x}=d$.
\end{lemma}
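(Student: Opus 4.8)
The plan is to reduce the statement to a comparison between the dimension of a locally spectral space (in the sense of Definition~\ref{defn:dimension}) and the Krull dimensions of the local rings $\O_{X,x}$ at classical points. First I would note that both conditions are local on $X$, so we may assume $X = \Spa(A, A^+)$ is an affinoid rigid space with $A$ a (strongly noetherian) Tate $K$-affinoid algebra; moreover it suffices to relate $\dim X$ (as a spectral space) to $\sup_x \dim \O_{X,x}$ over classical points $x$, and then to treat the ``pure'' refinement by running the same argument on every rational (or affinoid) open subset $U \subseteq X$, using that such $U$ is again affinoid of the same type.

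Next, the key point is to identify the spectral-space dimension of $\Spa(A,A^+)$ with the Krull dimension of $A$. The maximal chains of specializations in $\Spa(A, A^+)$ are controlled by the valuation-theoretic structure: a chain of specializations $x_0 \succ x_1 \succ \cdots \succ x_d$ projects to a chain of primes in $A$ under the support map, and conversely one can build chains of specializations refining a chain of primes by adding ``vertical'' specializations coming from the value group (rank increases). For a Tate affinoid over a rank-$1$ field $K$, I would invoke the fact (essentially in \cite{H3}, around the cited Definition~1.8.1 and the results following it, or deducible from Huber's book) that $\dim \Spa(A, A^+) = \dim A$ where $\dim A$ is the Krull dimension of $A$. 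Then I would use the classical commutative-algebra fact that for a Tate affinoid algebra $A$ over $K$, $\dim A = \sup_{\m} \dim A_\m$ where $\m$ ranges over maximal ideals, together with the identification of maximal ideals of $A$ (equivalently closed points of $\Spec A$, equivalently classical points of $X$) — here I use that a Tate affinoid algebra is Jacobson, so $\dim A$ is computed on maximal ideals — and finally the fact that $\O_{X,x}$ for a classical point $x$ is a noetherian local ring with $\dim \O_{X,x} = \dim A_{\m_x}$ (the analytic local ring and the algebraic local ring at a maximal ideal of a Tate affinoid algebra have the same dimension; this is standard, e.g. because the completion is faithfully flat with the same dimension, or by Noether normalization for affinoid algebras).

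Putting this together: $X$ has pure dimension $d$ iff every nonempty affinoid open $U = \Spa(B, B^+) \subseteq X$ has $\dim \Spa(B, B^+) = d$, iff every such $U$ has $\dim B = d$, iff for every such $U$ and every maximal ideal $\n$ of $B$ we have $\dim B_\n \le d$ with equality attained, iff $\dim \O_{X,x} = d$ for every classical point $x \in X$ (using that classical points of $X$ are exactly the classical points of the various affinoid opens, and that $\dim \O_{X,x}$ only depends on a neighborhood of $x$). The ``for each classical point $x$, $\dim \O_{X,x} = d$'' condition forces the supremum over any open $U$ to be exactly $d$ as well, because $U$ contains classical points and its dimension equals the supremum of $\dim \O_{X,x}$ over classical $x \in U$; conversely pure dimension $d$ restricts to each small affinoid around $x$. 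The only subtlety to be careful about in writing this up is the direction ``$\dim \O_{X,x} = d$ for all classical $x$'' $\Rightarrow$ ``pure dimension $d$'': one must know that $\dim U = \sup_{x \in U \text{ classical}} \dim \O_{X,x}$ for every affinoid open $U$, i.e. that the spectral dimension of an affinoid is detected on classical points — this is exactly the Jacobson property of affinoid algebras combined with $\dim \Spa = \dim$ of the ring.

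The main obstacle I expect is precisely the bookkeeping in that last equivalence — cleanly citing or proving $\dim \Spa(A,A^+) = \dim A$ for strongly noetherian Tate $A$, and $\dim A = \sup_\m \dim A_\m = \sup_\m \dim \O_{X,\mathrm{cl}(\m)}$, without circularity and with the right level of generality (the paper wants this for rigid-analytic varieties, so one may freely use the structure theory of Tate affinoid algebras: Noether normalization, Jacobson-ness, excellence). Everything else is formal manipulation with open covers and suprema. I would likely cite \cite{H3} for the first identity and standard references (Bosch–Güntzer–Remmert, or Conrad's notes) for the affinoid commutative algebra inputs.
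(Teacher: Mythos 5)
Your proposal is correct and follows essentially the same route as the paper: the paper first cites \cite[Lemma 1.8.6(ii)]{H3} for the identification of pure dimension with ``$\dim A = d$ for every open affinoid $\Spa(A,A^\circ)\subset X$'', and then cites \cite[Prop.\,II.10.1.9 and Cor.\,II.10.1.10]{FujKato} for the equivalence with $\dim\O_{X,x}=d$ at classical points --- exactly the two inputs you identify and then unpack by hand (Jacobson-ness, $\dim A=\sup_\m\dim A_\m$, and $\dim\O_{X,x}=\dim A_{\m_x}$). Your version is just a more detailed expansion of the same two-step argument.
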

\begin{proof}
    First, we note that \cite[Lemma 1.8.6(ii)]{H3} implies that $X$ is of pure dimension $d$ if and only if, for every open affinoid subspace $\Spa(A, A^\circ) \subset X$, we have $\dim A=d$. Then \cite[Prop.\,II.10.1.9 and Cor.\,II.10.1.10]{FujKato} imply that this condition is equivalent to the condition that $\dim \O_{X, x}=d$ for any classical point $x\in X$. 
\end{proof}

\begin{cor}\label{cor:dimension-of-disc} Let $S$ be a locally noetherian analytic adic space, and let $f\colon X \to S$ be a morphism that factors as a composition
\[
X \xr{g} \bf{D}^d_S \xr{\pi} S,
\]
where $g$ is \'etale and $p$ is the natural projection. Then $f$ is of pure relative dimension $d$.
\end{cor}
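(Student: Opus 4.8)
The plan is to reduce the statement about the composite $f\colon X\to\mathbf{D}^d_S\to S$ to the two inputs we already have: Lemma~\ref{lemma:only-rank-1}, which lets us check relative pure dimension fiberwise over rank-$1$ points, and some form of Lemma~\ref{lemma:classical-dimension} identifying our notion of dimension with the classical one on rigid-analytic varieties. So the first step is to fix a rank-$1$ point $s\in S$ with residue field $K(s)$ and pass to the fiber $X_s$, which by base change is an \'etale $K(s)$-adic space over $\mathbf{D}^d_{K(s)}$ (the $d$-dimensional unit polydisc over $K(s)$); since $\mathbf{D}^d_S$ is locally of finite type over $S$, $X_s$ is a rigid-analytic variety over $K(s)$ (more precisely, locally of finite type over $K(s)$, hence covered by rigid-analytic varieties). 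I should first note that $f$ is locally of finite type, so Lemma~\ref{lemma:only-rank-1} applies, and it remains to show each nonempty $X_s$ is of pure dimension $d$.

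Next I would argue that pure dimension $d$ is \'etale-local on the source: by Lemma~\ref{lemma:classical-dimension}, $X_s$ is of pure dimension $d$ iff $\dim\O_{X_s,x}=d$ for every classical point $x$, and an \'etale morphism $g\colon X_s\to\mathbf{D}^d_{K(s)}$ induces isomorphisms (or at least, flat unramified local maps inducing equalities) on completed local rings at classical points lying over classical points, so $\dim\O_{X_s,x}=\dim\O_{\mathbf{D}^d_{K(s)},g(x)}$. Thus it suffices to know that $\mathbf{D}^d_{K(s)}$ itself is of pure dimension $d$, i.e.\ $\dim\O_{\mathbf{D}^d_{K(s)},y}=d$ for every classical point $y$; equivalently, via Lemma~\ref{lemma:classical-dimension}'s reduction to affinoid algebras, that $\dim K(s)\langle T_1,\dots,T_d\rangle=d$, which is standard (the Tate algebra in $d$ variables over a non-archimedean field is noetherian of Krull dimension $d$). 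Putting this together gives that $X_s$ is of pure dimension $d$ for every rank-$1$ point $s$ in the image of $f$, and Lemma~\ref{lemma:only-rank-1} then yields that $f$ is of relative pure dimension $d$.

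I expect the main subtlety — the step most in need of care rather than a genuine obstacle — to be the behavior of dimension under \'etale base change, i.e.\ justifying $\dim\O_{X_s,x}=\dim\O_{\mathbf{D}^d_{K(s)},g(x)}$ for classical points. One has to be a little careful because a classical point of $X_s$ need not map to a classical point of $\mathbf{D}^d_{K(s)}$ in general; but for \'etale morphisms of rigid-analytic varieties the image of a classical point is classical (the residue field extension is finite), so this is fine, and then flatness and the fact that \'etale maps are quasi-finite give the equality of local dimensions. A cleaner alternative, which I would probably adopt to avoid fussing with local rings, is to invoke \cite[Lemma 1.8.6]{H3} together with the already-recorded fact that \'etale morphisms preserve pure relative dimension $0$ (or preserve dimension of locally spectral spaces in the appropriate sense), combined with the obvious assertion $\dim(\mathbf{D}^d_S/S)=d$ which follows from Lemma~\ref{lemma:classical-dimension} applied to Tate algebras; then relative pure dimension is additive in the tower $X\xrightarrow{g}\mathbf{D}^d_S\xrightarrow{\pi}S$ because $g$ has relative pure dimension $0$. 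Either route is short; the only thing to double-check is that "$\pi$ is of relative pure dimension $d$" is literally what Lemma~\ref{lemma:classical-dimension} plus the Tate algebra computation gives after the rank-$1$ reduction of Lemma~\ref{lemma:only-rank-1}.
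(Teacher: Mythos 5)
Your proposal is correct and follows essentially the same route as the paper: reduce via Lemma~\ref{lemma:only-rank-1} to the fiber over a rank-$1$ point, then show an adic space \'etale over $\bf{D}^d_{K}$ has pure dimension $d$. The paper simply cites \cite[Example 1.8.9(i)]{H3} for this last step, and its stated alternative (\'etale maps have pure dimension $0$ plus the Krull-dimension computation for Tate algebras) is exactly the argument you spell out.
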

\begin{proof}
    By Lemma~\ref{lemma:only-rank-1}, it suffices to assume that $S=\Spa(K, \O_K)$ for some non-archimedean field $K$. In this situation, the question reduces to showing that $X$ is an adic space of pure dimension $d$. Then \cite[Example 1.8.9(i)]{H3} does the job (alternatively, one can use that \'etale morphisms are of pure dimension $0$ and \cite[Proposition 2.2/17 and Proposition 4.1/2]{B} to get the claim).
\end{proof}

Now, we wish to show that any weakly finite type morphism $f$ is of finite (relative) dimension. Surprisingly, this claim seems missing in \cite{H3}. For this, we need a number of preliminary lemmas that will allow us to reduce the general case to the case when $f$ is of finite type. The motivation for considering non-finite type morphisms comes from the theory of universal compactifications that are (essentially) never finite type (and merely $+$-weakly finite type).

\begin{lemma}\label{lemma:weakly-finite-almost-finite} Let $(A, A^+) \to (B, B^+)$ be a morphism of strongly noetherian Tate pairs such that $B$ is finite over $A$. Denote by $B'^+$ the integral closure of $A^+$ in $B$. Then
\begin{enumerate}
    \item $(B, B'^+)$ is a Huber pair;
    \item $(A, A^+) \to (B, B'^+)$ is a finite morphism.
\end{enumerate}
\end{lemma}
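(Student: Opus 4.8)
The plan is to reduce everything to a statement about the finite $A$-algebra $B$ and then invoke standard facts about integral closures in finite extensions, together with the fact that strongly noetherian Tate pairs behave well under finite ring extensions.

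First I would set up the topology. Since $(A,A^+)$ is a Tate Huber pair, pick a pseudo-uniformizer $\varpi \in A^+$; since $B$ is finite over $A$, $\varpi$ remains topologically nilpotent and a unit in $B$, so $B$ is naturally a Tate ring with ring of definition generated by (a ring of definition of $A$ together with) the finitely many generators of $B$ over $A$. Concretely, $B$ is a finitely generated module over a noetherian ring of definition $A_0 \subset A$, hence $B_0 \coloneqq$ (the $A_0$-subalgebra generated by a finite set of module generators, or equivalently any module-finite $A_0$-subalgebra whose $\varpi$-adic completion is open) is a ring of definition of $B$, and this topology is independent of choices because any two $\varpi$-adically complete module-finite $A_0$-subalgebras are commensurable. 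With the topology on $B$ fixed, $B'^+ \subseteq B$ is the integral closure of $A^+$, which is in particular integrally closed in $B$, hence in $B^\circ$; so to show $(B,B'^+)$ is a Huber pair it remains to check $B'^+$ is open. But $B'^+$ contains $\varpi^N B_0$ for $N$ large — indeed each module generator $b_i$ of $B$ over $A_0$ satisfies $\varpi^{N} b_i \in A^+[b_1,\dots,b_n]$ being integral over $A^+$ once $N$ is large enough to absorb denominators, and one checks $\varpi^N B_0$ lands in the integral closure — so $B'^+$ is open, and it is clearly a ring of integral elements. That gives (1).

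For (2), I must show $(A,A^+) \to (B,B'^+)$ is finite in the sense of adic spaces, i.e. $B$ is finite as an $A$-module (which is given) and $(B,B'^+)$ is topologically of finite type over $(A,A^+)$ with the $+$-structure being the integral closure — but that is essentially the definition: a finite morphism of Huber pairs $(A,A^+)\to(B,B^+)$ asks that $B$ be a finite $A$-module, that $B$ carry the natural ($A$-module) topology, and that $B^+$ be the integral closure of the image of $A^+$ together with... precisely $B'^+$. So once the topology on $B$ is identified with the module topology (done above) and $B'^+$ is by construction the integral closure of $A^+$ in $B$, finiteness of $(A,A^+)\to(B,B'^+)$ is immediate from the definition (cf.\ \cite[\S1.4]{H3}); the only real content was verifying in part (1) that this candidate pair is actually a Huber pair so that "finite morphism" makes sense.

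The main obstacle is the openness of $B'^+$: a priori the integral closure of an open ring could fail to be open, and one genuinely uses here that $A$ is Tate (so there is a pseudo-uniformizer to clear denominators) and that $B$ is module-finite over a \emph{noetherian} ring of definition (so that $\varpi^N B_0$ is a well-behaved open subgroup contained in the integral closure). I would isolate this as the one computational lemma and dispatch the rest by unwinding definitions. A clean alternative I might mention: realize $(B,B'^+)$ as the relative normalization of $\Spa(A,A^+)$ in the finite $\Spec$-algebra $B$ restricted over the analytic locus, for which openness of $B'^+$ is standard; but the direct argument above is self-contained.
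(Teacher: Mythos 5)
Your overall strategy matches the paper's: use a pseudo-uniformizer to clear denominators so that module generators of $B$ over $A$ become integral over $A^+$, then exhibit an open, module-finite ring of definition of $B$ contained in $B'^+$. You also correctly identify the crux as the openness of $B'^+$. But your argument for that crux has a genuine gap: you assert that $B_0$ (the $A_0$-subalgebra generated by the generators) ``is a ring of definition of $B$,'' i.e.\ is open and bounded for the topology $B$ already carries as part of the given Huber pair $(B,B^+)$, and nothing in your write-up supplies this. A priori the given topology on $B$ could be strictly finer than the $A$-module topology, in which case $B_0$, $\varpi^N B_0$, and hence $B'^+$ would fail to be open. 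The paper closes exactly this gap with Huber's Open Mapping Theorem \cite[Lemma 2.4(i)]{H2}: the continuous $A$-algebra surjection $\varphi\colon A\langle T_1,\dots,T_n\rangle \to B$, $T_i\mapsto b_i$ (continuous because the rescaled $b_i$ are power-bounded), is open since both rings are complete Tate with noetherian rings of definition; hence $B_0=\varphi(A_0\langle T_1,\dots,T_n\rangle)$ is open for the given topology, and reducing mod $\varpi$ plus completeness shows $B_0$ is finite over $A_0$, whence $B_0\subset B'^+$. Your observation that any two complete module-finite $A_0$-subalgebras are commensurable only compares candidate module topologies with each other; it never compares them with the topology $B$ is actually given, and that comparison is where the Open Mapping Theorem (or an equivalent Banach-type argument) is unavoidable.

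Two smaller points. The subalgebra generated by the module generators is module-finite over $A_0$ and lands in $B'^+$ only \emph{after} the rescaling $b_i\mapsto\varpi^N b_i$; as written you introduce $B_0$ before rescaling, and the later claim that $\varpi^N B_0\subset B'^+$ is not quite right as stated (multiplying a general element of a non-integral subalgebra by $\varpi^N$ does not make it integral) --- the correct statement is that after rescaling the generators, $B_0$ itself is integral over $A_0\subset A^+$ and hence contained in $B'^+$. For part (2), your unwinding of the definition is fine and matches the paper's one-line argument ($A\to B$ finite by hypothesis, $A^+\to B'^+$ integral by construction), but insofar as it presupposes that the topology on $B$ is the $A$-module topology it inherits the same gap.
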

\begin{proof}
    The subring $B'^+$ of $B$ is clearly integrally closed in $B$. It is also contained in $B^\circ$ because $B'^+ \subset B^+ \subset B^\circ$. So, in order to show that $(B, B'^+)$ is a Huber pair, we only need to show that it is open.  \smallskip
    
    For this, we choose a ring of definition $A_0$, a pseudo-uniformizer $\varpi \in A_0$, and $(b_1, \dots, b_n)$ a finite set of $A$-module generators of $B$.  Since $B$ is finite over $A$, for each generator $b_i\in B$, we can choose monic polynomials
    \begin{equation}\label{eqn:integral}
        b_i^{m_i} + a_{i, 1}b^{m_i-1}_{i} + \dots + a_{i, m_i} = 0
    \end{equation}
    with $a_{i, j}\in A$. By construction, there is an integer $N$ such that $\varpi^N a_{i, j}\in A_0$ for all $i,j$. Using Equation~(\ref{eqn:integral}), it is easy to see that all elements $\varpi^N b_i$ are {\it integral over $A_0$}. Thus, we can replace each $b_i$ with $\varpi^N b_i$ to assume that the $A$-module generators $b_i$ are integral over $A_0$. In particular, we can assume that each $b_i$ is integral over $A^+$, so they all lie in $B'^+ \subset B^\circ$. 
    \smallskip
    
    Now consider the unique $A$-linear morphism
    \[
    \varphi \colon A\langle T_1, \dots, T_n\rangle \to B
    \]
    that sends $T_i$ to $b_i$. It is clearly surjective, and therefore it is open by the Open Mapping theorem (see \cite[Lemma 2.4(i)]{H2}), so we define 
    \[
        B_0 \coloneqq \varphi(A_0\langle T_1, \dots, T_n\rangle).
    \]
    This is then a ring of definition in $B$ with a pseudo-uniformizer given by $\varpi$. By construction, the morphism 
    \[
    A_0/\varpi A_0 \to B_0/\varpi B_0
    \]
    is finite. Therefore, using that $A_0$ and $B_0$ are complete, we conclude that $B_0$ is finite over $A_0$. In particular, elements of $B_0$ are integral over $A^+$, so $B_0 \subset B'^+$. This ensures that $B'^+$ is open. This finishes the proof that $(B, B'^+)$ is a Huber pair. \smallskip
    
    The morphism $(A, A^+) \to (B, B'^+)$ is now clearly finite. Indeed, $A \to B$ is finite by the assumption, and $A^+ \to B'^+$ is integral by construction. 
\end{proof}

\begin{cor}\label{cor:change-plus-rings} Let $f\colon (A, A^+) \to (B, B^\circ)$ be weakly finite type morphism of strongly noetherian Tate affinoids. Then there is a Huber pair $(B, B^+)$ such that $f$ factors through $(B, B^+)\to (B, B^\circ)$, and $(B, B^+)$ is topologically finite type over $(A, A^+)$.
\end{cor}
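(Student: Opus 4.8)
The plan is to take $(B,B^+)$ to be the ``canonical'' Huber pair attached to a presentation of $B$ as a topological $A$-algebra. Since $f$ is weakly finite type, I would first fix an integer $n$ and a continuous surjective $A$-algebra homomorphism
\[
\varphi\colon A\langle T_1,\dots,T_n\rangle \twoheadrightarrow B,
\]
which is automatically open by the Open Mapping theorem (\cite[Lemma 2.4(i)]{H2}, applied exactly as in the proof of Lemma~\ref{lemma:weakly-finite-almost-finite}, using that source and target are complete). I would then define $B^+$ to be the integral closure in $B$ of the subring $\varphi\big(A^+\langle T_1,\dots,T_n\rangle\big)$; equivalently, the integral closure of the subring of $B$ generated by $\varphi(A^+)$ together with the elements $t_i\coloneqq\varphi(T_i)$.

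The next step is to check that $(B,B^+)$ is a Huber pair, i.e.\ that $B^+$ is an open, integrally closed subring of $B$ contained in $B^\circ$. Integral closedness holds by construction. The containment $B^+\subseteq B^\circ$ holds because $A^+\langle T_1,\dots,T_n\rangle\subseteq A\langle T_1,\dots,T_n\rangle^\circ$ (as $A^+\subseteq A^\circ$ and the $T_i$ are power-bounded), so $\varphi\big(A^+\langle T_1,\dots,T_n\rangle\big)\subseteq B^\circ$ by continuity of $\varphi$, and $B^\circ$ is integrally closed in $B$ (a standard fact for Huber rings, e.g.\ because $B^\circ$ is the filtered union of the rings of definition of $B$). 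Openness holds because $A^+$ is open in $A$, hence $A^+\langle T_1,\dots,T_n\rangle$ is open in $A\langle T_1,\dots,T_n\rangle$, hence its image under the open map $\varphi$ is open in $B$, and $B^+$ contains this image.

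It remains to observe that everything fits together, which is a matter of unwinding definitions. The presentation $\varphi$ together with the choice of $B^+$ exhibits $(A,A^+)\to(B,B^+)$ as a topologically finite type morphism of Huber pairs: note that $\varphi|_A$ is precisely the structure map $A\to B$ underlying $f$, and $\varphi(A^+)\subseteq\varphi\big(A^+\langle T_1,\dots,T_n\rangle\big)\subseteq B^+$, so this is indeed a morphism of Huber pairs, with the required presentation. Finally, $B^+\subseteq B^\circ$, so the identity of $B$ defines a morphism of Huber pairs $(B,B^+)\to(B,B^\circ)$, and by the previous remark $f$ factors as $(A,A^+)\to(B,B^+)\to(B,B^\circ)$. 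I do not expect a genuine obstacle here: beyond unwinding definitions, the only inputs are the Open Mapping theorem (to know $\varphi$, and hence $B^+$, is open) and the integral closedness of $B^\circ$ in $B$, both of which are standard.
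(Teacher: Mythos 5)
Your construction is correct and is essentially the paper's: the paper uses the same presentation $\varphi\colon A\langle T_1,\dots,T_n\rangle\twoheadrightarrow B$ and the same ring $B^+$ (the integral closure in $B$ of the image of $A\langle T_1,\dots,T_n\rangle^+$, which agrees with the integral closure of $\varphi(A^+\langle T_1,\dots,T_n\rangle)$), the only difference being that it delegates the verification that $(B,B^+)$ is a Huber pair to Lemma~\ref{lemma:weakly-finite-almost-finite} instead of checking the axioms directly as you do.

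Two small corrections. First, your parenthetical ``equivalently'' is false: the integral closure of the subring generated by $\varphi(A^+)$ and the $t_i$, i.e.\ of $\varphi(A^+[T_1,\dots,T_n])$, can be strictly smaller than the integral closure of $\varphi(A^+\langle T_1,\dots,T_n\rangle)$, and need not even be a ring of integral elements. For example, take $(A,A^+)=(\Q_p,\Z_p)$, $B=\Q_p\langle T\rangle$ and $\varphi=\mathrm{id}$: the element $h=\sum_{k\ge 1}p^kT^k=pT/(1-pT)\in\Z_p\langle T\rangle$ is topologically nilpotent, hence lies in every ring of integral elements, but it is not integral over $\Z_p[T]$ (an integral equation for $h$ over $\Z_p[T]$, multiplied through by $(1-pT)^m$, becomes a polynomial identity which evaluates to $1=0$ at $T=1/p$). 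So keep only your primary definition of $B^+$, which is in any case what all your subsequent verifications use. Second, the inclusion $\varphi(A^+\langle T_1,\dots,T_n\rangle)\subseteq B^\circ$ does not follow from continuity alone: a continuous homomorphism of general Huber rings need not preserve power-bounded elements. What you need is that a continuous homomorphism of Tate rings is automatically adic and that adic homomorphisms preserve power-bounded elements; this is exactly the point the paper makes by citing \cite[Lemma 1.8]{H1}.
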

\begin{proof}
    Since $(B, B^\circ)$ is weakly finite type over $(A, A^+)$, there is a surjective morphism
    \[
    g\colon A\langle T_1, \dots, T_n\rangle \to B.
    \]
    Since any morphism of Tate rings is adic, and adic morphisms preserve bounded elements (see \cite[Lemma 1.8]{H1}), we conclude $g$ induces a morphism of Huber pairs
    \[
    g\colon (A\langle T_1, \dots, T_n\rangle, A\langle T_1, \dots, T_n\rangle^+)  \to (B, B^\circ)
    \]
    Thus we can apply Lemma~\ref{lemma:weakly-finite-almost-finite} to $g$ to get a Huber sub-pair $(B, B^+)\subset (B, B^\circ)$ such that $B^+$ is integral over the image $A\langle T_1, \dots, T_n\rangle^+$ in $B$. In particular, the morphism $(A, A^+) \to (B, B^+)$ is topologically of finite type. 
\end{proof}

\begin{lemma}\label{lemma:+-weakly-finite-type-finite-dimensional} Let $f\colon X \to Y$ be a weakly finite type morphism, and let $Y$ be quasi-compact. Then $f$ is of finite dimension. 
\end{lemma}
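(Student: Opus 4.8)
The plan is to reduce to the finite type case via Corollary~\ref{cor:change-plus-rings}, and then handle the finite type case by a noetherian induction on dimension, ultimately bottoming out at Corollary~\ref{cor:dimension-of-disc}. First I would observe that the statement is local on $Y$: since $Y$ is quasi-compact, I may cover it by finitely many affinoids $\Spa(A_i, A_i^+)$ with $A_i$ strongly noetherian Tate, and since $\dim f = \sup_i \dim(f|_{f^{-1}(U_i)})$, it suffices to treat each $U_i$ separately; thus I may assume $Y = \Spa(A, A^+)$ is a strongly noetherian Tate affinoid. Similarly, $X$ is covered (perhaps not finitely) by affinoids $\Spa(B_j, B_j^+)$ that are weakly finite type over $(A, A^+)$, and $\dim f = \sup_j \dim(f|_{\Spa(B_j, B_j^+)})$; the subtlety is that this is a \emph{supremum} over possibly infinitely many pieces, so to conclude finiteness I really want a \emph{uniform} bound.

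The key point is that the uniform bound should come from the number of generators. Given an affinoid piece $\Spa(B, B^+) \to \Spa(A, A^+)$ that is weakly finite type, Corollary~\ref{cor:change-plus-rings} produces a factorization $\Spa(B, B^\circ) \to \Spa(B, B^+) \to \Spa(A, A^+)$ with $(B, B^+)$ topologically of finite type over $(A, A^+)$; concretely $B$ is a quotient of $A\langle T_1, \dots, T_n\rangle$. Since the map $\Spa(B, B^\circ) \to \Spa(B, B^+)$ is a subspace inclusion on underlying spaces (same ring $B$, smaller ring of integral elements — it's an open immersion onto its image, hence does not increase dimension), it suffices to bound $\dim \Spa(B, B^+)$. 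Now $\Spa(B, B^+) \hookrightarrow \mathbf{D}^n_Y = \Spa(A\langle T_1, \dots, T_n\rangle, \dots)$ is a closed immersion, and $\mathbf{D}^n_Y \to Y$ has relative pure dimension $n$ by Corollary~\ref{cor:dimension-of-disc} (with $g = \mathrm{id}$). Hence $\dim \Spa(B, B^+) \le \dim \mathbf{D}^n_Y \le n + \dim Y$, where $\dim Y < \infty$ because $Y$ is a noetherian affinoid (its dimension equals $\dim A < \infty$ by noetherianness of $A$, e.g.\ via \cite[Lemma 1.8.6]{H3}).

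The remaining obstacle is to turn these piecewise bounds into a single global bound, i.e.\ to rule out that the $n_j$ blow up as $j$ ranges over the cover. Here I would use quasi-compactness of $X$ is \emph{not} available in general — the lemma only assumes $Y$ quasi-compact — so instead I should bound $\dim f$ directly in terms of $\dim f^{-1}(y)$ for $y \in Y$ a rank-$1$ point. Actually the cleanest route: by definition $\dim f = \sup_{y} \dim f^{-1}(y)$, and each fiber $f^{-1}(y) = X \times_Y \Spa(K(y), \O_{K(y)})$ is a weakly finite type adic space over a field $K(y)$. So I reduce immediately to the case $Y = \Spa(K, \O_K)$, where $\dim Y = 0$, and I must show a weakly finite type adic space $X$ over $K$ has finite dimension. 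Now $X$ is locally of the form $\Spa(B, B^\circ)$ with $B$ weakly finite type over $K$; by the argument above each such piece has dimension $\le n_j$ where $n_j$ is the number of generators. It remains to see this is bounded independent of the piece — but $\dim B \le \dim B^+$ where $B^+$ is topologically finite type over $(K, \O_K)$, and for such affinoids $\dim B$ is bounded by the Krull dimension, which one controls because $B^+$ is a quotient of $\O_K\langle T_1,\dots,T_n\rangle$; yet $n$ still varies. The honest fix is that $\dim$ of an adic space is a \emph{local} invariant attaining its sup on some affinoid, so I claim $\dim X = \sup_j \dim \Spa(B_j, B_j^\circ)$ and I only need each term finite plus the fact (from \cite[Lemma 1.8.6]{H3}) that $\dim X$ is computed as a supremum that is actually realized; combined with $X$ being covered by spaces each of finite dimension and the local nature of specialization chains, finiteness of each piece suffices once I note a specialization chain in $X$ of length $d$ must be contained in a quasi-compact open, hence in finitely many of the $\Spa(B_j, B_j^\circ)$, hence has length $\le \sum$(finitely many finite numbers) $< \infty$ — wait, that still is not a uniform bound. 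I expect the genuine argument in the paper uses that over a field, a weakly finite type $B$ has $\dim B \le \dim(B^+) = \dim(\text{a quotient of }\O_K\langle T_1,\dots,T_n\rangle)$ and then invokes that $\dim X$ for $X/K$ rigid-type is the maximal $\dim \O_{X,x}$ which is finite — so I would finish by reducing, via Corollary~\ref{cor:change-plus-rings}, to rigid spaces over $K$ and applying Lemma~\ref{lemma:classical-dimension} together with the fact that the local rings of an affinoid algebra over $K$ are noetherian of dimension $\le \dim A < \infty$. The main obstacle, then, is bookkeeping: making sure the $+$-structures and the passage $B^\circ \rightsquigarrow B^+$ do not affect the underlying topological space's dimension, which follows since that passage is an isomorphism on underlying spaces up to an open immersion.
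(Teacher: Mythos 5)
Your chain of reductions (Corollary~\ref{cor:change-plus-rings}, then a closed immersion into $\bf{D}^n_Y$, then Corollary~\ref{cor:dimension-of-disc}) is the right skeleton and matches the paper's, but the proposal has a gap that you yourself flag and never close: the ``uniform bound'' over the affinoid pieces of $X$. This worry is a red herring. In the terminology the paper adopts (\cite[Def.\,1.3.3]{H3}), \emph{weakly finite type} means quasi-compact \emph{and} locally of weakly finite type; since $Y$ is quasi-compact and $f$ is quasi-compact, $X$ is quasi-compact, so finitely many affinoid pieces suffice and $\dim f$ is a maximum of finitely many finite numbers. No bound uniform in the number of generators is needed, and the detour through fibers over rank-$1$ points of $Y$ (which only replaces $Y$ by a point and does not make $X$ quasi-compact) does not repair anything. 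As written, the proposal ends with an explicit admission that the uniformity issue is unresolved, so it is not a complete proof.

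A second, smaller issue is the passage between $B^+$ and $B^\circ$. The inclusion $\Spa(B, B^\circ)\subset \Spa(B, B^+)$ is not an open immersion (it is only a generalizing, pro-open subset), and, more importantly, the naive subspace comparison goes the wrong way: the space you must control is $\Spa(B, B^+)$, which is \emph{larger} than $\Spa(B, B^\circ)$, and its extra points are higher-rank specializations that could a priori lengthen specialization chains. Likewise, the topologically finite type model $(B, B'^+)$ produced by Corollary~\ref{cor:change-plus-rings} satisfies $B'^+\subset B^\circ$, so $\Spa(B,B'^+)$ need not contain $\Spa(B,B^+)$ either. The correct fix, which is what the paper does, is to note that all of these spaces have the same rank-$1$ points, hence the same $\rm{dim.tr}$ over $Y$, and that for (locally) weakly finite type morphisms the relative dimension is controlled by $\rm{dim.tr}$; this observation is applied once to replace $B^+$ by $B^\circ$ and once more after invoking Corollary~\ref{cor:change-plus-rings}.
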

\begin{proof}
    An easy argument with quasi-compactness reduces the general case to the case of a weakly finite type morphism of affinoid spaces $f\colon X=\Spa(B, B^+) \to Y=\Spa(A, A^+)$, i.e., $B$ is topologically of finite type over $A$. Now note that the natural inclusion 
    \[
    X'=\Spa(B, B^\circ) \to X=\Spa(B, B^+)
    \]
    is a bijection on rank-$1$ points. Therefore, $\rm{dim.tr}(X/Y)=\rm{dim.tr}(X'/Y)$ (see \cite[Def.\,1.8.4]{H3}). In particular, we can replace $B^+$ with $B^\circ$. \smallskip
    
    In this case, we apply Corollary~\ref{cor:change-plus-rings} and a similar argument once again to reduce to the case of a finite type morphism $\Spa(B, B^+)\to \Spa(A, A^+)$. In this case, there is  closed immersion
    \[
    X \to \bf{D}^n_Y,
    \]
    so it suffices to show the claim for the relative closed unit disk $\bf{D}^n_Y\to Y$. This case follows from Corollary~\ref{cor:dimension-of-disc}. 
\end{proof}

\section{Coherent sheaves}\label{section:sheaves}

In this section, we review the basic theory of coherent sheaves on locally noetherian analytic adic spaces. \smallskip

We first recall the construction of an $\O_X$-module $\widetilde{M}$ on a strongly noetherian analytic affinoid $X=\Spa(A, A^+)$ associated to a finite $A$-module $M$. For each rational subset $U \subset X$, we have 
\[
\widetilde{M}(U)=\O_X(U)\otimes_{A} M;
\]
\cite[Thm.\,1.4.16]{KedAr} and \cite[Thm.\,1.2.11]{KedAr} guarantee that this assignment is indeed a sheaf. 

\begin{defn} An $\O_X$-module $\F$ on a locally strongly noetherian analytic adic space $X$ is {\it coherent} if there is an open covering $X=\cup_{i\in I} U_i$ by strongly noetherian affinoids such that $\F|_{U_i} \cong \widetilde{M}_i$ for a finite $\O_X(U_i)$-module $M_i$.
\end{defn}

For a strongly noetherian analytic affinoid $X=\Spa(A, A^+)$, we get a functor $\widetilde{(-)}\colon \bf{Mod}_A^{\rm{fg}} \to \bf{Coh}_X$. Similarly to the algebraic situation, this functor turns out to be an equivalence. 

\begin{thm}\label{thm:huber-coherent-sheaves} Let $X=\Spa(A, A^+)$ be a strongly noetherian affinoid, and let $\F$ be a coherent $\O_X$-module. Then
\begin{enumerate}
    \item\label{thm:huber-coherent-sheaves-1} the functor $\widetilde{(-)}\colon \bf{Mod}_A^{\rm{fg}} \to \bf{Mod}_X$ is exact;
    \item\label{thm:huber-coherent-sheaves-2} the functor $\widetilde{(-)}\colon \bf{Mod}_A^{\rm{fg}} \to \bf{Coh}_X$ is an equivalence with quasi-inverse taking $\F$ to $\Gamma(X, \F)$;
    \item\label{thm:huber-coherent-sheaves-3} for any $\F\in \bf{Coh}_X$, $\rm{H}^i(X, \F)=0$ for $i\geq 1$.
    \item\label{thm:huber-coherent-sheaves-4} the inclusion $\bf{Coh}_X$ is a weak Serre subcategory of $\bf{Mod}_X$. In other words, coherent sheaves are closed under kernels, cokernels, and extensions;
\end{enumerate}
\end{thm}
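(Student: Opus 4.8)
The plan is to reduce everything to three facts from the literature about a strongly noetherian Tate affinoid $X=\Spa(A,A^+)$ (see, e.g., \cite{KedAr}): (i) for every rational subset $U\subseteq X$ the ring map $A\to\O_X(U)$ is flat; (ii) Tate acyclicity, $\rm{H}^i(U,\O_X)=0$ for all $i\geq 1$ and all rational $U$; and (iii) the Kiehl-type gluing statement that an $\O_X$-module which is, rational-locally, of the form $\widetilde{M}$ is already globally of that form. Granting these, part (1) is a stalk computation: rational subsets form a neighborhood basis of any $x\in X$, so $\widetilde{M}_x=\varinjlim_{x\in U}\big(\O_X(U)\otimes_A M\big)=\O_{X,x}\otimes_A M$, and $\O_{X,x}$ is a filtered colimit of the flat $A$-algebras $\O_X(U)$, hence $A$-flat; thus $M\mapsto\widetilde{M}_x$ is exact for every $x$, and exactness of $\widetilde{(-)}$ may be checked on stalks.

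For part (2), the identity $\Gamma(X,\widetilde{M})=\O_X(X)\otimes_A M=M$ is immediate and natural, so it suffices to prove that $\widetilde{(-)}$ is fully faithful and essentially surjective onto $\bf{Coh}_X$; essential surjectivity is precisely (iii). Full faithfulness is formal from part (1): pick a finite presentation $A^m\to A^n\to M\to 0$, apply the exact functor $\widetilde{(-)}$ to get $\O_X^m\to\O_X^n\to\widetilde{M}\to 0$, apply the left-exact functor $\HHom_{\O_X}(-,\widetilde{N})$ to identify $\HHom_{\O_X}(\widetilde{M},\widetilde{N})$ with $\ker(\widetilde{N}^n\to\widetilde{N}^m)$, and take global sections (left exact, with $\Gamma(X,\widetilde{N}^k)=N^k$) to get $\Hom_{\O_X}(\widetilde{M},\widetilde{N})=\ker(N^n\to N^m)=\Hom_A(M,N)$, the last two maps being the one induced by the presentation matrix; one checks this agrees with $\widetilde{(-)}$. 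Since $\Gamma\circ\widetilde{(-)}\cong\mathrm{id}$, the quasi-inverse of the resulting equivalence is $\Gamma(X,-)$.

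For part (3), using part (2) it is enough to treat $\F=\widetilde{M}$. Fix a finite rational covering $\mathfrak{U}$ of a rational subset $U\subseteq X$. Its augmented \v{C}ech complex $0\to\O_X(U)\to\check{C}^\bullet(\mathfrak{U},\O_X)$ is exact by (ii) and, being a bounded complex whose terms are finite products of rational localizations, is a complex of flat $A$-modules by (i); tensoring over $A$ with $M$ — and using that finite products commute with $-\otimes_A M$ — shows $0\to\widetilde{M}(U)\to\check{C}^\bullet(\mathfrak{U},\widetilde{M})$ is exact, i.e.\ $\widetilde{M}$ is \v{C}ech-acyclic for finite rational coverings. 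Since the rational subsets form a basis of $X$ closed under finite intersections and every open covering of a rational subset admits a finite rational refinement, the standard comparison between \v{C}ech and derived cohomology upgrades this to $\rm{H}^i(U,\widetilde{M})=0$ for all $i\geq 1$. (Alternatively: resolve $M$ by finite free $A$-modules, apply $\widetilde{(-)}$, and dimension-shift, using $\rm{H}^{>0}(X,\O_X)=0$ together with the finiteness of the cohomological dimension of the spectral space $X$.)

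Finally, part (4) is local, so after restricting to an affinoid and refining the trivializing covers to a common one I may assume $X$ is a strongly noetherian affinoid and every coherent sheaf in sight is globally of the form $\widetilde{(-)}$. A morphism $\widetilde{M}\to\widetilde{N}$ equals $\widetilde{\varphi}$ for some $\varphi\colon M\to N$ by full faithfulness, and since $\widetilde{(-)}$ is exact its kernel and cokernel (formed in $\bf{Mod}_X$) are $\widetilde{\ker\varphi}$ and $\widetilde{\coker\varphi}$, coherent because $A$ is noetherian. For an extension $0\to\widetilde{M'}\to\F\to\widetilde{M''}\to 0$ in $\bf{Mod}_X$, applying $\Gamma(X,-)$ and using $\rm{H}^1(X,\widetilde{M'})=0$ from part (3) gives a short exact sequence $0\to M'\to\Gamma(X,\F)\to M''\to 0$, so $\Gamma(X,\F)$ is a finite $A$-module; the canonical map $\widetilde{\Gamma(X,\F)}\to\F$ fits in a morphism of short exact sequences that is the identity on the sub- and quotient-objects, hence an isomorphism by the five lemma, whence $\F\cong\widetilde{\Gamma(X,\F)}$ is coherent. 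The one genuinely non-formal ingredient, and the step I expect to be the main obstacle, is the gluing statement (iii) used for essential surjectivity in part (2) — a form of Tate's/Kiehl's theorem for strongly noetherian adic spaces; the rest is bookkeeping around flatness of rational localizations and Tate acyclicity, which I would cite.
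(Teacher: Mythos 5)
Your proposal is correct and follows essentially the same route as the paper, which simply cites Kedlaya--Liu (flatness of rational localization, Tate acyclicity, and the Kiehl-type gluing theorem) for parts (1)--(3) and deduces (4) by exactly the standard argument you spell out (kernels/cokernels via exactness of $\widetilde{(-)}$ and noetherianity of $A$, extensions via $\rm{H}^1$-vanishing and the five lemma). You correctly isolate the one genuinely non-formal input --- the gluing/essential-surjectivity statement --- as the piece to be cited rather than proved.
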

\begin{proof}
    First note that $A$ is sheafy by \cite[Thm.\,1.2.11]{KedAr} or \cite[Cor.\,1.3]{sheafy}. So (\ref{thm:huber-coherent-sheaves-1}) can be easily deduced from \cite[Thm.\,1.4.14]{KedAr}.  (\ref{thm:huber-coherent-sheaves-2}) and (\ref{thm:huber-coherent-sheaves-3}) follow from \cite[Thm.\,1.4.18]{KedAr}. Finally, (\ref{thm:huber-coherent-sheaves-4}) can be deduced from all (\ref{thm:huber-coherent-sheaves-1}-\ref{thm:huber-coherent-sheaves-3}) by a standard argument. 
\end{proof}

\begin{lemma}\label{lemma:pullback-coherent} Let $f\colon X \to Y$ be a morphism of locally noetherian analytic adic spaces, and let $\F$ be a coherent $\O_Y$-module. Then
\begin{enumerate}
    \item\label{lemma:pullback-coherent-1} the pullback $f^*\F$ is a coherent $\O_X$-module;
    \item\label{lemma:pullback-coherent-2} if $X=\Spa(B, B^+)$ and $Y=\Spa(A, A^+)$ are affinoid and $\F=\widetilde{M}$ for a finite $A$-module $M$, then $f^*\F \simeq \widetilde{M\otimes_A B}$.
\end{enumerate}
\end{lemma}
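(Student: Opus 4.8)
The plan is to reduce everything to the affinoid case and then to the explicit formula in part~(\ref{lemma:pullback-coherent-2}); part~(\ref{lemma:pullback-coherent-1}) follows formally once (\ref{lemma:pullback-coherent-2}) is known, since coherence is local on both source and target. So I would start by proving (\ref{lemma:pullback-coherent-2}). First, recall that for a morphism of strongly noetherian Tate affinoids $f\colon X=\Spa(B,B^+)\to Y=\Spa(A,A^+)$ and a finite $A$-module $M$, the $\O_X$-module $f^*\F = f^{-1}\widetilde{M}\otimes_{f^{-1}\O_Y}\O_X$ is computed by sheafification of the obvious presheaf. I would construct a natural map $f^*\widetilde{M}\to \widetilde{M\otimes_A B}$: on global sections $\widetilde{M}(Y)=M$ maps to $(M\otimes_A B)=\widetilde{M\otimes_A B}(X)$, and the universal property of $f^*$ (adjunction between $f^*$ and $f_*$) produces the map, using that $\widetilde{M\otimes_A B}$ is an $\O_X$-module. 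Then I would check it is an isomorphism by evaluating on a cofinal system of rational subsets.

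The key computation is this: for a rational open $U=\Spa(\O_X(U),\O_X(U)^+)\subset X$, we have $\widetilde{M\otimes_A B}(U)=\O_X(U)\otimes_B(M\otimes_A B)=\O_X(U)\otimes_A M$ by associativity of tensor product, which is exactly $\widetilde{M}$ evaluated "over $A$" against $\O_X(U)$. On the other hand, the presheaf pullback $(f^{\mathrm{pre},*}\widetilde{M})(U)=\colim_{V\supset f(U)}\big(\widetilde M(V)\otimes_{\O_Y(V)}\O_X(U)\big)$; since $M$ is finite over the noetherian ring $A$ and $\widetilde{M}(V)=\O_Y(V)\otimes_A M$, each term is $\O_X(U)\otimes_A M$, so the colimit is already $\O_X(U)\otimes_A M$ and no sheafification is needed — here I use Theorem~\ref{thm:huber-coherent-sheaves}(\ref{thm:huber-coherent-sheaves-1}), or rather directly the fact that $\widetilde{(-)}$ is a sheaf and is computed by base change on rational subsets. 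Hence the comparison map is an isomorphism on rational subsets, and since these form a basis of the topology, it is an isomorphism of $\O_X$-modules. This also shows $f^*\widetilde M\simeq\widetilde{M\otimes_A B}$ is coherent, giving (\ref{lemma:pullback-coherent-1}) in the affinoid case.

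For the general case of (\ref{lemma:pullback-coherent-1}), I would cover $Y$ by strongly noetherian affinoids $V_j$ with $\F|_{V_j}\simeq\widetilde{M_j}$, then cover each $f^{-1}(V_j)$ by strongly noetherian affinoids $U_{jk}$; since pullback commutes with the restriction $\O_X|_{U_{jk}}$, the computation above shows $(f^*\F)|_{U_{jk}}\simeq\widetilde{M_j\otimes_{\O_Y(V_j)}\O_X(U_{jk})}$, which is coherent, and coherence is by definition a local condition. The only mild subtlety — and the part I would be most careful about — is checking that the presheaf tensor product requires no sheafification, i.e.\ that $V\mapsto \O_Y(V)\otimes_A M$ is already a sheaf on rational subsets and that its pullback presheaf is already separated and a sheaf; this is where finiteness of $M$ over the noetherian ring $A$ (so that $-\otimes_A M$ commutes with the relevant limits defining the sheaf condition, by Theorem~\ref{thm:huber-coherent-sheaves}(\ref{thm:huber-coherent-sheaves-1})) is essential, and it is the one place an argument rather than a formality is needed.
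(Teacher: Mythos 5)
Your proposal is correct, but it takes a genuinely different route from the paper. The paper's proof is a three-line formal argument: by noetherianness of $A$ choose a finite presentation $A^n \to A^m \to M \to 0$; the claim $f^*\widetilde{M}\simeq\widetilde{M\otimes_A B}$ is obvious for free modules; and since both $f^*\widetilde{(-)}$ and $\widetilde{(-)\otimes_A B}$ are right exact (the former by right-exactness of $f^*$, the latter by exactness of $\widetilde{(-)}$, i.e.\ Theorem~\ref{thm:huber-coherent-sheaves}(\ref{thm:huber-coherent-sheaves-1}), combined with right-exactness of $-\otimes_A B$), the natural comparison map is an isomorphism of cokernels. You instead compute the presheaf pullback explicitly on rational subsets and observe that the resulting presheaf $U\mapsto M\otimes_A\O_X(U)$ is precisely $\widetilde{M\otimes_A B}$, which is already a sheaf by the results of Kedlaya cited at the start of Section~\ref{section:sheaves}, so no sheafification occurs. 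Your computation is sound, but it carries some site-theoretic overhead that the paper avoids entirely: you need to justify computing the colimit over \emph{rational} opens $V\supseteq f(U)$ only (for non-rational $V$ the identity $\widetilde{M}(V)=\O_Y(V)\otimes_A M$ is not immediate, so one should either invoke cofinality/comparison of sites for the basis of rational subsets, or simply note that sheafification is determined by the restriction of the presheaf to a basis), and you need the standard fact that the presheaf-level tensor product has the same sheafification as the sheaf-level one. You correctly flag this as the delicate point. The trade-off: your argument is more explicit and identifies the sections on every rational subset directly, whereas the paper's presentation argument is shorter and pushes all the sheaf-theoretic content into the single input that $\widetilde{(-)}$ is an exact functor landing in sheaves.
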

\begin{proof}
    Clearly, (\ref{lemma:pullback-coherent-1}) follows from (\ref{lemma:pullback-coherent-2}). To prove (\ref{lemma:pullback-coherent-2}), we use noetherianness of $A$ to find a partial resolution
    \begin{equation}\label{eqn:partial-resolution}
    A^n \to A^m \to M \to 0.
    \end{equation}
    The claim is clear when $M=A^n$, so the general case follows from (\ref{eqn:partial-resolution}), exactness of $\widetilde{(-)}$, and right exactness of $f^*$.
\end{proof}

\section{Regular closed immersions}\label{section:immersion}

In this section, we first recall the notion of  Zariski-closed subspaces of locally noetherian analytic adic spaces. Then we discuss the theory of lci subspaces and, in particular, effective Cartier divisors. For rigid-analytic spaces, (a more general) theory of lci morphisms is developed in \cite{Guo-Li}.

\begin{defn}\label{defn:zariski-closed} A morphism $i\colon X \to Y$ of locally noetherian analytic adic spaces is a {\it Zariski-closed immersion} if $i$ is a homeomorphism of $X$ onto a closed subset of $Y$, the map $\O_Y \to i_*\O_X$ is surjective, and the kernel $\mathcal I\coloneqq \ker (\O_Y \to i_*\O_X)$ is coherent.
\end{defn}

We refer to \cite[Appendix B.6]{quotients} for a detailed discussion of this notion (studied under the name of closed immersions). In particular, we point out \cite[Cor.\,B.6.9]{quotients} that guarantees that a Zariski-closed subspace of a strongly noetherian Tate affinoid $X=\Spa(A, A^+)$ is a (strongly noetherian Tate) affinoid. Furthermore, Zariski-closed subspaces of $X$ are parametrized by the ideals $I\subset A$. \smallskip

Before we discuss the notion of lci immersions, we show that Definition~\ref{defn:zariski-closed} is compatible with the definition of Zariski-closed subsets from \cite[Def.\,5.7]{Scholze-diamond} in an appropriate sense. To make this precise, we  need to use the diamondification functor from \cite[Def.~15.5]{Scholze-diamond}. 

\begin{lemma}\label{lemma:different-ZC} Let $X=\Spa(A, A^+)$ be a strongly noetherian Tate affinoid over $\Spa(\Q_p, \Z_p)$, let $Z\hookrightarrow X$ be a Zariski-closed immersion (in the sense of Definition~\ref{defn:zariski-closed}), and let $Y=\Spa(R, R^+)$ be an affinoid perfectoid space with a morphism $Y \to X$. Then the fiber product $(Z')^\diam \coloneqq Z^\diam \times_{X^\diam} Y^\diam$ is represented a Zariski-closed perfectoid subspace of $Y^\diam$ (in the sense of \cite[Def.\,5.7]{Scholze-diamond}).
\end{lemma}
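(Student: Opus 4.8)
The plan is to unwind both notions of Zariski-closed subspace down to the level of rings and check that they match after the perfectoid base change. On the algebraic side, by \cite[Cor.\,B.6.9]{quotients} the immersion $Z \hookrightarrow X$ corresponds to an ideal $I \subset A$, so that $Z = \Spa(A/I, (A/I)^+)$ with $(A/I)^+$ the integral closure of the image of $A^+$. On the perfectoid side, \cite[Def.\,5.7]{Scholze-diamond} says that a closed subspace of $Y = \Spa(R, R^+)$ is cut out by an ideal $J \subset R$ such that $R/\overline{J}$ (the quotient by the closure, equipped with the appropriate ring of integral elements) is perfectoid and $Y' = \Spa(R/\overline{J}, \ldots)$ is the associated affinoid perfectoid. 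So the content of the lemma is: the fiber product $Z' = Z \times_X Y$, which a priori is just an analytic adic space, is represented by such a perfectoid quotient of $R$, and agrees with the Zariski-closed subspace attached to the ideal $J \coloneqq I \cdot R \subset R$.

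First I would reduce to the affinoid computation. Since $Z \hookrightarrow X$ is a Zariski-closed immersion, it is in particular affinoid (as recalled after Definition~\ref{defn:zariski-closed}), $Z = \Spa(A/I, C)$ where $C$ is the integral closure of the image of $A^+$ in $A/I$. The fiber product in adic spaces is then $Z' = \Spa(D, D^+)$ where $D = (A/I) \widehat{\otimes}_A R = R/\overline{IR}$ (completed tensor product; since $A/I$ is a quotient of $A$ by a finitely generated ideal and everything is Tate, this completed tensor product is just the quotient of $R$ by the closure of $IR$), and $D^+$ is the integral closure of the image of $R^+ \otimes C$. Here I would invoke that $A \to A/I$ has a finitely generated kernel (noetherianness of $A$) to identify the completed tensor product with an honest topological quotient. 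The key input is then that $D = R/\overline{IR}$ is perfectoid: this is exactly \cite[Thm.\,on Zariski-closed subspaces in Section 5]{Scholze-diamond} (or the relevant proposition in \cite[Section II]{Scholze-Weinstein} / Bhatt's lecture notes), which states that for an affinoid perfectoid $(R, R^+)$ and any ideal $J \subset R$, the quotient $R/\overline{J}$ with its natural topology and integral closure is again affinoid perfectoid, and $\Spa$ of it is a Zariski-closed subset of $Y$ in the sense of \cite[Def.\,5.7]{Scholze-diamond}.

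Next I would check that the two rings of integral elements agree, i.e.\ that $D^+$ as computed from the adic fiber product equals the ring of integral elements prescribed by \cite[Def.\,5.7]{Scholze-diamond}. Both are characterized as the integral closure (equivalently, in the perfectoid setting, the set of power-bounded elements, since a perfectoid Tate ring has a canonical choice) of the image of $R^+$ in $D$; so once $D$ is known to be perfectoid this is automatic, or follows from the universal property of the fiber product together with the fact that $Y' \to Y$ and $Z' \to Z$ are both the ``integral closure'' quotients. Finally I would assemble: $Z' = \Spa(D, D^+)$ is perfectoid, sits inside $Y$ as the vanishing locus of $IR$, which is the definition of a Zariski-closed perfectoid subspace; and the compatibility with the map $Z \times_X Y$ is built into the construction since we formed $D$ precisely as the completed base change.

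The main obstacle I anticipate is the bookkeeping around topologies and integral closures in the completed tensor product: one must be careful that $(A/I) \widehat{\otimes}_A R$ really is $R/\overline{IR}$ with the quotient topology (this uses that $I$ is finitely generated and that $R$ is Tate, so that $IR$ is already ``almost closed'' and the completed tensor product does not introduce extra completion beyond closing up $IR$), and that the ring of integral elements $D^+$ produced by the adic-space fiber product coincides with the one in \cite[Def.\,5.7]{Scholze-diamond} rather than being some a priori larger or smaller subring. Invoking that perfectoid quotients of affinoid perfectoids behave well (the cited structural result from \cite{Scholze-diamond}) is what makes this go through cleanly; without it, verifying directly that $R/\overline{IR}$ is perfectoid would be the real work, but here we are entitled to quote it. The rest is formal manipulation of fiber products of affinoid adic spaces.
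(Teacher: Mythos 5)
Your reduction to the ring-level statement is where the argument breaks down, for two related reasons. First, the key input you want to quote --- that for an affinoid perfectoid $(R,R^+)$ and an arbitrary ideal $J\subset R$ the quotient $R/\overline{J}$ with the quotient topology is again perfectoid --- is not a theorem of \cite{Scholze-diamond}. What Section 5 of that paper actually proves is that the Zariski-closed \emph{subset} $V(J)\subset \Spa(R,R^+)$ carries a canonical structure of affinoid perfectoid space $\Spa(R_Z,R_Z^+)$ with $R\to R_Z$ having \emph{dense} image; whether this map is surjective with kernel $\overline{J}$ (i.e., whether ``Zariski closed'' implies ``strongly Zariski closed'') was left open there and is a genuinely deep later theorem (Bhatt--Scholze, via perfectoidization). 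So the identification $\mathcal{O}(Z')=R/\overline{IR}$ is not something you are entitled to, and it is also not needed: Definition 5.7 of \cite{Scholze-diamond} only requires $Z'$ to be affinoid perfectoid with underlying set the vanishing locus of an ideal, not that its ring be a quotient of $R$. Second, even writing $Z'=\Spa(D,D^+)$ with $D=(A/I)\widehat{\otimes}_A R$ presupposes that $D$ is sheafy, which is essentially the point at issue (the paper explicitly flags that it is not a priori clear that $Z'$ is perfectoid); the fiber product of adic spaces is not automatically computed by $\Spa$ of a completed tensor product when the resulting Huber pair is not known to be sheafy.

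The paper's proof sidesteps both problems by never forming the quotient ring: choosing generators $I=(f_1,\dots,f_m)$ and a pseudo-uniformizer $\varpi$, one has $Z\sim\lim_n X(|f_1|\le|\varpi|^n,\dots,|f_m|\le|\varpi|^n)$, and \cite[Prop.\,2.4.3]{SW} then gives $Z'\sim\lim_n Y(|f_1|\le|\varpi|^n,\dots,|f_m|\le|\varpi|^n)$. Each term is a rational subset of $Y$, hence affinoid perfectoid, and a $\sim$-limit of affinoid perfectoids is affinoid perfectoid; the same description exhibits $|Z'|$ as the vanishing locus of $IR$, which is exactly what \cite[Def.\,5.7]{Scholze-diamond} asks for. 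To repair your argument you would need to replace the quotient-ring computation by this (or an equivalent) limit argument.
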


We note that the pre-adic space $Z'$ might not be a perfectoid space in general. For example, if we put $X \coloneqq \Spa(\mathbf{C}_p\langle T\rangle, \mathcal{O}_{\bf{C}_p}\langle T\rangle)$, $Z \coloneqq \Spa(\mathbf{C}_p, \mathcal{O}_{\bf{C}_p}) \hookrightarrow X$, the morphism $Z\hookrightarrow X$ the closed immersion of the origin into the closed unit disc, and $Y\coloneqq \Spa(\mathbf{C}_p\langle T^{ 1/p^\infty}\rangle, \mathcal{O}_{\bf{C}_p}\langle T^{ 1/p^\infty}\rangle)$, then $Z'\coloneqq Z\times_X Y$ is not an affinoid perfectoid space because $T^{1/p}\in \O(Z)=\bf{C}_p\langle T^{1/p^\infty}\rangle/(T)$ is a non-trivial nilpotent. Nevertheless, the diamond $(Z')^\diam \simeq \Spa(\mathbf{C}_p, \mathcal{O}_{\bf{C}_p})^\diam$ is representable by an affinoid perfectoid space. Furthermore, Lemma~\ref{lemma:different-ZC} guarantees that the diamond $(Z')^\diam$ is always representable by an affinoid perfectoid space.

\begin{proof}
    By \cite[Cor.\,B.6.9]{quotients}, there is a finitely generated ideal $I\subset A$ such that $Z=\Spa(A/I, (A^+/I\cap A^+)^c)$. Choose some generators $I=(f_1, \dots, f_m)$ and a pseudo-uniformizer $\varpi\in A^+$. Then we have
    \[
    Z \sim \lim_n X\left(|f_1| \leq |\varpi|^n, \dots, |f_m| \leq |\varpi|^n\right),
    \]
    where $\sim$ stands for the $\sim$-limit in the sense of \cite[Def.\,2.4.2]{H3} or \cite[Def.\,2.4.1]{SW}. Then the proof of \cite[Prop.~2.4.5]{SW} implies that $Z^\diam \xr{\sim} \lim_n \Big(X\left(|f_1| \leq |\varpi|^n, \dots, |f_m| \leq |\varpi|^n\right)^\diam\Big)$. Since diamondification commutes with fiber products (see \cite[Prop.~6.1.6(6)]{almost-coherent}), we conclude that 
    \begin{equation}\label{eqn:limit-diamonds}
    (Z')^\diam \simeq \lim_n \Big(Y\left(|f_1| \leq |\varpi|^n, \dots, |f_m| \leq |\varpi|^n\right)^\diam\Big).
    \end{equation}
    Now we note that each diamond
    \[
    Y(|f_1| \leq |\varpi|^n, \dots, |f_m| \leq |\varpi|^n)^\diam \simeq Y^\diam(|f_1| \leq |\varpi|^n, \dots, |f_m| \leq |\varpi|^n)
    \]
    is represented by an affinoid perfectoid space because it is a rational subdomain inside an affinoid perfectoid space $Y^\diam$. Therefore, \cite[Prop.~6.5]{Scholze-diamond} and Equation~(\ref{eqn:limit-diamonds}) imply that $(Z')^\diam$ is represented by an affinoid perfectoid space defined by the equations $|f|=0 \in Y$ for $f\in IR$. In particular, it is represented by a Zariski-closed immersion (see \cite[Def.~5.7(i)]{Scholze-diamond})
\end{proof}

Now we concentrate on a particular class of Zariski-closed immersions:

\begin{defn}\label{defn:lci-immersion} A Zariski-closed immersion $i\colon X \hookrightarrow Y$ of strongly noetherian Tate affinoids is a {\it regular immersion of pure codimension $c$} if the ideal of immersion $\cal{I}(Y) \subset \cal{O}_Y(Y)$ is generated by a regular sequence $(g_{i,1}, \dots, g_{i,c}) \subset \cal{O}_Y(Y)$.

A Zariski-closed immersion $i\colon X \hookrightarrow Y$ is an {\it lci immersion (of pure codimension $c$)} if there is an open affinoid covering $Y=\sqcup_{i\in I} U_i$ by strongly noetherian Tate affinoids such that the base change $X_{U_i} \hookrightarrow U_i$ is a regular immersion (of pure codimension $c$) for every $i\in I$.

A Zariski-closed immersion $i\colon X \hookrightarrow Y$ is an {\it effective Cartier divisor} if it an lci immersion of pure codimension $1$.
\end{defn}

\begin{rmk}\label{rmk:eff-cartier-usual} We note that a Zariski-closed immersion $i\colon X \hookrightarrow Y$ is an effective Cartier divisor if and only if the ideal sheaf $\cal{I}_X$ is an invertible $\O_Y$-module. This description is often more convenient in practice.
\end{rmk}

Now we begin verifying some basic properties of lci immersions:

\begin{lemma}\label{lemma:base-change-general} Let $Y=\Spa(A, A^+)$ be a strongly noetherian Tate affinoid, let $i\colon X\hookrightarrow Y$ be a regular immersion of pure codimension $c$, and let $U=\Spa(A_U, A_U^+)\subset Y$ be an open affinoid. Then the base change $i_T\colon X_U \hookrightarrow U$ is also a regular immersion of pure codimension $c$.
\end{lemma}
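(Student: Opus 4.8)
The plan is to reduce the statement to a purely algebraic fact about regular sequences and base change to rational localizations. Since $i\colon X \to Y$ is a regular immersion of pure codimension $c$, by definition the ideal $I \coloneqq \mathcal{I}(Y) \subset A$ is generated by a regular sequence $(g_1, \dots, g_c)$ in $A$. Write $U = \Spa(A_U, A_U^+)$, so that $A_U$ is a rational localization of $A$; by Theorem~\ref{thm:huber-coherent-sheaves} and Lemma~\ref{lemma:pullback-coherent} the base change $X_U \to U$ is the Zariski-closed immersion corresponding to the ideal $I A_U = (g_1, \dots, g_c) A_U \subset A_U$ (using that pullback of the coherent ideal sheaf $\widetilde{I}$ is $\widetilde{I \otimes_A A_U}$ and that $I \otimes_A A_U \to A_U$ has image $I A_U$, with the map being injective since $A \to A_U$ is flat). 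So the whole content is to show that the images $(g_1, \dots, g_c)$ of the $g_i$ in $A_U$ still form a regular sequence, and that the quotient $A_U/I A_U$ is nonzero over each connected component in the appropriate sense so that "pure codimension $c$" is preserved.

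The key step is therefore: \textbf{if $A \to A_U$ is a flat ring map and $(g_1, \dots, g_c)$ is a regular sequence in $A$, then its image in $A_U$ is a regular sequence.} This is a standard commutative-algebra fact — flatness implies that multiplication by $g_{j+1}$ on $A/(g_1,\dots,g_j) \otimes_A A_U = A_U/(g_1,\dots,g_j)A_U$ is injective because it is obtained by applying the exact functor $-\otimes_A A_U$ to the injection $g_{j+1}\colon A/(g_1,\dots,g_j) \hookrightarrow A/(g_1,\dots,g_j)$. The flatness of $A \to A_U$ for a rational localization of a strongly noetherian Tate ring is exactly \cite[Thm.\,1.4.14]{KedAr} (equivalently, it is what makes $\widetilde{(-)}$ exact in Theorem~\ref{thm:huber-coherent-sheaves}\ref{thm:huber-coherent-sheaves-1}). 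One subtlety is the degenerate possibility that $I A_U = A_U$, i.e.\ $X_U = \emptyset$; in that case the statement about regular immersions is vacuously satisfied on that component, and this is consistent with "pure codimension $c$" since we only require the sequence to generate the ideal. For the pure-codimension claim proper, one notes that $A_U$ is again strongly noetherian Tate, the images of the $g_i$ generate $I A_U$, and they form a regular sequence by the above, which is precisely the definition of a regular immersion of pure codimension $c$ applied to $i_U$.

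Concretely I would organize the proof as: (1) recall from the discussion after Definition~\ref{defn:zariski-closed} (and \cite[Cor.\,B.6.9]{quotients}) that $X = \Spa(A/I, (A^+/I\cap A^+)^c)$ with $I = (g_1,\dots,g_c)$, and that $X_U$ is the Zariski-closed subspace of $U$ cut out by $I A_U$ — here I invoke Lemma~\ref{lemma:pullback-coherent}\ref{lemma:pullback-coherent-2} to identify the pulled-back ideal sheaf as $\widetilde{I \otimes_A A_U}$, and the fact that $A \to A_U$ is flat to see $I \otimes_A A_U \xrightarrow{\sim} IA_U$; (2) invoke flatness of $A \to A_U$ (\cite[Thm.\,1.4.14]{KedAr}) to conclude that the images of $g_1, \dots, g_c$ in $A_U$ form a regular sequence, by the inductive tensoring argument above; (3) conclude that $i_U \colon X_U \to U$ is a regular immersion of pure codimension $c$ directly from Definition~\ref{defn:lci-immersion}, since $A_U$ is strongly noetherian Tate, $X_U$ is Zariski-closed in $U$, and its ideal is generated by the regular sequence $(g_1,\dots,g_c)$.

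The main obstacle — and really the only non-bookkeeping point — is making sure the flatness input is correctly applied: one needs that rational localizations of strongly noetherian Tate rings are flat, which is genuinely a theorem (Kedlaya–Liu, or equivalently encoded in the exactness of $\widetilde{(-)}$ in Theorem~\ref{thm:huber-coherent-sheaves}), not a formality. Everything else is the elementary preservation of regular sequences under flat base change. I expect the proof to be about four lines once the setup is unwound; I would write it as: reduce via \cite[Cor.\,B.6.9]{quotients} and Lemma~\ref{lemma:pullback-coherent} to the algebraic statement, cite \cite[Thm.\,1.4.14]{KedAr} for flatness, and finish with the standard Tor/tensoring argument for regular sequences.
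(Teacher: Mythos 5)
Your proposal is correct and follows essentially the same route as the paper's proof: identify the ideal of the base change as $IA_U$ (the paper cites \cite[Lemma B.6.7]{quotients} where you use Lemma~\ref{lemma:pullback-coherent} plus flatness), invoke flatness of $A \to A_U$ for a rational localization, and conclude via the standard fact that regular sequences are preserved under flat base change (the paper's citation of \cite[\href{https://stacks.math.columbia.edu/tag/00LM}{Tag 00LM}]{stacks-project} is exactly your inductive tensoring argument). Your extra remark on the degenerate case $IA_U = A_U$ is a point the paper glosses over, but it does not change the substance of the argument.
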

\begin{proof}
    We first note that \cite[Cor.\,B.6.9]{quotients} implies that $\O_X(X)=A/I$ for an ideal $I\subset A$. Then \cite[Lemma B.6.7]{quotients} guarantees that the ideal of $i_T$ is equal to the ideal $IA_U\subset A_U$. Finally, the fact that $IA_U\subset A_U$ is generated by a regular sequence of length $c$ follows from flatness of $A \to A_U$ (see \cite[Lemma B.4.3]{quotients}) and \cite[\href{https://stacks.math.columbia.edu/tag/00LM}{Tag 00LM}]{stacks-project}.  
\end{proof}

\begin{lemma}\label{lemma:flat-base-change} Let $i\colon X \hookrightarrow Y$ be an lci immersion (of pure codimension $c$), and let $f\colon Y'\to Y$ be a flat morphism of locally noetherian analytic adic spaces. Then the base change 
\[
i'\colon X'\coloneqq Y'\times_Y X \hookrightarrow Y'
\]
is also an lci immersion (of pure codimension $c$).
\end{lemma}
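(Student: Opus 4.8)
The plan is to reduce the assertion to the affinoid, regular-immersion case already handled by Lemma~\ref{lemma:base-change-general}, and then to glue. First I would observe that being an lci immersion of pure codimension $c$ is local on the target: if $\{V_j\}$ is an open affinoid cover of $Y'$ and each $X'_{V_j} \to V_j$ is a regular immersion of pure codimension $c$, then by definition $i'$ is an lci immersion of pure codimension $c$. (One should also check that $i'$ is a Zariski-closed immersion to begin with: closed immersions are stable under base change, surjectivity of $\O_{Y'} \to i'_*\O_{X'}$ and coherence of the ideal sheaf follow from Lemma~\ref{lemma:pullback-coherent} since $\I_{X'} = f^*\I_X$ when $f$ is flat — here flatness makes $0 \to f^*\I_X \to \O_{Y'} \to f^*(i_*\O_X) \to 0$ exact and identifies $f^*\I_X$ with $\ker(\O_{Y'}\to i'_*\O_{X'})$.) So it suffices to produce, locally on $Y'$, such a cover.

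Next I would choose the relevant covers. By hypothesis there is an open affinoid cover $Y = \sqcup_{i} U_i$ by strongly noetherian Tate affinoids with each $X_{U_i} \to U_i$ a regular immersion of pure codimension $c$. Pulling back along $f$, the opens $f^{-1}(U_i)$ cover $Y'$, and we may further refine $f^{-1}(U_i)$ by an open affinoid cover by strongly noetherian Tate affinoids, say $f^{-1}(U_i) = \bigcup_j V_{ij}$ with $V_{ij} = \Spa(B_{ij}, B_{ij}^+)$. Each $V_{ij}\to U_i$ is then a morphism of strongly noetherian Tate affinoids, and — crucially — it is flat, because it is the composite of the flat morphism $f^{-1}(U_i)\to U_i$ (restriction of $f$) with the open immersion $V_{ij}\hookrightarrow f^{-1}(U_i)$, and open immersions of strongly noetherian affinoids are flat (this is \cite[Lemma B.4.3]{quotients}, which is already invoked in the proof of Lemma~\ref{lemma:base-change-general}; alternatively it is immediate on rational subsets). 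So $X'_{V_{ij}} = X_{U_i}\times_{U_i} V_{ij}$ is the base change of a regular immersion of pure codimension $c$ along a flat morphism of strongly noetherian Tate affinoids.

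It then remains to show: the base change of a regular immersion $j\colon Z \to \Spa(A,A^+)$ of pure codimension $c$ along a flat morphism $\Spa(A,A^+)\to$ wait, I mean along a flat morphism $\Spa(A', A'^+)\to \Spa(A,A^+)$ of strongly noetherian Tate affinoids is again a regular immersion of pure codimension $c$. By \cite[Cor.\,B.6.9]{quotients} we have $\O_Z(Z) = A/I$ with $I = (g_1,\dots,g_c)$ a regular sequence, and by \cite[Lemma B.6.7]{quotients} the base change $Z\times_{\Spa(A,A^+)}\Spa(A',A'^+)$ is cut out by $IA' = (g_1,\dots,g_c)A' \subset A'$. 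Since $A\to A'$ is flat and $A$ is noetherian, $(g_1,\dots,g_c)$ remains a regular sequence in $A'$ by \cite[\href{https://stacks.math.columbia.edu/tag/00LM}{Tag 00LM}]{stacks-project} (permanence of regular sequences under flat base change, exactly as in the proof of Lemma~\ref{lemma:base-change-general}). This gives a regular immersion of pure codimension $c$, completing the argument.

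The main obstacle — really the only non-formal point — is the flatness bookkeeping: one must be careful that flatness of $f\colon Y'\to Y$ is inherited by the restriction $f^{-1}(U_i)\to U_i$ and then composed with the flatness of the open immersions $V_{ij}\hookrightarrow f^{-1}(U_i)$, so that the arrow of affinoid rings $A\to B_{ij}$ to which one applies Tag 00LM is genuinely flat. Everything else is a direct combination of Lemma~\ref{lemma:base-change-general} (for the local, flat-affinoid case), the locality of the lci condition on the target, and stability of Zariski-closed immersions under base change together with Lemma~\ref{lemma:pullback-coherent} for the coherence of the pulled-back ideal sheaf. The parenthetical "pure codimension $c$" variant is handled verbatim throughout, and dropping it gives the unqualified statement.
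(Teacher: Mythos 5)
Your proposal is correct and follows essentially the same route as the paper: localize to the case of a regular immersion over a strongly noetherian Tate affinoid via Lemma~\ref{lemma:base-change-general}, identify the ideal of the base change as $IA'$ using \cite[Cor.\,B.6.9, Lemma B.6.7]{quotients}, and invoke flatness of the induced map of affinoid rings together with \cite[\href{https://stacks.math.columbia.edu/tag/00LM}{Tag 00LM}]{stacks-project} to preserve the regular sequence. The extra bookkeeping you supply (refining $f^{-1}(U_i)$ by affinoids, flatness of the composite, and checking that $i'$ is a Zariski-closed immersion) is exactly what the paper leaves implicit.
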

\begin{proof}
    Lemma~\ref{lemma:base-change-general} ensures that the question is local on $X$, $Y$, and $Y'$. So we can assume that $X=\Spa(B, B^+)$, $Y=\Spa(A, A^+)$, and $Y'=\Spa(C, C^+)$ are strongly noetherian Tate affinoids, and $X \to Y$ is a regular immersion of pure codimension $c$. Then Definition~\ref{defn:lci-immersion} and \cite[Cor.\,B.6.9]{quotients} imply that $B=A/I$ for an ideal $I$ generated by a regular sequence of length $c$. Now \cite[Lemma B.6.7]{quotients} implies that it suffices to show that $IC$ is also generated by a regular sequence of length $c$. This follows from \cite[Lemma B.4.3]{quotients} and \cite[\href{https://stacks.math.columbia.edu/tag/00LM}{Tag 00LM}]{stacks-project}.
\end{proof}

\begin{rmk}\label{rmk:smooth-flat} Any smooth morphism of locally noetherian analytic adic spaces is flat by \cite[Remark B.4.7]{quotients}. In particular, Lemma~\ref{lemma:flat-base-change} holds for any smooth morphism $f\colon Y' \to Y$.
\end{rmk}

\begin{lemma}\label{lemma:base-change-lci} Let $i\colon X \hookrightarrow Y$ be an lci immersion of pure codimension $c$, and let $f\colon Y'\to Y$ be a morphism of locally noetherian analytic adic spaces. Suppose that the base change 
\[
i'\colon X'\coloneqq Y'\times_Y X \hookrightarrow Y'
\]
is an lci immersion of pure codimension $c$. Then the natural morphism
\[
f^*\cal{I}_X \to \cal{I}_{X'}
\]
is an isomorphism, where $\cal{I}_X$ and $\cal{I}_{X'}$ are the ideal sheaves of the Zariski-closed immersions $i$ and $i'$ respectively.
\end{lemma}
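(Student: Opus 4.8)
The plan is to reduce the statement to a $\mathrm{Tor}$-vanishing in commutative algebra and then feed in the hypothesis on $i'$.

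First, since the claim that $f^*\cal{I}_X\to\cal{I}_{X'}$ is an isomorphism is local on $Y'$, and since both $i$ and $i'$ are lci immersions of pure codimension $c$, I would use Lemma~\ref{lemma:base-change-general} to reduce to the case where $Y=\Spa(A,A^+)$ and $Y'=\Spa(C,C^+)$ are strongly noetherian Tate affinoids, $f$ comes from a ring map $A\to C$, and both $i$ and $i'$ are \emph{regular} immersions of pure codimension $c$ (the case $c=0$ being trivial, so we may assume $c\ge 1$). Then \cite[Cor.\,B.6.9]{quotients} gives $X=\Spa(A/I,\dots)$ with $I=(g_1,\dots,g_c)$ a regular sequence in $A$, and \cite[Lemma B.6.7]{quotients} identifies the ideal of $i'$ in $C$ with $IC$, which by hypothesis is generated by a regular sequence $(h_1,\dots,h_c)$ of $C$. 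Next, applying exactness of $\widetilde{(-)}$ (Theorem~\ref{thm:huber-coherent-sheaves}) to $0\to I\to A\to A/I\to 0$ one gets $\cal{I}_X=\widetilde{I}$, hence $f^*\cal{I}_X=\widetilde{I\otimes_A C}$ by Lemma~\ref{lemma:pullback-coherent}(2), while $\cal{I}_{X'}=\widetilde{IC}$; under the equivalence $\widetilde{(-)}$ the morphism in question becomes the multiplication map $\mu\colon I\otimes_A C\to IC$. So it suffices to show $\mu$ is an isomorphism of $C$-modules.

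Since $\mu$ is clearly surjective and factors as $I\otimes_A C\twoheadrightarrow IC\hookrightarrow C$, its kernel equals $\ker(I\otimes_A C\to C)$, which — by tensoring $0\to I\to A\to A/I\to 0$ with $C$ over $A$ — is $\mathrm{Tor}^A_1(A/I,C)$. Because $(g_1,\dots,g_c)$ is a regular sequence, the Koszul complex $K_\bullet(g_1,\dots,g_c;A)$ is a free resolution of $A/I$, so
\[
\mathrm{Tor}^A_i(A/I,C)\;\cong\;H_i\big(K_\bullet(g_1,\dots,g_c;A)\otimes_A C\big)\;=\;H_i\big(K_\bullet(\bar g_1,\dots,\bar g_c;C)\big),
\]
where $\bar g_j$ denotes the image of $g_j$ in $C$. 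Thus I would reduce to showing that this Koszul homology vanishes in positive degrees.

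The hard part — and the only place the hypothesis on $i'$ is used — is precisely this vanishing. The elements $\bar g_1,\dots,\bar g_c$ generate $IC$, which by assumption is also generated by the $C$-regular sequence $(h_1,\dots,h_c)$; in particular $\mathrm{depth}_{IC}(C)\ge c$. Invoking depth-sensitivity of the Koszul complex over a noetherian ring (equivalently, the theory of quasi-regular sequences: $IC/(IC)^2$ is then free of rank $c$ over $C/IC$, so any $c$ generators of $IC$ form a regular sequence; see, e.g., \cite[\href{https://stacks.math.columbia.edu/tag/061L}{Tag 061L}]{stacks-project}), one concludes $H_i\big(K_\bullet(\bar g_1,\dots,\bar g_c;C)\big)=0$ for all $i\ge1$. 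Hence $\mathrm{Tor}^A_1(A/I,C)=0$, $\mu$ is an isomorphism, and the lemma follows. I expect this commutative-algebra input to be the main obstacle: without the assumption that the base change $i'$ is again lci of codimension $c$, the map $\mu$ genuinely need not be injective, and the whole content of the lemma is that this hypothesis forces the relevant $\mathrm{Tor}$ group to vanish.
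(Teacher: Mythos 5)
Your reduction to the affinoid case and to the commutative-algebra claim (for $I=(g_1,\dots,g_c)$ generated by a regular sequence with $IC$ again generated by a length-$c$ regular sequence, the multiplication map $I\otimes_A C\to IC$ is an isomorphism) is exactly the paper's reduction. Where you diverge is in the proof of that claim: the paper inducts on $c$ and, in the base case $c=1$, observes that $I\otimes_A C\to IC$ is a surjection of free rank-$1$ $C$-modules, hence an isomorphism. You instead identify $\ker(I\otimes_A C\to IC)$ with $\operatorname{Tor}^A_1(A/I,C)$, compute it as Koszul homology $H_1\bigl(K_\bullet(\bar g_1,\dots,\bar g_c;C)\bigr)$ using that $K_\bullet(g_\bullet;A)$ resolves $A/I$, and kill it by depth-sensitivity since $\operatorname{depth}(IC,C)\ge c$. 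This is a legitimate and arguably more robust route: it avoids the paper's induction step (which is stated rather tersely and requires checking that the hypotheses descend through the intermediate quotients), and it makes transparent that the hypothesis on $i'$ enters only through $\operatorname{depth}(IC,C)\ge c$, i.e., through the ideal $IC$ and not through the particular generators.

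One caveat: your parenthetical alternative justification --- that quasi-regularity forces \emph{any} $c$ generators of $IC$ to form a regular sequence --- is false over non-local noetherian rings. For instance, in $k[x,y,z]$ the elements $y(1-x),\,z(1-x),\,x$ generate the ideal $(x,y,z)$, which is generated by the regular sequence $x,y,z$, yet they do not form a regular sequence in that order. What is true, and what you actually need, is that $\bar g_1,\dots,\bar g_c$ form a \emph{Koszul-regular} (equivalently here, quasi-regular) sequence, which is precisely the conclusion of depth-sensitivity and suffices for the vanishing of $H_i$ for $i\ge 1$. So keep the depth-sensitivity argument as the primary justification and drop or correct the parenthetical. (Both your proof and the paper's silently skirt the degenerate case $IC=C$; there the statement is either vacuous or handled separately, and this does not distinguish the two arguments.)
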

\begin{proof}
    Arguing as in the proof of Lemma~\ref{lemma:flat-base-change}, we reduce the question to proving the following claim: \smallskip
    
    {\it Claim: Let $A$ be a noetherian ring, $I\subset A$ an ideal generated by a regular sequence of length $c$, and $A \to B$ is a ring homomorphism such that $IB$ is still generated by a regular sequence of length $c$. Then $I\otimes_A B \to IB$ is an isomorphism.} \smallskip
    
    By induction, one can assume that $c=1$. In this case, $I=(g)\subset A$ is a free $A$-module of rank-$1$. The assumption on $B$ tells us that $gB$ is a free $B$-module of rank-$1$. Therefore, $I\otimes_A B \to IB$ is a surjection of free $B$-modules of rank-$1$. Hence it is an isomorphism. 
\end{proof}

Our next goal is to give some interesting examples of lci immersions. We also will give more examples in the next section.

\begin{lemma}\label{lemma:local-structure-smooth-smooth} Let $S$ be a locally noetherian analytic adic space, let $f_X\colon X \to S$ be a smooth morphism of pure dimension $d_X$, let $f_Y\colon Y \to S$ be a smooth morphism of pure dimension $d_Y$, and let $i\colon X \hookrightarrow Y$ be a Zariski-closed immersion of adic $S$-spaces. Then, for each point $x\in X$, there is an open affinoid $x\in U_x\subset Y$ and an \'etale morphism $h\colon U_x \to \bf{D}^{d_Y}_S$ such that there is a Cartesian diagram
\[
    \begin{tikzcd}
        U_x\cap  X \arrow[r, hook, "i|_{U_x\cap X}"] \arrow{d}{h|_{U_x\cap X}} & U_x \arrow{d}{h} \\
        \bf{D}^{d'}_S \arrow[r, hook, "j"] & \bf{D}^{d}_S,
    \end{tikzcd}
\]
where $j\colon \bf{D}^{d_X}_S \to \bf{D}^{d_Y}_S$ is the inclusion of $\bf{D}^{d_X}_S$ into $\bf{D}^{d_Y}_S$ as the vanishing locus of the first $d_Y-d_X$ coordinates.
\end{lemma}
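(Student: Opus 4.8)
\emph{Plan.} I would follow the classical recipe for straightening out a closed immersion of smooth spaces: after shrinking $Y$ around $x$, produce functions on $Y$ consisting of $c\coloneqq d_Y-d_X$ local generators of the ideal of $X$ together with lifts of \'etale coordinates on $X$, and check via the Jacobian criterion that this system defines the required \'etale chart $h$; the cartesian square then falls out formally.

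\emph{Reductions.} Since the statement is local on $Y$ at $x$ (and $X$ carries the subspace topology), I may assume that $Y=\Spa(B,B^+)$ and $S=\Spa(R,R^+)$ are strongly noetherian Tate affinoids, that $\varpi\in R$ is a pseudo-uniformizer, hence a unit of $R$ and therefore of $\O_Y$ and of $\O_X(X)$, and, by \cite[Cor.\,B.6.9]{quotients}, that $X$ is the Zariski-closed subspace attached to an ideal $I=\cal{I}_X(Y)\subset B$, so $\O_X(X)=B/I$; here $c\ge 0$ because $i$ is a closed immersion and $X,Y$ have pure relative dimensions $d_X\le d_Y$. It suffices to construct, after further shrinking $Y$ to an affinoid neighbourhood $U_x$ of $x$, elements $g_1,\dots,g_c,t_1,\dots,t_{d_X}\in B$ with (a) $(g_1,\dots,g_c)=I$; (b) images $\bar t_1,\dots,\bar t_{d_X}\in B/I$ defining an \'etale morphism $\psi\colon X\to\bf{D}^{d_X}_S$; and (c) exponents $m_i,n_j\ge 0$ such that $h\coloneqq\big(\varpi^{m_1}g_1,\dots,\varpi^{m_c}g_c,\varpi^{n_1}t_1,\dots,\varpi^{n_{d_X}}t_{d_X}\big)\colon U_x\to\bf{D}^{d_Y}_S$ is \'etale. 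Granting (a)--(c): the first $c$ components of $h$ lie in $I\O_{U_x}$, hence vanish on $U_x\cap X$, so $h\circ(i|_{U_x\cap X})$ factors through $j$ and defines the left vertical arrow; and the square is cartesian because $U_x\times_{\bf{D}^{d_Y}_S,j}\bf{D}^{d_X}_S$ is the Zariski-closed subspace of $U_x$ cut out by $h^*T_1,\dots,h^*T_c$, which by (a) and invertibility of the $\varpi^{m_i}$ has ideal $I\O_{U_x}$, i.e.\ equals $U_x\cap X$ (\cite[Lemma B.6.7]{quotients}). In particular the left vertical arrow is the base change of the \'etale morphism $h$, hence \'etale, and $j$ is the inclusion of the vanishing locus of the first $c=d_Y-d_X$ coordinates.

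\emph{Construction of the $g_i$ and $t_j$ and the Jacobian check.} Since $f_X$ is smooth, the conormal sequence is short exact and locally split,
\[
0\longrightarrow \cal{I}_X/\cal{I}_X^2 \longrightarrow i^*\Omega^1_{Y/S}\longrightarrow \Omega^1_{X/S}\longrightarrow 0,
\]
and since $f_Y,f_X$ are smooth of pure relative dimensions $d_Y,d_X$ the outer terms are locally free of ranks $d_Y,d_X$; hence $\cal{I}_X/\cal{I}_X^2$ is locally free of rank $c$. Pick $g_1,\dots,g_c\in I$ whose classes form a basis of $(\cal{I}_X/\cal{I}_X^2)_x$, hence of $\cal{I}_X/\cal{I}_X^2$ near $x$; since $(\cal{I}_X)_x$ is a finite $\O_{Y,x}$-module contained in $\m_x$ (as $x\in X$), we have $(\cal{I}_X)_x^2\subset\m_x(\cal{I}_X)_x$, so by Nakayama $g_1,\dots,g_c$ generate $(\cal{I}_X)_x$, hence $\cal{I}_X$ near $x$, giving (a). Next, by the local structure of smooth morphisms there is, after shrinking, an \'etale $\psi=(\bar t_1,\dots,\bar t_{d_X})\colon X\to\bf{D}^{d_X}_S$; lift the $\bar t_j$ to $t_j\in B$ via $B\to B/I$, and choose $m_i,n_j\gg 0$ so all entries of $h$ are power-bounded. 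As $d(\varpi^{m}a)=\varpi^{m}da$ with $\varpi^m$ a unit, $h$ is \'etale at $x$, by the Jacobian criterion for a morphism of $S$-adic spaces smooth of equal relative dimension, if and only if $\{dg_1,\dots,dg_c,dt_1,\dots,dt_{d_X}\}$ is a basis of $\Omega^1_{Y/S}$ at $x$. Restricting to $X$: in the conormal sequence the $dg_i|_X$ form a basis of $\cal{I}_X/\cal{I}_X^2$ near $x$ by the above, and the $dt_j|_X=d\bar t_j$ form a basis of $\Omega^1_{X/S}$ near $x$ since $\psi$ is \'etale, so $\{dg_i,dt_j\}$ restricts to a basis of $i^*\Omega^1_{Y/S}$ near $x$; as $\Omega^1_{Y/S}$ is locally free of rank $d_Y$ and $(\cal{I}_X)_x\subset\m_x$, a second application of Nakayama shows it is a basis of $\Omega^1_{Y/S,x}$, hence on an affinoid neighbourhood $U_x$ of $x$. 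Shrinking $U_x$ so that (a) persists gives (c), and with it the lemma.

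\emph{Main difficulty.} The genuinely nontrivial inputs are the short exactness and local splitness of the conormal sequence for a smooth morphism (forcing $\cal{I}_X/\cal{I}_X^2$ to be locally free of rank $c$) and the Jacobian criterion for \'etaleness of a morphism between $S$-adic spaces smooth of equal relative dimension. Both are standard; if not on record in this generality they reduce, by passing to an affinoid cover, to the classical statements for strongly noetherian Tate affinoid algebras. Everything else is two applications of Nakayama's lemma plus rescaling by the pseudo-uniformizer so that all functions land in the unit polydisc.
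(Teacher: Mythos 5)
Your proposal is correct and follows essentially the same route as the paper: produce generators of the ideal near $x$ that extend to a basis of $\Omega^1_{Y/S}$, rescale by a power of $\varpi$ to land in $B^+$, conclude \'etaleness of $h$ by the Jacobian criterion (Huber's Prop.\,1.6.9(iii)), and identify the fiber product via \cite[Lemma B.6.7]{quotients}. The only difference is that the paper cites Huber's Prop.\,1.6.9(ii) (plus an approximation argument) for the existence of such generators, whereas you rederive it from the split conormal sequence and Nakayama, and you fix $c=d_Y-d_X$ upfront by a rank count rather than at the end via the dimension of the relative disc.
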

\begin{proof}
    Let us denote by $\cal{I}$ the ideal sheaf of the Zariski-closed immersion $i\colon X \hookrightarrow Y$. The claim is local on $S$, so we clearly can assume that $S$ is a Tate affinoid with a pseudo-uniformizer $\varpi$. \smallskip
    
    Now \cite[Prop.\,1.6.9(ii)]{H3} implies that there exist generators 
    \[
    g_{1, x}, \dots, g_{d', x}\in \cal{I}_x
    \]
    such that the elements $\{d_{Y/S}(g_{1, x}), \dots, d_{Y/S}(g_{d', x})\} \in \Omega^1_{Y/S, x}$ can be extended to a basis of an $\O_{X, x}$-module $\Omega^1_{Y/S, x}$. Using \cite[(1.6.2)]{H3}, we can assume that the extended basis is of the form 
    \[
    \{d_{Y/S}(g_{1, x}), \dots, d_{Y/S}(g_{d', x}), d_{Y/S}(g_{d'+1, x}), \dots, d_{Y/S}(g_{d, x})\} \in \Omega^1_{Y/S, x}
    \]
    for an integer $d\geq d'$ and some elements $g_{d'+1, x}, \dots, g_{d, x}\in \O_{Y, x}$. \smallskip
     
    Using that the $\O_Y$-modules $\cal{I}$ and $\Omega^1_{Y/S}$ are coherent (see Definition~\ref{defn:zariski-closed} and \cite[Prop.~1.6.9(i)]{H3} respectively), we can find an affinoid open neighborhood $x\in U_x = \Spa(B, B^+) \subset Y$ and functions $g_1, \dots, g_{d'}\in \cal{I}(U_x)\subset B$ and $g_{d'+1}, \dots, g_{d}\in \O_Y(U_x)=B$ such that we have equality of stalks $(g_i)_x = g_{i, x}$ for any $i=1, \dots, d$, the functions $g_1, \dots, g_{d'}$ generate the ideal $\cal{I}(U_x)$, and the set $\{d_{Y/S}(g_{1}), \dots, d_{Y/S}(g_{d'}), \dots, d_{Y/S}(g_d)\} \in \Omega^1_{Y/S}(U_x)$ forms a basis of the $\O_Y(U_x)$-module $\Omega^{1}_{Y/S}(U_x)$. In particular, these assumptions imply that $X\cap U_x\subset U_x$ is the vanishing locus of the functions $g_1, \dots, g_{d'}$.  \smallskip
    
    We can simultaneously multiply $g_1, \dots, g_d$ by some power of $\varpi$ to assume that $g_i\in B^+$ and then consider the unique $\O_S(S)$-linear morphism
    \[
    h^\sharp \colon \left(\O_S(S)\langle T_1, \dots, T_d\rangle, \O_S(S)^+\langle T_1, \dots, T_d\rangle\right) \to (B, B^+)
    \]
    sending $T_i$ to $g_i$. It defines a morphism of $S$-adic spaces
    \[
    h\colon U_x \to \bf{D}^d_S.
    \]
    This morphism is \'etale by virtue of \cite[Prop.\,1.6.9(iii)]{H3} and our choice of $g_1, \dots, g_d$. Now the construction of $h$ and \cite[Lemma B.6.7]{quotients} imply that $h$ fits into the Cartesian diagram
    \begin{equation}\label{eqn:locally-smooth-smooth}
    \begin{tikzcd}
        U_x\cap  X \arrow[r, hook, "i|_{U_x\cap X}"] \arrow{d}{h|_{U_x\cap X}} & U_x \arrow{d}{h} \\
        \bf{D}^{d'}_S \arrow[r, hook, "j"] & \bf{D}^{d}_S,
    \end{tikzcd}
    \end{equation}
    where $j$ is the inclusion of $\bf{D}^{d'}_S$ into $\bf{D}^{d}_S$ as the vanishing locus of the first $d-d'$ coordinates. We are only left to show that $d=d_Y$ and $d'=d_X$. This follows from Corollary~\ref{cor:dimension-of-disc}.
\end{proof}

\begin{rmk} In general, a similar argument shows that, for any smooth morphism $f\colon X \to S$ and a point $x\in X$, there is an open $x\in U$ and an integer $d$ such that $f|_U$ factors as a composition
\[
U \xr{g} \bf{D}^d_S \xr{\pi} S,
\]
where $g$ is an \'etale map and $\pi$ is the standard projection. In particular, analytically locally on the source, any smooth morphism is relatively pure of some dimension $d$. 
\end{rmk}

\begin{cor}\label{cor:smooth-smooth-lci} In the notation of Lemma~\ref{lemma:local-structure-smooth-smooth}, $i$ is an lci immersion of pure codimension $d_Y-d_X$. In particular, a section $s\colon S \to X$ of a separated smooth morphism $f\colon X \to S$ (of pure relative dimension $d$) is an lci immersion of (pure codimension $d$).
\end{cor}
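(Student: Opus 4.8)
The plan is to deduce both assertions from Lemma~\ref{lemma:local-structure-smooth-smooth} together with the base-change stability results already established. For the first assertion, Lemma~\ref{lemma:local-structure-smooth-smooth} produces, for each $x\in X$, an open affinoid $U_x\subset Y$ and an \'etale map $h\colon U_x\to\bf{D}^{d_Y}_S$ fitting into a cartesian square whose bottom row is the linear inclusion $j\colon\bf{D}^{d_X}_S\hookrightarrow\bf{D}^{d_Y}_S$ as the vanishing locus of the first $d_Y-d_X$ coordinates. The map $j$ is visibly a regular immersion of pure codimension $d_Y-d_X$: its ideal in $\O_S(S)\langle T_1,\dots,T_{d_Y}\rangle$ is generated by $(T_1,\dots,T_{d_Y-d_X})$, which is a regular sequence since $\O_S(S)\langle T_1,\dots,T_{d_Y}\rangle$ is a (Tate) polynomial algebra over a noetherian ring, so one can quotient by the $T_i$ one at a time without introducing zero-divisors. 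Since $h$ is \'etale, hence flat (Remark~\ref{rmk:smooth-flat}), Lemma~\ref{lemma:flat-base-change} applies to the cartesian square and shows that $i|_{U_x\cap X}\colon U_x\cap X\to U_x$ is an lci immersion of pure codimension $d_Y-d_X$; shrinking if necessary we may take the $U_x$ themselves to be strongly noetherian Tate affinoids on which it is a \emph{regular} immersion of that codimension. As $x$ ranges over $X$ (and since $i$ is already known to be Zariski-closed), the affinoids $U_x$ together with the complement of $X$ in $Y$ give the open covering required by Definition~\ref{defn:lci-immersion}, so $i$ is an lci immersion of pure codimension $d_Y-d_X$.

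For the second assertion, let $f\colon X\to S$ be separated and smooth of pure relative dimension $d$ and let $s\colon S\to X$ be a section, so $f\circ s=\mathrm{id}_S$. Taking $Y=X$, $f_Y=f$ of pure dimension $d$, and for the source the identity $f_X=\mathrm{id}_S\colon S\to S$, which is smooth of pure dimension $0$, we are in the situation of Lemma~\ref{lemma:local-structure-smooth-smooth} and Corollary's first assertion \emph{provided} $s$ is a Zariski-closed immersion. This is where separatedness enters: $s$ is the base change of the diagonal $\Delta_f\colon X\to X\times_S X$ along the morphism $(\mathrm{id}_X,s\circ f)\colon X\to X\times_S X$ (equivalently, $S\cong X\times_{X\times_S X}X$ where one leg is $\Delta_f$ and the other is $(s\circ f,\mathrm{id}_X)$), and $\Delta_f$ is a closed immersion precisely because $f$ is separated; a closed immersion pulls back to a closed immersion, and coherence of the ideal sheaf is preserved, so $s$ is a Zariski-closed immersion in the sense of Definition~\ref{defn:zariski-closed}. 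Granting this, the first part of the corollary gives that $s$ is an lci immersion of pure codimension $d-0=d$.

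The main obstacle is the verification that the section $s$ is genuinely a Zariski-closed immersion, i.e.\ that separatedness of $f$ forces the diagonal $\Delta_f$ to be a closed immersion \emph{in the strong sense of Definition~\ref{defn:zariski-closed}} (homeomorphism onto a closed subset, surjectivity of $\O\to s_*\O$, \emph{and} coherence of the kernel), not merely a locally closed or monomorphic one. In the adic setting one must be slightly careful: separatedness gives that $\Delta_f$ is a closed immersion of the underlying topological/adic spaces, and one needs the ideal sheaf to be coherent, which for a closed immersion of locally noetherian analytic adic spaces follows from the affinoid-local description in \cite[Cor.\,B.6.9]{quotients} (every topologically closed subspace of a strongly noetherian Tate affinoid is cut out by an ideal of the noetherian ring, hence has coherent ideal sheaf). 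Once this structural point is in hand, everything else is a formal combination of Lemma~\ref{lemma:local-structure-smooth-smooth}, Lemma~\ref{lemma:flat-base-change}, and the triviality that the coordinate ideal $(T_1,\dots,T_c)$ in a Tate polynomial algebra over a noetherian ring is a regular sequence.
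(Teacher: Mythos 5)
Your proposal is correct and follows essentially the same route as the paper: the first claim is obtained by combining Lemma~\ref{lemma:local-structure-smooth-smooth} with Lemma~\ref{lemma:flat-base-change} and Remark~\ref{rmk:smooth-flat} (you merely spell out the intermediate observation that the coordinate inclusion $j$ is a regular immersion), and the second claim is reduced to the first by exhibiting the section as the base change of the diagonal $\Delta_{X/S}$, which is a Zariski-closed immersion by separatedness, with Zariski-closed immersions stable under pullback. The paper's proof cites \cite[Cor.\,B.7.4, Cor.\,B.6.10]{quotients} for exactly the structural point you flag as the "main obstacle," so no gap remains.
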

\begin{proof}
    The first claim directly from Lemma~\ref{lemma:local-structure-smooth-smooth}, Lemma~\ref{lemma:flat-base-change}, and Remark~\ref{rmk:smooth-flat}. The ``in particular'' part follows from the previous claim if we can show that a section of a separated morphism is a Zariski-closed immersion. This, in turn, follows from the pullback diagram
    \[
    \begin{tikzcd}
    S \arrow{d}{s}\arrow{r}{s} & X\times_S S \arrow{d}{\rm{id}_X\times s}\\
    X\arrow[r, hook, "\Delta_{X/S}"] & X\times_S X,
    \end{tikzcd}
    \]
    the fact that $\Delta_{X/S}$ is a Zariski-closed immersion (see \cite[Cor.\,B.7.4]{quotients}), and the fact that Zariski-closed immersions are closed under pullbacks (see \cite[Cor.\,B.6.10]{quotients}).
\end{proof}

\section{Relative analytification}\label{section:analytification}

In this section, we consider the functor of relative analytification and show some of its basic properties. For the rest of this section, we fix a strongly noetherian Tate affinoid space $S=\Spa(A, A^+)$. \smallskip

We recall that the universal property of affine schemes (see \cite[\href{https://stacks.math.columbia.edu/tag/01I1}{Tag 01I1}]{stacks-project}) says that 
\[
\rm{Map}_{\bf{LRS}}(S, \Spec A)=\rm{Map}_{\bf{Rings}}(\O_S(S), A)=\rm{Map}_{\bf{Rings}}(A, A),
\]
where $\bf{LRS}$ is the category of locally ringed spaces. In particular, the identity morphism $\rm{id}_A$ defines a morphism of locally ringed spaces 
\[
c_S \colon S=\Spa(A, A^+) \to \Spec A.
\]

The main goal of this section is to study ``analytification'' along this map. More precisely, we give the following definition: 

\begin{defn}\label{defn:relative-analytification} A {\it relative analytification} of a locally finite type $A$-scheme $X$ is an adic $S$-space $X^{\an/S}\to S$ with a morphism of locally ringed $\Spec A$-spaces $c_{X/S}\colon X^{\an/S} \to X$ such that, for every adic $S$-space $U$, $c_{X/S}$ induces a bijection
\[
\rm{Map}_{\bf{Adic}_{/S}}(U, X^{\an/S}) \simeq \rm{Map}_{\bf{LRS}_{/\Spec A}}(U, X).
\]
\end{defn}

\begin{rmk} Clearly, a relative analatytification is unique if it exists. Furthermore, \cite[Prop.\,3.8]{H1} implies that it always exists for a locally finite type $A$-scheme $X$ and $X^{\an/S}$ is locally of finite type over $S$ in this case.
\end{rmk}

\begin{rmk}\label{rmk:analytification-proper}(\cite[Lemma 5.7.3]{H3}) If $X$ is a proper $\O_S(S)$-scheme, then $X^{\an/S}$ is a proper adic $S$-space. 
\end{rmk}

Our first goal is to show that the analytification morphism $c_{X/S}$ is flat. Before doing this, we need to examine some examples in more detail. 

\begin{exmpl}\label{example:analytification-of-affine-space} Let $\varpi\in A^+$ be a pseudo-uniformizer, let $d$ be a positive integer, let $\bf{A}^d_A \coloneqq \Spec A[T_1, \dots, T_d]$ be the relative affine space over $A$, and let the adic space 
\[
\mathbf{D}^d_S(|\frac{1}{\varpi^n}|) \coloneqq \Spa(A\langle \varpi^n\cdot T_1, \dots, \varpi^n\cdot T_d \rangle, A^+\langle \varpi^n \cdot T_1, \dots, \varpi^n\cdot T_d\rangle)
\]
be the relative closed disk of ``radius $|\frac{1}{\varpi^n}|$''. Then there is an isomorphism of adic $S$-spaces
\[
\bf{A}_A^{d, \an/S} \simeq \bigcup_{n\geq 0} \bf{D}^d_S\big(|\frac{1}{\varpi^n}|\big). 
\]
\end{exmpl}
\begin{proof}
    Since any adic space space is locally affinoid, it suffices to show that that there is a functorial bijection
    \[
    \rm{Map}_{\bf{Adic}_{/S}}\Big(U, \bigcup_{n\geq 0}\bf{D}^d_S\big(|\frac{1}{\varpi^n}|\big) \Big) \simeq \rm{Map}_{\bf{LRS}_{/\Spec A}}\big(U, \bf{A}^d_A\big)
    \]
    for any {\it affinoid} adic $S$-space $U=\Spa(B, B^+)$. Now \cite[\href{https://stacks.math.columbia.edu/tag/01I1}{Tag 01I1}]{stacks-project} implies that 
    \[
    \rm{Map}_{\bf{LRS}_{/\Spec A}}\big(U, \bf{A}^d_A\big) \simeq \O_U(U)^d \simeq B^d.
    \]
    Therefore, it suffices to construct a functorial isomorphism $\rm{Map}_{\bf{Adic}_{/S}}\Big(U, \bigcup_{n\geq 0}\bf{D}^d_S\big(|\frac{1}{\varpi^n}|\big) \Big) \simeq B^d$. Using that each $U$ is quasi-compact, each $\bf{D}^d_S(|\frac{1}{\varpi^n}|)$ is (abstractly) isomorphic to the closed unit disk over $S$, and the universal property of the closed unit disc (see \cite[Lem.~3.5]{H1}), we conclude that 
    \begin{align*}
    \rm{Map}_{\bf{Adic}_{/S}}\Big(U, \bigcup_{n\geq 0}\bf{D}^d_S\big(|\frac{1}{\varpi^n}|\big) \Big) & \simeq \colim_{n\geq 0} \rm{Map}_{\bf{Adic}_{/S}}\Big(U, \bf{D}^d_S\big(|\frac{1}{\varpi^n}|\big) \Big) \\
    & \simeq \colim \big((B^+)^d \xr{\cdot \varpi} (B^+)^d \xr{\cdot \varpi} \dots\big) \\
    & \simeq (B^+)^d\big[\frac{1}{\varpi}\big] \simeq B^d,
    \end{align*}
    where the last isomorphism uses that $B^+[\frac{1}{\varpi}] \simeq B$ since $\varpi$ is a pseudo-uniformizer in $B$. This finishes the proof. 
\end{proof}

\begin{exmpl}\label{example:affinoid-analytification} Let $\varpi\in A^+$ be a pseudo-uniformizer, let $d$ be a positive integer, let $X \coloneqq \rm{V}(f_1, \dots, f_s)\subset \bf{A}^d_A$ be the vanishing locus of polynomials $f_1, \dots, f_s\in A[T_1, \dots, T_d]$, and let $\rm{V}^{\an}(f_1, \dots, f_s)\subset \bf{A}^{d, \an/S}_S$ be the vanishing locus of the same functions in the relative analytic affine space. Then there is an isomorphism of adic $S$-spaces
\[
X^{\an/S} \simeq \rm{V}^{\an}(f_1, \dots, f_s)\subset \bf{A}^{d, \an/S}_S. 
\]
\end{exmpl}
\begin{proof}
    Using Example~\ref{example:analytification-of-affine-space} and \cite[\href{https://stacks.math.columbia.edu/tag/01HP}{Tag 01HP}]{stacks-project}, we conclude that, for any affinoid adic $S$-space $U=\Spa(B, B^+)$, the sets $\rm{Map}_{\bf{Adic}_{/S}}\big(U, \rm{V}^{\an}(f_1, \dots, f_s)\big)$ and $\rm{Map}_{\bf{LRS}_{/\Spec A}}\big(U, X\big)$ can be functorailly identified with $d$-tuples $h_1, \dots, h_d\in B$ such that 
    \[
    f_1(h_1, \dots, h_d)=f_2(h_1, \dots, h_d) = \dots = f_s(h_1, \dots, h_d)=0,
    \]
    where the equality takes place in the ring $B$. In particular, this implies that $X^{\an/S} \simeq \rm{V}^{\an}(f_1, \dots, f_s)$. 
\end{proof}

\begin{rmk} We note that Example~\ref{example:affinoid-analytification} and a standard gluing argument could be used to show the existence of relative analytification for any locally finite type $A$-scheme $X$. This reproves \cite[Prop.\,3.8]{H1}. 
\end{rmk}

\begin{lemma}\label{lemma:flatness} Let $X$ be a locally finite type $A$-scheme, then the analytification morphism $c_{X/S} \colon (X^{\an/S}, \O_{X^{\an/S}}) \to (X, \O_X)$ is a flat morphism of locally ringed spaces, i.e., for any point $x\in X^{\an/S}$, the natural morphism $c_{X/S, x}^\sharp \colon \O_{X, c_{X/S}(x)} \to \O_{X^{\an/S}, x}$ is flat. 
\end{lemma}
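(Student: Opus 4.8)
The plan is to reduce the statement to a purely local assertion about the analytification map on affinoids, and then to identify the relevant local rings explicitly. Since flatness of $c_{X/S}$ is checked stalkwise, and the formation of stalks is local on both $X$ and $X^{\an/S}$, I would first cover $X$ by affine opens $\Spec R$ with $R$ a finite type $A$-algebra; by the universal property in Definition~\ref{defn:relative-analytification}, the preimage $(\Spec R)^{\an/S}$ is the corresponding open adic subspace of $X^{\an/S}$, and it suffices to treat the case $X = \Spec R$. Choosing a closed immersion $\Spec R \hookrightarrow \mathbf{A}^n_A$ and using that analytification takes it to a Zariski-closed immersion $(\Spec R)^{\an/S} \hookrightarrow \mathbf{A}^{n,\an/S}_S$ (compatibly with the maps $c$), one reduces further — via the transitivity of flatness and the fact that a closed point of the source maps to a point of $\mathbf{A}^n_A$ through which the local ring of $R$ is a quotient — to the case $X = \mathbf{A}^n_A$, where $\mathbf{A}^{n,\an/S}_S = \bigcup_m \mathbf{D}^n_S(m)$ is the increasing union of polydiscs of radius $|\varpi|^{-m}$.

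For $X = \mathbf{A}^n_A$, fix a point $x \in \mathbf{A}^{n,\an/S}_S$ lying in some polydisc $U = \mathbf{D}^n_S(m) = \Spa(B, B^+)$ with $B = A\langle \varpi^m T_1, \dots, \varpi^m T_n\rangle$, and let $\p = c_{X/S}(x) \in \Spec A[T_1,\dots,T_n]$ be the prime it maps to. The stalk $\O_{X^{\an/S}, x}$ is the local ring at $x$ of the sheafy affinoid $U$, while $\O_{X,\p}$ is the usual localization $A[T]_{\p}$. I would show the map $A[T]_{\p} \to \O_{U,x}$ is flat by factoring it: first $A[T]_{\p} \to B_{\q}$ where $\q = \p B$ (or rather the contraction of the support of $x$), which is flat because $A[T] \to B$ is flat — this is the key input, that the completion $A[T] \to A\langle \varpi^m T\rangle$ is flat, which holds since $A$ is noetherian and $A\langle \varpi^m T\rangle$ is a suitable completion/localization-type extension of the noetherian ring $A[T]$; then $B_{\q} \to \O_{U,x}$, which is flat because $\O_{U,x}$ is obtained from $B$ by a filtered colimit of rational localizations (each of which is flat by acyclicity of the structure sheaf on rational subsets, Theorem~\ref{thm:huber-coherent-sheaves}) followed by further localization, and filtered colimits of flat maps are flat. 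Composing gives flatness of $A[T]_{\p} \to \O_{X^{\an/S},x}$.

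The main obstacle I anticipate is the first flatness claim: that $A[T_1,\dots,T_n] \to A\langle \varpi^m T_1,\dots,\varpi^m T_n\rangle$ is flat. For $A$ a Tate ring with noetherian $A_0$ this should follow from the fact that $A\langle \varpi^m T\rangle$ is $\varpi$-adically the completion of $A_0[T]$ up to inverting $\varpi$, and completions of noetherian rings along an ideal are flat (\cite[\href{https://stacks.math.columbia.edu/tag/00MB}{Tag 00MB}]{stacks-project}), combined with flatness of localization; one must be slightly careful to pass between $A_0[T]$ and $A[T] = A_0[T][1/\varpi]$ and to handle the radius bookkeeping, but no genuinely new idea is needed. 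A second, minor point is justifying that each rational localization $\O_U(U) \to \O_U(V)$ is flat — this is a standard consequence of the exactness of $\widetilde{(-)}$ and the vanishing of higher cohomology recorded in Theorem~\ref{thm:huber-coherent-sheaves}, together with the fact that a strongly noetherian Tate affinoid is sheafy — and that the stalk is the resulting filtered colimit. Everything else is bookkeeping with the universal property of relative analytification and transitivity of flatness.
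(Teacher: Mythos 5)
Your argument is correct, but it takes a genuinely different route from the paper. The paper does not reduce to affine space: it compactifies the affine scheme $X$ to a projective scheme $\ov{X}$ over a ring of definition $A_0\subset A^+$, identifies $X^{\an/S}$ with the adic generic fiber of the $\varpi$-adic formal completion $\wdh{\ov{X}}$, and then checks flatness of the composite $\wdh{\ov{X}}_\eta \to \wdh{\ov{X}} \to \ov{X}$, quoting Fujiwara--Kato for flatness of the formal completion of a noetherian scheme and Huber for flatness of restriction to open affinoids, plus the evident flatness of $R \to R[1/\varpi]$ on formal affines. You instead reduce to $X=\bf{A}^n_A$ via a closed embedding, exhaust $\bf{A}^{n,\an/S}_S$ by polydiscs, and prove the key flatness $A[T]\to A\langle \varpi^m T\rangle$ by hand as ``$\varpi$-adic completion of the noetherian ring $A_0[\varpi^m T]$, then invert $\varpi$''. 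Both proofs bottom out in the same two ingredients (flatness of adic completion of noetherian rings, and flatness of rational localization on strongly noetherian affinoids, i.e.\ Theorem~\ref{thm:huber-coherent-sheaves} or \cite[Lemma B.4.3]{quotients}), but yours avoids the compactification and the comparison with formal schemes, which makes it more self-contained; the paper's version leverages machinery it needs elsewhere anyway. Two small points you should make explicit: (i) in the reduction step, you need that $(\Spec A[T]/I)^{\an/S}$ is the Zariski-closed subspace of $\bf{A}^{n,\an/S}_S$ with ideal sheaf generated by $I$, so that $\O_{X^{\an/S},x}=\O_{\bf{A}^{n,\an/S},x}/I\O_{\bf{A}^{n,\an/S},x}$ and flatness descends by base change along $A[T]_{\p}\to A[T]_{\p}/IA[T]_{\p}$ --- this follows from the construction in \cite[Prop.\,3.8]{H1} but deserves a citation; (ii) elements of $A[T]\setminus \p$ become units in the local ring $\O_{U,x}$ (they are nonzero in the residue field at $x$), so the map really does factor through $A[T]_{\p}$ and flatness of the localized map follows from flatness of $A[T]\to\O_{U,x}$.
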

\begin{proof}
    The question is clearly local on $X$, so we can assume that $X$ is affine. We choose a closed immersion $X=\mathrm{V}(f_1,\dots, f_s) \hookrightarrow \mathbf{A}^d_{A}$. Pick a point $x\in X^{\an/S}$. Then Example~\ref{example:affinoid-analytification} and \cite[Cor.~B.6.8]{quotients} imply that $\O_{X^{\an/S}, x}\simeq \O_{\bf{A}_A^{d, \an/S},x}/(f_1, \dots, f_s)$ and, likewise, we have $\O_{X, c_{X/S}(x)} \simeq \O_{\bf{A}^d_A, c_{X/S}(x)}/(f_1, \dots, f_s)$. Therefore, it suffices to prove the result for $X=\bf{A}^d_A$. In this case, Example~\ref{example:analytification-of-affine-space} implies that it suffices to show that the natural morphism
    \[
    c_n\coloneqq c_{\bf{A}^d_A/S}|_{\bf{D}^d_S\big(|\frac{1}{\varpi^n}|\big)} \colon \bf{D}^d_S\big(|\frac{1}{\varpi^n}|\big) \to \bf{A}^d_A
    \]
    is flat for any integer $n\geq 0$. Using the commutative diagram
    \[
    \begin{tikzcd}
    \bf{D}^d_S\big(|\frac{1}{\varpi^n}|\big) \arrow{r}{c_n} \arrow{d}{\cdot \varpi^n} \arrow[d, swap, "\wr"] & \bf{A}^d_A \arrow{d}{\cdot \varpi^n} \arrow[d, swap, "\wr"]\\
    \bf{D}^d_S \arrow{r}{c_0} & \bf{A}^d_A,
    \end{tikzcd}
    \]
    we reduce the question to showing that $c_0\colon \bf{D}^d_S \to \bf{A}^d_A$ is flat. This map can be written as the following composition
    \[
    \Spa(A\langle T_1, \dots, T_d\rangle, A^+\langle T_1, \dots, T_d\rangle) \xr{\alpha} \Spec A\langle T_1, \dots, T_d\rangle \xr{\beta} \Spec A[T_1, \dots, T_d].
    \]
    Now \cite[Prop.~II.6.6.3]{FujKato} implies that $\alpha$ is flat (alternatively, one can deduce it from \cite[(II.1) (iv) on page 530]{H1}). To see that $\beta$ is flat, it suffices to show that the map $A^+[T_1, \dots, T_d] \to A^+\langle T_1, \dots, T_d\rangle$ is flat. This follows from \cite[Prop.~0.8.2.18]{FujKato} and \cite[Thm.~0.8.2.19]{FujKato}.     
\end{proof}

\begin{cor}\label{cor:lci-relative-analytification} Let $X$ and $Y$ be locally finite type $A$-schemes, and let $i\colon X \to Y$ be an lci closed immersion of pure codimension $c$. Then its relative analytification $i^{\an/S} \colon X^{\an/S} \to Y^{\an/S}$ is an lci closed immersion of pure codimension $c$.
\end{cor}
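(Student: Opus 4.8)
The plan is to reduce the statement to a purely local, affinoid computation and then transport the regular-sequence condition across the analytification map using flatness. First I would note that the claim is local on $Y$ (by Lemma~\ref{lemma:base-change-general}, being a regular immersion is local on the target), so after covering $Y$ by affine opens I may assume $Y = \Spec B$ and $X = \Spec B/I$ with $B$ a finite type $A$-algebra and $I = (g_1, \dots, g_c)$ generated by a regular sequence; here I am implicitly using that an lci closed immersion of schemes, being Zariski-locally cut out by a regular sequence, can be arranged so on a chosen affine cover. On the adic side, $Y^{\an/S} = (\Spec B)^{\an/S}$ is a locally noetherian analytic adic $S$-space, and by the universal property of relative analytification (Definition~\ref{defn:relative-analytification}) the closed immersion $i\colon X \to Y$ analytifies to a morphism $i^{\an/S}\colon X^{\an/S} \to Y^{\an/S}$ which, by functoriality of analytification applied to the presentation $B \to B/I$, is a Zariski-closed immersion with ideal sheaf the image of $c_{Y/S}^* \widetilde{I}$ (one should check Zariski-closedness, e.g.\ by working locally on an affinoid cover of $Y^{\an/S}$ where $Y^{\an/S}$ is $\Spa(B', B'^+)$ and the ideal is $IB'$, which is finitely generated hence coherent by Lemma~\ref{lemma:pullback-coherent}).

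Next I would pin down the ideal sheaf of $i^{\an/S}$ concretely. Covering $Y^{\an/S}$ by affinoids $V = \Spa(B', B'^+)$, the map $B \to B'$ factors the analytification map $c_{Y/S}$ on global sections, and the Zariski-closed immersion $X^{\an/S}\times_{Y^{\an/S}} V \to V$ corresponds to the ideal $I B' \subset B'$ by compatibility of analytification with the closed immersion $\Spec B/I \hookrightarrow \Spec B$ together with \cite[Lemma~B.6.7]{quotients}; the ideal sheaf is then $\widetilde{IB'}$. So it remains to show that for each such $V$, the ideal $IB'$ is generated by a regular sequence of length $c$. The images $g_1, \dots, g_c \in B'$ generate $IB'$, so it suffices to show they form a regular sequence in $B'$.

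The key input is flatness: by Lemma~\ref{lemma:flatness}, the analytification morphism $c_{Y/S}\colon Y^{\an/S} \to \Spec B$ is flat as a morphism of locally ringed spaces, i.e.\ for every point $y \in V \subset Y^{\an/S}$ the local ring map $\O_{Y, c_{Y/S}(y)} \to \O_{Y^{\an/S}, y}$ is flat. A regular sequence stays regular after flat base change at the level of local rings: concretely, $(g_1, \dots, g_c)$ regular on the noetherian local ring $\O_{Y,c_{Y/S}(y)}$ remains regular on the flat (noetherian) extension $\O_{Y^{\an/S}, y}$ by \cite[\href{https://stacks.math.columbia.edu/tag/00LM}{Tag 00LM}]{stacks-project} (the same statement already invoked in Lemmas~\ref{lemma:base-change-general} and \ref{lemma:flat-base-change}), noting that the sequence is still a proper sequence since $(g_1,\dots,g_c)\O_{Y^{\an/S},y}$ is not the unit ideal when $y$ lies over a point of $X$—and when $y$ does not lie over $X$ the localization of $IB'$ is the unit ideal and there is nothing to check. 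Since $B'$ is noetherian, checking that $g_1, \dots, g_c$ is a regular sequence (equivalently, that the Koszul complex $K_\bullet(g_1,\dots,g_c; B')$ is exact in positive degrees) can be done stalk-locally at every point of $\Spec B'$, i.e.\ at every $y \in V$, which is exactly what flatness gives. This shows $i^{\an/S}|_V$ is a regular immersion of pure codimension $c$ for each $V$ in an affinoid cover, hence $i^{\an/S}$ is an lci closed immersion of pure codimension $c$.

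The main obstacle, I expect, is the bookkeeping in the second paragraph: making precise that relative analytification commutes with the formation of the Zariski-closed subscheme $\Spec B/I \hookrightarrow \Spec B$, so that the ideal sheaf of $i^{\an/S}$ is genuinely $c_{Y/S}^*\widetilde{I}$ locally, and in particular coherent. This is "morally obvious" from the universal property, but one must either cite a compatibility of analytification with closed immersions or argue directly on affinoids that $X^{\an/S} \times_{Y^{\an/S}} \Spa(B',B'^+) = \Spa(B'/IB', (B'^+/\cdots)^c)$ using \cite[Lemma~B.6.7]{quotients} and the fact (Lemma~\ref{lemma:pullback-coherent}) that $\widetilde{(B/I) \otimes_B B'} = \widetilde{B'/IB'}$. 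Once that identification is in hand, the regular-sequence part is an entirely standard flat-descent-of-depth argument, identical in spirit to the proof of Lemma~\ref{lemma:flat-base-change}.
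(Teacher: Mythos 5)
Your proposal is correct in substance but takes a genuinely different route from the paper. The paper also reduces to $Y=\Spec R$, $X=\Spec R/I$ with $I$ generated by a regular sequence, but then inducts down to the case $c=1$: it shows the analytification of an effective Cartier divisor is an effective Cartier divisor by combining Remark~\ref{rmk:eff-cartier-usual} (a Zariski-closed immersion is an effective Cartier divisor iff its ideal sheaf is invertible) with Lemma~\ref{lemma:flatness}, which gives $c^*\cal{I}_X\simeq\cal{I}_{X^{\an/S}}$ and hence invertibility of the analytified ideal sheaf. You instead keep the full length-$c$ regular sequence and transport regularity directly across the flat local homomorphisms $\O_{Y,c_{Y/S}(y)}\to\O_{Y^{\an/S},y}$. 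Your approach avoids the induction (and the attendant bookkeeping that a chain of analytified effective Cartier divisors assembles into a codimension-$c$ regular immersion), at the cost of a local-to-global step that the paper's invertibility argument sidesteps. Both rest on the same key input, Lemma~\ref{lemma:flatness}, and both leave implicit the same compatibility of analytification with closed immersions that you rightly flag as the main bookkeeping point.

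The one step you should tighten is the passage from stalkwise regularity back to regularity of $(g_1,\dots,g_c)$ in $B'$. Flat ascent along $\O_{Y,c_{Y/S}(y)}\to\O_{Y^{\an/S},y}$ gives regularity in the \emph{adic} stalks, but the points of $V=\Spa(B',B'^+)$ map under $\rm{supp}$ only onto the maximal ideals of $\Spec B'$ (not all primes), and to descend from $\O_{Y^{\an/S},y}$ to $B'_{\m}$ you need that $B'_{\m}\to\O_{Y^{\an/S},y}$ is a \emph{faithfully} flat local homomorphism for $y$ with $\rm{supp}(y)=\m$ (this is the same mechanism as in Lemma~\ref{lemma:flat-surj-faithfully-flat}, via \cite[Lemma~1.4]{H1}). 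Since vanishing of the finitely many finitely generated Koszul homology modules of $(g_1,\dots,g_c;B')$ is detected at maximal ideals, this closes the argument; alternatively, one can first deduce flatness of $B\to B'$ itself from Lemma~\ref{lemma:flatness} by the same maximal-ideal argument and then apply \cite[\href{https://stacks.math.columbia.edu/tag/00LM}{Tag 00LM}]{stacks-project} once, exactly as in Lemmas~\ref{lemma:base-change-general} and~\ref{lemma:flat-base-change}.
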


We note that \cite[Proposition 5.5]{Guo-Li} proves a stronger claim under the assumption that $A$ is a $K$-affinoid algebra over a non-archimedean field $K$.
\begin{proof}
    The question is local on $Y$, so we can assume that $Y=\Spec R$ is affine and $X=\Spec R/I$ for an ideal $I=(f_1, \dots, f_c)$ generated by a regular sequence of length $c$. Then, arguing inductively, we can assume that $X=\Spec R/(f)$ for a non-zero regular element $f\in R$. In this case, we wish to show that $i^{\an/S} \colon X^{\an/S} \to Y^{\an/S}$ is an effecitve Cartier divisor. For this, we note that Remark~\ref{rmk:eff-cartier-usual} ensures that it suffices to show that $\cal{I}_{X^{\an/S}}$ is invertible, where $\cal{I}_{X^{\an/S}}$ is the ideal sheaf of the closed immersion $X^{\an/S}\subset Y^{\an/S}$. Now Lemma~\ref{lemma:flatness} implies that $c_{X/S}^*(\cal{I}_{X}) \simeq \cal{I}_{X^{\an/S}}$, where $\cal{I}_X$ is the ideal sheaf of $X\subset Y$. Since $X\subset Y$ is an effective Cartier divisor, we conclude that $\cal{I}_{X^{\an/S}} \simeq c_{X/S}^*(\cal{I}_{X})$ is an invertible $\O_{Y^{\an/S}}$-module. In other words, $X^{\an/S} \subset Y^{\an/S}$ is an effective Cartier divisor. This finishes the proof. 
\end{proof}

We also mention the following GAGA principles:

\begin{lemma}[Relative GAGA]\label{lemma:relative-GAGA} Let $X$ be a proper $A$-scheme and let $c_{X/S} \colon (X^{\an/S}, \O_{X^{\an/S}}) \to (X, \O_X)$ be the relative analytification morphism. Then the induced functors 
\[
c_{X/S}^* \colon D^b_{\coh}(X) \to D^b_{\coh}(X^{\an/S}),
\]
\[
c_{X/S}^* \colon \Coh(X) \to \Coh(X^{\an/S}),
\]
\[
c_{X/S}\colon \rPic(X) \to \rPic(X^{\an/S})
\]
are equivalences.
\end{lemma}
\begin{proof}
    The first part follows immediately from \cite[Thm.~II.9.5.1]{FujKato}. To see the second claim, we note that Lemma~\ref{lemma:flatness} implies that $c_{X/S}$ is flat. In other words, $c_{X/S}^*$ is $t$-exact. This implies that the equivalence $c_{X/S}^* \colon D^b_{\coh}(X) \xr{\sim} D^b_{\coh}(X^{\an/S})$ induces an equivalence of hearts $c_{X/S}^* \colon \Coh(X) \xr{\sim} \Coh(X^{\an/S})$. Finally, we note that $\rPic(X)$ (resp. $\rPic(X^{\an/S})$) can be identified with the category of invertible objects in the monoidal category $\Coh(X)^{\otimes}$ (resp. $\Coh(X^{\an/S})^{\otimes}$). Since $c_{X/S}^* \colon \Coh(X) \xr{\sim} \Coh(X^{\an/S})$ is a monoidal equivalence, we conclude that it induces an equivalence $c_{X/S}\colon \rPic(X) \xr{\sim} \rPic(X^{\an/S})$. 
\end{proof}

\section{Analytic Proj construction}\label{section:relative-proj}

This section is devoted to the discussion of the relative Proj construction in the world of adic spaces. For rigid-analytic spaces, this notion has been studied in \cite{conrad-ample}. \smallskip

For the next definition, we fix a locally noetherian analytic adic space $S$.

\begin{defn}\label{defn:locally-coherent} A {\it locally coherent graded $\O_S$-algebra} $\cal{A}_\bullet$ is a graded $\O_S$-algebra $\cal{A}_\bullet=\bigoplus_{d\geq 0} \cal{A}_d$ such that each $\cal{A}_d$ is a coherent $\O_S$-module, and $\cal{A}_\bullet$ is locally finitely generated as an $\O_S$-algebra.

Let $S$ be an affinoid. A {\it coherent graded $\O_S(S)$-algebra} $A_\bullet$ is a graded $\O_S(S)$-algebra $A_\bullet=\bigoplus_{d\geq 0} A_d$ such that each $A_d$ is a coherent $\O_S(S)$-module, and $A_\bullet$ is finitely generated as an $\O_S(S)$-algebra.
\end{defn}

Now we wish to show that there is an equivalence between locally coherent graded $\O_S$-algebras and coherent graded $\O_S(S)$-algebras for a strongly noetherian affinoid space $S$. For this, we will need the following lemma:

\begin{lemma}\label{lemma:flat-surj-faithfully-flat} Let $f\colon S'=\Spa(B, B^+) \to S=\Spa(A, A^+)$ be a flat (resp. surjective flat) morphism of strongly noetherian affinoid spaces. Then $f^\sharp\colon A \to B$ is flat (resp. faithfully flat).
\end{lemma}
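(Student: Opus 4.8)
The plan is to reduce both assertions to statements about coherent sheaves and their global sections, using the equivalence of Theorem~\ref{thm:huber-coherent-sheaves}\ref{thm:huber-coherent-sheaves-2}. For the flat case, I first recall that flatness of a morphism $f\colon S'\to S$ of locally noetherian analytic adic spaces means (by definition) that for every $x\in S'$ the map of local rings $\O_{S,f(x)}\to \O_{S',x}$ is flat; so I should deduce ring-theoretic flatness of $f^\sharp\colon A\to B$ from stalkwise flatness. The cleanest route: take any finite $A$-module $M$ and a short exact sequence $0\to N\to A^n\to M\to 0$, apply $\widetilde{(-)}$ (exact by Theorem~\ref{thm:huber-coherent-sheaves}\ref{thm:huber-coherent-sheaves-1}), pull back along $f$ (which is stalkwise flat, hence exact on coherent sheaves by Lemma~\ref{lemma:pullback-coherent} together with stalkwise flatness), and then take global sections; by Theorem~\ref{thm:huber-coherent-sheaves}\ref{thm:huber-coherent-sheaves-3} and Lemma~\ref{lemma:pullback-coherent}\ref{lemma:pullback-coherent-2}, $\Gamma(S', f^*\widetilde{M}) = M\otimes_A B$, so we conclude that $-\otimes_A B$ is exact on finite $A$-modules. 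Since $A$ is noetherian, exactness of $-\otimes_A B$ on finitely generated modules implies flatness of $B$ over $A$ (a standard fact: torsion can be checked on finitely generated ideals).

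For the faithfully flat case, I would assume in addition that $f$ is surjective and argue that then $A\to B$ is \emph{faithfully} flat. Having already established flatness, it remains to show that $\Spec B\to \Spec A$ is surjective, equivalently that for every prime $\p\subset A$ one has $\p B\neq B$, equivalently that $M\otimes_A B\neq 0$ for every nonzero finite $A$-module $M$. Here is where surjectivity of $f$ enters: if $M\neq 0$, pick a point $x\in S$ in the support of $\widetilde{M}$ (nonempty since $\Gamma(S,\widetilde M)=M\neq 0$ and $\widetilde{(-)}$ is an equivalence, so the sheaf $\widetilde M$ is nonzero, hence has nonempty support), choose $x'\in S'$ with $f(x')=x$ by surjectivity, and observe that the stalk $(f^*\widetilde M)_{x'} = \widetilde M_x\otimes_{\O_{S,x}}\O_{S',x'}$ is nonzero because $\O_{S,x}\to\O_{S',x'}$ is flat and local (flat local maps are faithfully flat, so they do not kill nonzero finitely generated modules). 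Hence $f^*\widetilde M\neq 0$, so $\Gamma(S', f^*\widetilde M)=M\otimes_A B\neq 0$.

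The one point requiring a little care — and I expect it to be the main technical obstacle — is the translation ``$f$ flat $\Rightarrow$ $f^*$ exact on coherent $\O_S$-modules,'' since a priori $f^*$ is only right exact. But this is immediate once one knows the kernel of a map of coherent sheaves is coherent (Theorem~\ref{thm:huber-coherent-sheaves}\ref{thm:huber-coherent-sheaves-4}) and can be checked on stalks, together with the formula $(f^*\F)_{x'} = \F_{f(x')}\otimes_{\O_{S,f(x')}}\O_{S',x'}$ for coherent $\F$ (which follows from Lemma~\ref{lemma:pullback-coherent} and the fact that tilde-construction commutes with localization via rational neighborhoods); stalkwise flatness of $\O_{S,f(x')}\to\O_{S',x'}$ then gives exactness of $f^*$ on short exact sequences of coherent sheaves. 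With this in hand, everything above is formal. Alternatively, one may bypass sheaves entirely and argue directly: $A\to B$ is adic between Tate rings, and a module-theoretic computation with rational localizations together with the fact that $\{\O_{S,x}\}_{x\in S}$ jointly detect exactness of complexes of finite $A$-modules reduces flatness (resp.\ faithful flatness) of $A\to B$ to the stalkwise hypothesis; I would present whichever of the two is shorter, but the sheaf-theoretic version seems cleanest given the tools already developed.
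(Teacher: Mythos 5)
Your proof is correct, but it takes a genuinely different route from the paper on both halves. For flatness of $A\to B$ the paper simply cites \cite[Lemma B.4.3]{quotients} (with a footnote on adapting that argument from Tate to general analytic affinoids), whereas you reprove the statement from scratch using the coherent-sheaf machinery of Section~\ref{section:sheaves}: exactness of $\widetilde{(-)}$, exactness of $f^*$ on coherent sheaves checked on stalks, vanishing of higher coherent cohomology, and the identification $\Gamma(S', f^*\widetilde M)\simeq M\otimes_A B$. This is sound provided flatness of a morphism of adic spaces is defined stalkwise, which is indeed the convention in \cite{quotients}, so in effect you have supplied a proof of the cited lemma rather than a citation. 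For faithful flatness the paper's key input is Huber's result (\cite[Lemma 1.4]{H1}) that every maximal ideal $\m\subset A$ is the support of some $v\in\Spa(A,A^+)$; lifting $v$ along the surjection $|S'|\to|S|$ shows that the image of $\Spec B\to\Spec A$ contains all closed points, which for a flat map gives faithful flatness. You instead verify the criterion that $M\otimes_A B\neq 0$ for every nonzero finite $A$-module $M$, by locating a point in the (nonempty) support of $\widetilde M$, lifting it by surjectivity, and using that the flat local map on stalks is faithfully flat. Both arguments work; the paper's is shorter because it outsources the content to Huber, while yours is self-contained given the Section~\ref{section:sheaves} toolkit. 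One intermediate assertion is loosely stated: surjectivity of $\Spec B\to\Spec A$ is not literally equivalent to $\p B\neq B$ for every prime $\p$ for an arbitrary ring map, but going-down for flat maps repairs this, and in any case the condition you actually verify directly implies faithful flatness.
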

\begin{proof}
    Flatness of $A \to B$ follows from \cite[Lemma B.4.3]{quotients}\footnote{\cite[Lemma B.4.2 and B.4.3]{quotients} are formulated for Tate affinoids. However, the same proofs work for analytic affinoids. One only needs to use \cite[Thm.\,1.4.14]{KedAr} in place of \cite[(II.1), (iv) on page 530]{H1} in the proof of \cite[Lemma B.4.2]{quotients}.}. Now we assume that $f$ is also surjective, and show that $f^\sharp$ is faithfully flat. It suffices to show that $\Spec B \to \Spec A$ is surjective onto the closed points of $\Spec A$. This follows from the fact that, for any maximal ideal of $\mathfrak{m}\subset A$, there is a point $v\in \Spa(A, A^+)$ such that $\rm{supp}(v)=\mathfrak{m}$ (see \cite[Lemma 1.4]{H1}) and surjectivity of $\Spa(B, B^+) \to \Spa(A, A^+)$. 
\end{proof}

\begin{lemma} Let $S$ be a strongly noetherian affinoid. Then $\Gamma(S, -)$ defines an equivalence
\begin{equation*}
\Gamma(S, -)\colon \left\{\begin{array}{c}\text{locally coherent graded} \\  \O_S\text{-algebras}\end{array}\right\} \xr{\sim} \left\{\begin{array}{l} \text {coherent graded} \\ \O_S(S)\text{-algebras}\end{array}\right\}.
\end{equation*}
\end{lemma}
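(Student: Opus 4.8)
The proof should follow the standard pattern for showing that a global-sections functor is an equivalence on (graded) module-type categories over an affinoid, leveraging the ungraded statement already available as Theorem~\ref{thm:huber-coherent-sheaves}. I would first construct the functor carefully, then build a quasi-inverse, and finally check that the two composites are naturally isomorphic to the identities, reducing each check to the already-established ungraded case degree by degree.

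First I would make precise what $\Gamma(S,-)$ does: given a locally coherent graded $\O_S$-algebra $\cal{A}_\bullet = \bigoplus_{d \geq 0} \cal{A}_d$, each $\cal{A}_d$ is a coherent $\O_S$-module, so by Theorem~\ref{thm:huber-coherent-sheaves}\ref{thm:huber-coherent-sheaves-3} it is acyclic and $A_d \coloneqq \Gamma(S, \cal{A}_d)$ is a finite $\O_S(S)$-module; the multiplication maps $\cal{A}_d \otimes_{\O_S} \cal{A}_e \to \cal{A}_{d+e}$ induce on global sections (using that $\Gamma$ is lax monoidal and that $\widetilde{(-)}$ turns tensor products into the natural map) a graded ring structure on $A_\bullet \coloneqq \bigoplus_d A_d$. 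To see that $A_\bullet$ is a \emph{coherent} graded algebra in the sense of Definition~\ref{defn:locally-coherent}, I need it to be locally finitely generated over $A_0 = \O_S(S)$; this follows because local finite generation of $\cal{A}_\bullet$ as an $\O_S$-algebra means, on the affinoid $S$ (or a finite affinoid cover, which we may take to be $S$ itself after a gluing argument using quasi-compactness and Lemma~\ref{lemma:connected-different}-style descent), that finitely many sections in bounded degrees generate, and applying $\Gamma$ preserves this. Conversely, the quasi-inverse sends a coherent graded $\O_S(S)$-algebra $A_\bullet$ to $\widetilde{A_\bullet} \coloneqq \bigoplus_{d \geq 0} \widetilde{A_d}$, with multiplication induced from that of $A_\bullet$ via Lemma~\ref{lemma:pullback-coherent}\ref{lemma:pullback-coherent-2} (which identifies $\widetilde{A_d} \otimes_{\O_S} \widetilde{A_e}$ with $\widetilde{A_d \otimes_{A_0} A_e}$ on rational subsets, hence supplies the multiplication map as $\widetilde{(-)}$ applied to the ring multiplication).

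Then I would verify the two composites are the identity. For $A_\bullet \mapsto \widetilde{A_\bullet} \mapsto \Gamma(S, \widetilde{A_\bullet})$: by Theorem~\ref{thm:huber-coherent-sheaves}\ref{thm:huber-coherent-sheaves-2} we have $\Gamma(S, \widetilde{A_d}) = A_d$ naturally in each degree, and one checks this identification is compatible with multiplication because both multiplications are, by construction, induced from that of $A_\bullet$ — this is a diagram chase using the unit/counit of the adjunction between $\widetilde{(-)}$ and $\Gamma(S,-)$. For $\cal{A}_\bullet \mapsto \Gamma(S, \cal{A}_\bullet) \mapsto \widetilde{\Gamma(S, \cal{A}_\bullet)}$: in each degree $d$, $\cal{A}_d$ is coherent, so Theorem~\ref{thm:huber-coherent-sheaves}\ref{thm:huber-coherent-sheaves-2} gives a natural isomorphism $\widetilde{\Gamma(S, \cal{A}_d)} \xrightarrow{\sim} \cal{A}_d$, and again one checks these assemble into an isomorphism of graded algebras by tracking the multiplication maps through the equivalence. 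The compatibility of multiplications is where care is needed: one uses that on each rational subset $U \subset S$ the sheaf multiplication $\cal{A}_d(U) \otimes_{\O_S(U)} \cal{A}_e(U) \to \cal{A}_{d+e}(U)$ is the base change along $\O_S(S) \to \O_S(U)$ of the global multiplication, which holds precisely because $\widetilde{(-)}$ is compatible with pullback (Lemma~\ref{lemma:pullback-coherent}).

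\textbf{The main obstacle.} The genuinely substantive point is the bookkeeping around \emph{local finite generation}: Definition~\ref{defn:locally-coherent} phrases this condition on $\O_S$-algebras using a covering, whereas for the affinoid $S$ one wants the single-chart statement. I expect the real work to be in showing that ``locally finitely generated as an $\O_S$-algebra'' for $\cal{A}_\bullet$ is equivalent, over a strongly noetherian affinoid $S$, to ``$A_\bullet = \Gamma(S,\cal{A}_\bullet)$ is finitely generated over $A_0$'' — one direction is a quasi-compactness/gluing argument (cover $S$ by finitely many affinoids on which finitely many homogeneous generators exist, then spread them out using that $S$ itself is affinoid and the $\cal{A}_d$ are globally generated by Theorem~\ref{thm:huber-coherent-sheaves}), and the other is immediate. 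Once this is settled, the functoriality and the fact that the composites are naturally the identity are formal consequences of Theorem~\ref{thm:huber-coherent-sheaves} and Lemma~\ref{lemma:pullback-coherent}, applied in each graded degree; everything else is routine diagram-chasing with multiplication maps.
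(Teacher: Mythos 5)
Your proposal is correct, and its skeleton --- apply Theorem~\ref{thm:huber-coherent-sheaves} degree by degree, take $\widetilde{(-)}$ as the quasi-inverse, and isolate finite generation of $\Gamma(S,\cal{A}_\bullet)$ over $\O_S(S)$ as the one substantive point --- matches the paper's. Where you diverge is in how that substantive point is handled. The paper argues by descent: choose a finite affinoid cover $S=\bigcup_i U_i$ on which $\cal{A}_\bullet$ is finitely generated, note that $\O_S(S)\to \prod_i \O_S(U_i)$ is faithfully flat (Lemma~\ref{lemma:flat-surj-faithfully-flat}), that the base change of $\Gamma(S,\cal{A}_\bullet)$ along this map is a finitely generated algebra (Lemma~\ref{lemma:pullback-coherent}), and then invoke faithfully flat descent of finite generation of algebras. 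Your route is a direct spreading-out: homogeneous generators in degrees $\leq N$ exist locally, and you promote this to global generation. That can be made to work, but it needs an ingredient you do not name: the locally verified surjectivity of the multiplication maps from low-degree pieces onto each $\cal{A}_d$ must be converted into surjectivity on global sections, which uses exactness and acyclicity of $\Gamma(S,-)$ on coherent sheaves (Theorem~\ref{thm:huber-coherent-sheaves}) together with the identification $\Gamma(S,\widetilde{M}\otimes_{\O_S}\widetilde{N})\simeq M\otimes_{\O_S(S)}N$; the descent argument sidesteps this bookkeeping at the cost of citing descent for finitely generated algebras. One further small point: you should justify that $\Gamma(S,\bigoplus_d \cal{A}_d)=\bigoplus_d\Gamma(S,\cal{A}_d)$ --- the paper does this via spectrality (hence quasi-compactness) of $S$ --- since otherwise the graded ring you call $A_\bullet$ is not literally $\Gamma(S,\cal{A}_\bullet)$.
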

\begin{proof}
    The proof essentially follows from Lemma~\ref{thm:huber-coherent-sheaves}. One easily sees that $\widetilde{(-)}$ provides a quasi-inverse to $\Gamma(S, -)$ provided that, for a locally coherent graded $\O_S$-algebra $\cal{A}_\bullet$, the $\O_S$-algebra
    \[
    \Gamma(S, \cal{A}_\bullet)
    \]
    is naturally graded and coherent as a graded $\O_S(S)$-algebra. For the purposes of proving the first claim, it suffices to show $\Gamma(S, -)$ commutes with infinite direct sums. This follows from spectrality of $S$ and \cite[\href{https://stacks.math.columbia.edu/tag/009F}{Tag 009F}]{stacks-project}. \smallskip
    
    Now we need to show that $\Gamma(S, \cal{A}_\bullet)$ is a coherent graded $\O_S(S)$-algebra for any locally coherent graded $\O_S$-algebra. The locally coherent assumption together with Lemma~\ref{thm:huber-coherent-sheaves}, Lemma~\ref{lemma:pullback-coherent}(\ref{lemma:pullback-coherent-2}), and Lemma~\ref{lemma:flat-surj-faithfully-flat} imply that there is a morphism of strongly noetherian affinoid adic spaces\footnote{In fact, $S'$ is a finite disjoint union of rational subdomains in $S$.} $S' \to S$ such that the ring homomorphism $\O_S(S) \to \O_{S'}(S')$ is faithfully flat and 
    \[
    \Gamma(S, \cal{A}_\bullet) \otimes_{\O_S(S)} \O_{S'}(S') 
    \]
    is a finitely generated $\O_{S'}(S')$-algebra. Therefore, \cite[\href{https://stacks.math.columbia.edu/tag/00QP}{Tag 00QP}]{stacks-project} ensures that $\Gamma(S, \cal{A}_\bullet)$ is a finite generated $\O_S(S)$-algebra. 
\end{proof}

For the next definition, we fix a stronly noetherian affinoid $S$, a locally coherent graded $\O_S$-algebra $\cal{A}$, and a corresponding coherent graded $\O_S(S)$-algebra $A_\bullet$.

\begin{defn}\label{defn:proj-affinoid} The {\it analytic relative Proj space} 
\[
\ud{\rm{Proj}}^{\an}_S \cal{A}_\bullet \coloneqq \left(\ud{\rm{Proj}}_{\Spec \O_S(S)} A_\bullet\right)^{\an/S}
\]
is the relative analytification of the algebraic (relative) Proj scheme\footnote{See \cite[\textsection 2]{EGA2} for a detailed discussed of the algebraic Proj construction. In particular, use \cite[Prop.\,(2.7.1)]{EGA2} to ensure that $\ud{\rm{Proj}}_{\Spec \O_S(S)} A_\bullet$ is a finite type $\O_S(S)$-scheme, so its relative analytification is well-defined.}. 
\end{defn}

\begin{lemma}\label{lemma:proj-base-change} Let $f\colon S' \to S$ be a  morphism of strongly noetherian affinoids, and let $\cal{A}_\bullet$ be a locally coherent graded $\O_S$-algebra. Then there is a natural isomorphism
\[
\psi_{S, S'}\colon \ud{\rm{Proj}}_{S'}^{\an} \left(f^*\cal{A}_\bullet\right) \xr{\sim}\ud{\rm{Proj}}_{S}^{\an} \left(\cal{A}_\bullet\right) \times_S S'.
\]
Furthermore, if $g\colon S'' \to S'$ is another morphism of strongly noetherian affinoids, then the diagram
\[
\begin{tikzcd}
    \ud{\rm{Proj}}_{S''}^{\an} \left(g^*\left(f^*\cal{A}_\bullet\right)\right) \arrow{d}{\wr}  \arrow{r}{\psi_{S', S''}} & \ud{\rm{Proj}}_{S'}^{\an} \left(f^*\cal{A}_\bullet\right) \times_{S'} S'' \arrow{r}{\psi_{S, S'}\times \rm{id}} & \ud{\rm{Proj}}_{S}^{\an} \left(\cal{A}_\bullet\right) \times_S S'\times_{S'} S'' \arrow{d}{\wr} \\
    \ud{\rm{Proj}}_{S''}^{\an} \left(\left(f\circ g\right)^*\cal{A}_\bullet\right) & & \ud{\rm{Proj}}_{S}^{\an} \left(\cal{A}_\bullet\right) \times_S S'' \arrow{ll}{\psi_{S'', S}}
\end{tikzcd}
\]
commutes.
\end{lemma}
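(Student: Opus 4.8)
The plan is to reduce the statement to two base-change facts: one for the \emph{algebraic} relative Proj and one for the relative analytification functor. Write $A\coloneqq\O_S(S)$, $B\coloneqq\O_{S'}(S')$, let $\varphi\colon A\to B$ be the ring map induced by $f$ on global sections, and set $A_\bullet\coloneqq\Gamma(S,\cal{A}_\bullet)$, which is a coherent graded $A$-algebra by the equivalence established above. Applying Lemma~\ref{lemma:pullback-coherent}(\ref{lemma:pullback-coherent-2}) in each degree gives $f^*\cal{A}_d\simeq\widetilde{A_d\otimes_A B}$, and since $\Gamma(S',-)$ commutes with arbitrary direct sums (spectrality of $S'$ together with \cite[\href{https://stacks.math.columbia.edu/tag/009F}{Tag 009F}]{stacks-project}), we obtain a canonical isomorphism of graded $B$-algebras $\Gamma(S',f^*\cal{A}_\bullet)\simeq A_\bullet\otimes_A B$; note $A_\bullet\otimes_A B$ is coherent since base change preserves finite generation. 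Hence $f^*\cal{A}_\bullet$ is again locally coherent, and unwinding Definition~\ref{defn:proj-affinoid} we have
\[
\ud{\rm{Proj}}_{S'}^{\an}(f^*\cal{A}_\bullet)=\big(\ud{\rm{Proj}}_{\Spec B}(A_\bullet\otimes_A B)\big)^{\an/S'},\qquad \ud{\rm{Proj}}_{S}^{\an}(\cal{A}_\bullet)=\big(\ud{\rm{Proj}}_{\Spec A}(A_\bullet)\big)^{\an/S}.
\]

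By the base-change property of the algebraic Proj construction (\cite[(3.5.3)]{EGA2}) there is a canonical isomorphism of $B$-schemes $\ud{\rm{Proj}}_{\Spec B}(A_\bullet\otimes_A B)\simeq\ud{\rm{Proj}}_{\Spec A}(A_\bullet)\times_{\Spec A}\Spec B$, so it remains to produce, for every locally finite type $A$-scheme $X$, a canonical isomorphism $(X\times_{\Spec A}\Spec B)^{\an/S'}\simeq X^{\an/S}\times_S S'$ of adic $S'$-spaces, and then specialize to $X=\ud{\rm{Proj}}_{\Spec A}(A_\bullet)$. This last isomorphism I would obtain by Yoneda. First observe that the two structure maps $S'\to\Spec A$, namely $c_S\circ f$ and $(\Spec\varphi)\circ c_{S'}$, coincide, since both classify $\varphi$ under the universal property of $\Spec A$ recalled at the start of Section~\ref{section:analytification}. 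Now for an adic $S'$-space $U$, with its induced $\Spec B$- and $\Spec A$-structures, the fact that the fibre product of schemes also computes the fibre product in $\bf{LRS}$ (a standard fact, which reduces to the affine case via \cite[\href{https://stacks.math.columbia.edu/tag/01I1}{Tag 01I1}]{stacks-project}) gives $\rm{Map}_{\bf{LRS}_{/\Spec B}}(U,X\times_{\Spec A}\Spec B)=\rm{Map}_{\bf{LRS}_{/\Spec A}}(U,X)$; Definition~\ref{defn:relative-analytification} identifies the latter with $\rm{Map}_{\bf{Adic}_{/S}}(U,X^{\an/S})$ (viewing $U$ as an $S$-space via $U\to S'\to S$); and the universal property of the fibre product of adic spaces identifies this with $\rm{Map}_{\bf{Adic}_{/S'}}(U,X^{\an/S}\times_S S')$. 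Chaining these with the defining property of $(X\times_{\Spec A}\Spec B)^{\an/S'}$ yields an isomorphism of functors on adic $S'$-spaces, hence the desired isomorphism; composing with the two isomorphisms above produces $\psi_{S,S'}$.

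Finally, the cocycle compatibility is formal. With $C\coloneqq\O_{S''}(S'')$ and the induced maps $A\to B\to C$, each of the three isomorphisms assembled into $\psi_{S,S'}$ satisfies its own transitivity: the identification $\Gamma(S'',g^*f^*\cal{A}_\bullet)\simeq A_\bullet\otimes_A C$ is compatible because $\widetilde{(-)}$, $f^*$ and $g^*$ are functorial and Lemma~\ref{lemma:pullback-coherent}(\ref{lemma:pullback-coherent-2}) is natural; the algebraic-Proj isomorphism is compatible by \cite[(3.5.3)]{EGA2}; and the analytification isomorphism is compatible because every identification in the Yoneda argument is natural in the test object and the relevant universal properties compose. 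Concretely, one checks the square commutes by evaluating both composite natural transformations on an arbitrary adic $S''$-space $U$ and tracing that a given $U$-point is carried, either way, to the same element of $\rm{Map}_{\bf{LRS}_{/\Spec A}}(U,\ud{\rm{Proj}}_{\Spec A}(A_\bullet))$; this is a bookkeeping diagram chase with no real content. I expect the only mildly delicate point in the whole argument to be keeping track of the various $S$-, $S'$-, $\Spec A$- and $\Spec B$-structures on the test object $U$ in the Yoneda step — in particular verifying that the two $\Spec A$-structures on $S'$ agree.
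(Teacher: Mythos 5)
Your proof is correct and follows essentially the same route as the paper: reduce to the base-change property of the algebraic relative Proj and to the compatibility of relative analytification with base change. The paper compresses the latter into the phrase ``after unravelling the definitions'' and cites the Stacks Project instead of EGA for the algebraic step, whereas you spell out the analytification base-change isomorphism via the Yoneda/universal-property argument — a worthwhile expansion, but not a different approach.
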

\begin{proof}
    Let $A_\bullet$ be a coherent graded $\O_S(S)$-algebra corresponding to $\cal{A}_\bullet$. Then, after unravelling the definitions, it suffices to show that there is a natural isomorphism of $\O_{S'}(S')$-schemes
    \[
    \ud{\rm{Proj}}_{S'} \left(A_\bullet \otimes_{\O_S(S)} \O_{S'}(S')\right) \simeq \ud{\rm{Proj}}_S \left(A_\bullet\right) \times_S S' 
    \]
    that satisfies the ``cocycle'' formula. This follows from  \cite[\href{https://stacks.math.columbia.edu/tag/01N2}{Tag 01N2}]{stacks-project} and \cite[\href{https://stacks.math.columbia.edu/tag/01MZ}{Tag 01MZ}]{stacks-project}. 
\end{proof}

\begin{lemma}\label{lemma:gluing-proj} Let $S$ be a locally noetherian analytic adic space, and let $\cal{A}_\bullet$ be a locally coherent graded $\O_S$-algebra. Then there is an (essentially unique) analytic adic $S$-space 
\[
\pi\colon \ud{\rm{Proj}}^{\an}_S \cal{A}_\bullet \to S
\]
with the following properties: 
\begin{enumerate}
    \item for every affinoid $U\subset S$ there exists an isomorphism $i_U\colon \pi^{-1}(U) \xr{\sim} \ud{\rm{Proj}}^{\an}_U \cal{A}_{\bullet}|_U$, and 
    \item for affinoid opens $V\subset U \subset S$ the composition
    \[
    \ud{\rm{Proj}}^{\an}_V \cal{A}_{\bullet}|_V \xr{i_V^{-1}} \pi^{-1}(V) \xr{\sim} \pi^{-1}(U)\times_{U} V \xr{i_{U}\times_{U} V} \ud{\rm{Proj}}^{\an}_{U} \cal{A}_{\bullet}|_{U} \times_{U} V
    \]
    is equal to $\psi_{U, V}$ from Lemma~\ref{lemma:proj-base-change}. 
\end{enumerate}
\end{lemma}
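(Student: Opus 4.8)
The plan is to glue the affinoid-local constructions $\ud{\rm{Proj}}^{\an}_U \cal{A}_\bullet|_U$ of Definition~\ref{defn:proj-affinoid} along an affinoid open cover of $S$, taking the base-change isomorphisms $\psi$ of Lemma~\ref{lemma:proj-base-change} as the transition data. Since pairwise and triple intersections of affinoid opens in an analytic adic space need not be affinoid, I expect the argument to run in two layers of gluing: first producing transition isomorphisms over the (possibly non-affinoid) pairwise intersections, and then producing the global space.

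\textbf{Step 1.} I would fix an affinoid open cover $S=\bigcup_{i\in I} U_i$ by strongly noetherian Tate affinoids and set $P_i\coloneqq \ud{\rm{Proj}}^{\an}_{U_i}\cal{A}_\bullet|_{U_i}$ with structure map $\pi_i\colon P_i\to U_i$. For a pair $i,j$, choose an affinoid cover $U_i\cap U_j=\bigcup_k W_k$. Lemma~\ref{lemma:proj-base-change} provides $\psi_{U_i, W_k}\colon \ud{\rm{Proj}}^{\an}_{W_k}\cal{A}_\bullet|_{W_k}\xr{\sim}\pi_i^{-1}(W_k)$ and the analogous isomorphism for $j$; composing yields
\[
\varphi_{ij}^{(k)}\colon \pi_i^{-1}(W_k) \xr{\sim} \ud{\rm{Proj}}^{\an}_{W_k}\cal{A}_\bullet|_{W_k} \xr{\sim} \pi_j^{-1}(W_k).
\]
The cocycle square of Lemma~\ref{lemma:proj-base-change}, applied to $W_k\cap W_l\subset W_k$ and $W_k\cap W_l\subset U_i$ (resp.\ $\subset U_j$), should force $\varphi_{ij}^{(k)}$ and $\varphi_{ij}^{(l)}$ to agree over $\pi_i^{-1}(W_k\cap W_l)$; hence they glue to $\varphi_{ij}\colon \pi_i^{-1}(U_i\cap U_j)\xr{\sim}\pi_j^{-1}(U_i\cap U_j)$, and the same cocycle argument shows independence of the chosen affinoid refinement.

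\textbf{Step 2.} For a triple $i,j,k$ I would refine $U_i\cap U_j\cap U_k$ by affinoids and use the cocycle formula once more to obtain $\varphi_{jk}\circ\varphi_{ij}=\varphi_{ik}$ on $\pi_i^{-1}(U_i\cap U_j\cap U_k)$, with $\varphi_{ii}=\rm{id}$. Then the standard gluing of adic spaces along open subspaces produces $\pi\colon P\to S$ together with open immersions $P_i\hookrightarrow P$ over $U_i$ realizing $P_i\xr{\sim}\pi^{-1}(U_i)$.

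\textbf{Step 3.} To get the isomorphism $i_U$ of property (1) for an arbitrary affinoid $U\subset S$, I would again cover $U\cap U_i$ by affinoids, compose $\psi_{U,\bullet}$ with $\psi_{U_i,\bullet}^{-1}$, and glue (using the cocycle formula for independence); property (2) then reads off directly from the cocycle formula of Lemma~\ref{lemma:proj-base-change} for nested affinoids $V\subset U$. For essential uniqueness, given another $(\pi'\colon P'\to S, i'_U)$ with the same properties, the isomorphisms $(i'_{U_i})^{-1}\circ i_{U_i}$ agree on overlaps by property (2) and the cocycle condition, hence glue to an isomorphism $P\xr{\sim}P'$ over $S$. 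I expect the only real obstacle to be bookkeeping: there is no new geometric input, but every invocation of $\psi$ must be carried out over an affinoid refinement of an intersection and then re-glued, and each re-gluing must be checked independent of the refinement — a routine but tedious diagram chase controlled entirely by the single cocycle identity of Lemma~\ref{lemma:proj-base-change}.
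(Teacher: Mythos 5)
Your proposal is correct and is exactly the "standard gluing argument from Lemma~\ref{lemma:proj-base-change}" that the paper invokes without elaboration: the transition isomorphisms come from the $\psi$'s, and the single cocycle identity of that lemma handles agreement on refinements, the triple-overlap condition, and uniqueness. Your extra care about non-affinoid intersections is the right bookkeeping and introduces no gap.
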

\begin{proof}
    This follows formally from Lemma~\ref{lemma:proj-base-change} and standard gluing arguments. 
\end{proof}

For the next definition, we fix a locally noetherian analytic adic space $S$ and a locally coherent graded $\O_S$-algebra $\cal{A}_\bullet$.

\begin{defn}\label{defn:proj-general} The {\it analytic relative Proj} of $\cal{A}_\bullet$ is the morphism 
\[
\ud{\rm{Proj}}^{\an}_S \cal{A}_\bullet \to S
\]
is the adic $S$-space constructed in Lemma~\ref{lemma:gluing-proj}.
\end{defn}

\begin{rmk}\label{rmk:proj-base-change} Lemma~\ref{lemma:proj-base-change} easily implies that the formation of analytic Proj commutes with arbitrary base change. More precisely, for any morphism $f\colon S' \to S$ of locally noetherian analytic adic spaces and a locally coherent graded $\O_S$-algebra $\cal{A}_\bullet$, there is a natural isomorphism
\[
\rm{Proj}^{\an}_{S'} \left(f^*\cal{A}_\bullet\right) \simeq \left(\rm{Proj}_S^{\an} \cal{A}_\bullet\right) \times_S S'. 
\]
\end{rmk}

\begin{rmk}\label{rmk:proj-proper} We note that $\ud{\rm{Proj}}^{\an}_S \cal{A}_\bullet$ is proper over $S$ by Remark~\ref{rmk:analytification-proper} and a combination of \cite[Prop.\,(3.1.9)(i), Prop.\,(3.1.10), and Thm.\,(5.5.3)(i)]{EGA2}.
\end{rmk}

\begin{rmk}\label{rmk:universal-line-bundle} Similarly to the algebraic situation, the analytic Proj-construction 
\[
P\coloneqq \ud{\rm{Proj}}_S^{\an} (\cal{A}_\bullet) \to S
\]
comes equipped with the coherent sheaf $\O_{P/S}(1)$. If $S$ is affinoid, one defines $\O_{P/S}(1)$ to be the relative analytification of the algebraic $\O(1)$. More precisely, we put $S^{\alg}\coloneqq \Spec \O_S(S)$ and $P^{\alg} = \ud{\rm{Proj}}_{\Spec \O_S(S)} A_\bullet \to \Spec \O_S(S)$ to be the algebraic Proj construction and define 
\[
\O_{P/S}(1)\coloneqq c_{P^{\alg}/S}^* \O_{P^{\alg}/S^{\alg}}(1),
\]
see Definition~\ref{defn:relative-analytification}. In general, one glues these line bundles locally on $S$. The formation of $\O_{P/S}(1)$ commutes with an arbitrary base change $S' \to S$. If $\cal{A}_\bullet$ is generated by $\cal{A}_1$, then $\O_{P/S}(1)$ is a line bundle. 
\end{rmk}

Now we give two particularly interesting examples of the analytic Proj construction:

\begin{defn}\label{defn:projective-bundle}
Let $\cal{E}$ be a vector bundle on $S$. The {\it projective bundle associated to $\cal{E}$} is the morphism
\[
\bf{P}_S(\cal{E})\coloneqq \ud{\rm{Proj}}_S^{\an}\left(\rm{Sym}_S^\bullet \cal{E}\right) \to S.
\]
\end{defn}

\begin{defn}\label{defn:blow-ups} Let $\cal{I}$ be a coherent ideal sheaf on locally noetherian analytic $X$ and $Z\subset X$ be the associated closed adic subspace. The {\it blow-up of $X$ along $Z$}, or {\it the blow-up of $X$ in the ideal sheaf $\cal{I}$}, is the morphism
\[
\rm{Bl}_Z(X)\coloneqq \ud{\rm{Proj}}^{\an}_X \big(\bigoplus_{d\geq 0} \cal{I}^d\big) \to X.
\]
\end{defn}

\begin{defn} For a blow-up $\pi \colon \rm{Bl}_Z(X) \to X$ of $X$ along $i\colon Z \to X$, the {\it exceptional divisor} $E \subset \rm{Bl}_Z(X)$ is defined to be $\rm{\pi}^{-1}(Z) = \ud{\rm{Proj}}^{\an}_Z  \big( \bigoplus_{d\geq 0} \cal{I}^d/\cal{I}^{d+1}\big) \subset \rm{Bl}_Z(X)$. 
\end{defn}

Now let $S=(A, A^+)$ be a strongly noetherian Tate affinoid affinoid, we wish to show that analytic blow-ups are compatible with the usual algebraic blow-ups. We start with the most general Proj-construction:

\begin{lemma}\label{lemma:proj-commutes-with-analytification} Let $S=\Spa(A, A^+)$ be as above, let $X$ be a locally finite type $A$-scheme with the relative analytification $c_{X/S} \colon (X^{\an/S}, \O_{X^{\an/S}}) \to (X, \O_X)$, and let $\cal{A}_\bullet$ be a locally coherent graded $\O_X$-algebra. Then there is a canonical isomorphism
\[
\psi_X \colon \ud{\rm{Proj}}_{X^{\an/S}}^{\an} \big(c_{X/S}^*(\cal{A}_\bullet)\big) \xrightarrow{\sim} \big(\ud{\rm{Proj}}_X \cal{A}_\bullet\big)^{\an/S}.
\]
such that, for every open $A$-subscheme $X'\subset X$, $\psi_X|_{X'^{\an/S}} =\psi_{X'^{\an/S}}$.
\end{lemma}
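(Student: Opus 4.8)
The plan is to reduce everything to the affine case over $\Spec A$, where the statement becomes a comparison between the analytification of an algebraic relative Proj and a relative Proj built from the pulled-back algebra. Concretely, I would first use Lemma~\ref{lemma:gluing-proj} to reduce to the case where $X$ is affine, say $X = \Spec R$ for a finite type $A$-algebra $R$; the compatibility clause with open subschemes $X' \subset X$ is exactly what makes this reduction legitimate, since both sides are glued from their restrictions to an affine cover and the gluing data on each side is dictated by the base-change isomorphisms $\psi_{U,V}$ of Lemma~\ref{lemma:proj-base-change}. So the heart of the matter is to produce, for $X = \Spec R$ affine, a canonical isomorphism $\ud{\rm{Proj}}_{X^{\an/S}}^{\an}\big(c_{X/S}^*\cal{A}_\bullet\big) \xrightarrow{\sim} \big(\ud{\rm{Proj}}_X \cal{A}_\bullet\big)^{\an/S}$ that is natural enough to glue.

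In the affine case, write $\cal{A}_\bullet = \widetilde{A_\bullet}$ for a coherent graded $R$-algebra $A_\bullet$. Here there is a subtlety: $X^{\an/S} = (\Spec R)^{\an/S}$ is typically not affinoid (it is only locally finite type over $S$), so $\ud{\rm{Proj}}^{\an}_{X^{\an/S}}$ of Definition~\ref{defn:proj-general} is itself a glued object. I would therefore take a further affinoid cover $X^{\an/S} = \bigcup_j V_j$ with $V_j = \Spa(B_j, B_j^+)$, and on each $V_j$ unravel: $c_{X/S}^*\cal{A}_\bullet|_{V_j}$ corresponds by Lemma~\ref{lemma:pullback-coherent}(\ref{lemma:pullback-coherent-2}) to the coherent graded $B_j$-algebra $A_\bullet \otimes_R B_j$, and Definition~\ref{defn:proj-affinoid} gives $\ud{\rm{Proj}}^{\an}_{V_j}\big(c_{X/S}^*\cal{A}_\bullet|_{V_j}\big) = \big(\ud{\rm{Proj}}_{\Spec B_j}(A_\bullet \otimes_R B_j)\big)^{\an/V_j}$. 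On the other side, $\big(\ud{\rm{Proj}}_X \cal{A}_\bullet\big)^{\an/S}$ restricted over $V_j$ is, by the base-change compatibility of relative analytification along $V_j \to S \to \Spec A$ together with $V_j \to \Spec R$, the $V_j$-analytification of $\ud{\rm{Proj}}_R(A_\bullet) \times_{\Spec R} \Spec B_j$. Now the algebraic base-change for Proj --- the same \href{https://stacks.math.columbia.edu/tag/01N2}{Tag 01N2} already used in Lemma~\ref{lemma:proj-base-change} --- identifies $\ud{\rm{Proj}}_{B_j}(A_\bullet\otimes_R B_j) \simeq \ud{\rm{Proj}}_R(A_\bullet)\times_{\Spec R}\Spec B_j$, so after applying $(-)^{\an/V_j}$ the two restrictions agree canonically.

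It remains to check that these local isomorphisms over the $V_j$ glue to a global one over $X^{\an/S}$, and that the resulting global isomorphism is compatible with open subschemes $X' \subset X$. Both are cocycle-type verifications: the first uses the "cocycle formula" in Lemma~\ref{lemma:proj-base-change} (the commuting square there) applied to the overlaps $V_j \cap V_k$, combined with \href{https://stacks.math.columbia.edu/tag/01MZ}{Tag 01MZ} on the algebraic side and the functoriality of relative analytification (Definition~\ref{defn:relative-analytification}, by its universal property); the second is immediate because the construction is manifestly natural in $X$ and both the left-hand and right-hand constructions were defined by gluing over affine opens of $X$ in a way compatible with the base-change isomorphisms. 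I expect the main obstacle to be purely bookkeeping: there are two independent layers of gluing (the affinoid cover of $S$, implicit in Definition~\ref{defn:proj-general}, and the affinoid cover of $X^{\an/S}$ needed because it is not affinoid), and one must be careful that the analytification functor $(-)^{\an/S}$ of Section~\ref{section:analytification}, a priori defined relative to the fixed affinoid base $S$, behaves well under passing to affinoid opens $V_j \subset X^{\an/S}$ and along the maps $V_j \to S$; this is where the universal property in Definition~\ref{defn:relative-analytification} and its compatibility with base change (the remark following it, via \cite[Prop.\,3.8]{H1}) must be invoked carefully. None of the individual steps is hard, but organizing the two-fold descent cleanly is the real content.
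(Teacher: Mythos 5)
Your proposal is correct and follows essentially the same route as the paper: reduce to affine $X$, cover $X^{\an/S}$ by affinoids $\Spa(B_j,B_j^+)$, identify the restriction of both sides over each affinoid using the factorization through $\Spec B_j$, the algebraic base change for Proj (Tag 01N2), and Definition~\ref{defn:proj-affinoid}, then glue via the cocycle compatibilities of Lemma~\ref{lemma:proj-base-change}. The only cosmetic difference is that the paper constructs $\psi_U$ for \emph{every} affinoid open $U\subset X^{\an/S}$ with the compatibility $\psi_U|_V=\psi_V$ (which sidesteps the issue that overlaps of a fixed cover need not be affinoid), rather than gluing over a chosen cover.
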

\begin{proof}
    First, we note that since we require $\psi_{X}$ to commute with open restrictions, we can construct them locally on $X$. In particular, we can assume that $X=\Spec B$ is an affine scheme. \smallskip
    
    Then we note that it suffices, for each affinoid open $U=\Spa(D, D^+) \subset X^{\an/S}$, to construct an isomorphim 
    \[
    \psi_U \colon \ud{\rm{Proj}}_{U}^{\an} \big(c_{X/S}^*(\cal{A}_\bullet)|_U\big) \xrightarrow{\sim} \big(\ud{\rm{Proj}}_X \cal{A}_\bullet\big)^{\an/S} \times_{X^{\an/S}} U
    \]
    such that $\psi_U|_V = \psi_V$ for each immersion of open affinoids $V\subset U\subset X^{\an/S}$. For this, we note that the composition $(U, \O_U) \to (X^{\an/S}, \O_{X^{\an/S}}) \to (X, \O_X)$ uniquely factors as
    \[
    \begin{tikzcd}
    (U, \O_U) \arrow{d}{j} \arrow{r}{c_{U}} & (\Spec D, \O_{\Spec D}) \arrow{d}{} \\
    (X^{\an/S}, \O_{X^{\an/S}}) \arrow{r}{c_{X/S}} & (X, \O_X),
    \end{tikzcd}
    \]
    where $c_U$ is the morphism described just above Definition~\ref{defn:relative-analytification}, and $\Spec D \to X=\Spec B$ is the morphism induced by the composition $B \to \O_{X^{\an/S}}(X^{\an/S}) \to \O_U(U)=D$. The universal properties of fiber products and relative analytifications imply that we have a functorial isomorphism 
    \begin{equation}\label{eqn:proj-analytification-base-change}
    \big(\ud{\rm{Proj}}_{X} \cal{A}_{\bullet} \times_X \Spec D \big)^{\an/U} \simeq \big(\ud{\rm{Proj}}_X \cal{A}_\bullet\big)^{\an/S} \times_{X^{\an/S}} U.
    \end{equation}
    Now we denote by $\cal{A}'_{\bullet}$ the pullback of $\cal{A}_{\bullet}$ to $\Spec D$. Using that the algebraic Proj construction commutes with base change, we conclude that isomorphism~(\ref{eqn:proj-analytification-base-change}) gives an isomorphism
    \begin{equation}\label{eqn:proj-analytification-base-change-2}
    \big(\ud{\rm{Proj}}_{\Spec D} \cal{A}'_\bullet\big)^{\an/U}\simeq \big(\ud{\rm{Proj}}_X \cal{A}_\bullet\big)^{\an/S} \times_{X^{\an/S}} U. 
    \end{equation}
    Using that $c_{X/S}^*(\cal{A}_\bullet)|_U \simeq c_U^*\cal{A}'_\bullet$ and Definition~\ref{defn:proj-affinoid}, we conclude that isomorphism~(\ref{eqn:proj-analytification-base-change-2}) becomes the desired functorial isomorphism 
    \[
        \psi_U \colon \ud{\rm{Proj}}_{U}^{\an} \big(c_{X/S}^*(\cal{A}_\bullet)|_U\big) \xrightarrow{\sim} \big(\ud{\rm{Proj}}_X \cal{A}_\bullet\big)^{\an/S} \times_{X^{\an/S}} U. \qedhere
    \]
\end{proof}

\begin{lemma}\label{lemma:blow-up-analytification} Let $S=\Spa(A, A^+)$ be as above, and let $Z\subset X$ be a closed immersion of locally finite type $A$-schemes with the relative analytification $Z^{\an/S} \to X^{\an/S}$. Then there is a canonical isomorphism of pairs
\[
\rm{Bl}_{Z^{\an/S}}(X^{\an/S}) \to \rm{Bl}_Z(X)^{\an/S}.
\]
Furthermore, this isomorphism identifies the exceptional divisor in $\rm{Bl}_{Z^{\an/S}}(X^{\an/S})$ with the analytification of the exceptional divisor in $\rm{Bl}_Z(X)$.
\end{lemma}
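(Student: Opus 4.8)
The strategy is to reduce the statement to Lemma~\ref{lemma:proj-commutes-with-analytification} applied to the Rees algebra, the only real input being that relative analytification commutes with the formation of Rees algebras and of associated graded algebras. Write $\cal{I}_Z\subset\O_X$ for the ideal sheaf of $Z\subset X$; it is coherent and, since $X$ is a locally noetherian scheme, the Rees algebra $\cal{R}_\bullet\coloneqq\bigoplus_{d\geq 0}\cal{I}_Z^d$ is a locally coherent graded $\O_X$-algebra (locally it is a finitely generated, noetherian algebra generated in degree $\leq 1$). Moreover, $c_{X/S}$ is flat by Lemma~\ref{lemma:flatness}, so pulling back the exact sequence $0\to\cal{I}_Z\to\O_X\to\O_Z\to 0$ shows, exactly as in the proof of Corollary~\ref{cor:lci-relative-analytification}, that $Z^{\an/S}\subset X^{\an/S}$ is the Zariski-closed subspace with coherent ideal sheaf $\cal{I}_{Z^{\an/S}}=c_{X/S}^*\cal{I}_Z$; in particular the analytic blow-up $\rm{Bl}_{Z^{\an/S}}(X^{\an/S})=\ud{\rm{Proj}}^{\an}_{X^{\an/S}}\big(\bigoplus_{d\geq 0}\cal{I}_{Z^{\an/S}}^d\big)$ is defined.

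\emph{Key claim.} The canonical maps
\[
c_{X/S}^*\cal{R}_\bullet=c_{X/S}^*\Big(\bigoplus_{d\geq 0}\cal{I}_Z^d\Big)\longrightarrow\bigoplus_{d\geq 0}\cal{I}_{Z^{\an/S}}^d,\qquad c_{Z/S}^*\Big(\bigoplus_{d\geq 0}\cal{I}_Z^d/\cal{I}_Z^{d+1}\Big)\longrightarrow\bigoplus_{d\geq 0}\cal{I}_{Z^{\an/S}}^d/\cal{I}_{Z^{\an/S}}^{d+1}
\]
are isomorphisms of graded $\O_{X^{\an/S}}$- (resp.\ $\O_{Z^{\an/S}}$-) algebras. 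Both assertions are local on $X$ (resp.\ $Z$), and both $c_{X/S}$ and $c_{Z/S}$ are flat by Lemma~\ref{lemma:flatness}. Locally we may take $X=\Spec B$ with $B$ noetherian, $I=\cal{I}_Z(X)\subset B$, and, on an affinoid piece $\Spa(D,D^+)\subset X^{\an/S}$, the map $c_{X/S}$ is induced by a flat ring homomorphism $B\to D$. For each $d$, flatness makes $I^d\otimes_B D\to B\otimes_B D=D$ injective with image $I^dD=(ID)^d$; sheafifying, $c_{X/S}^*(\cal{I}_Z^d)\to\O_{X^{\an/S}}$ is injective with image $\cal{I}_{Z^{\an/S}}^d$, and these identifications are compatible with the multiplication maps, which gives the first isomorphism. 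For the second, pull back the exact sequences $0\to\cal{I}_Z^{d+1}\to\cal{I}_Z^d\to\cal{I}_Z^d/\cal{I}_Z^{d+1}\to 0$ along the flat morphism $c_{X/S}$; since $\cal{I}_Z^d/\cal{I}_Z^{d+1}$ is an $\O_Z$-module, its pullback along $c_{X/S}$ agrees with its pullback along $c_{Z/S}$ through $Z^{\an/S}\hookrightarrow X^{\an/S}$, and combining with the first isomorphism yields $c_{Z/S}^*(\cal{I}_Z^d/\cal{I}_Z^{d+1})\cong\cal{I}_{Z^{\an/S}}^d/\cal{I}_{Z^{\an/S}}^{d+1}$, compatibly with the algebra structures.

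Granting the claim, Lemma~\ref{lemma:proj-commutes-with-analytification} applied to the scheme $X$ and $\cal{A}_\bullet=\cal{R}_\bullet$ produces a canonical isomorphism
\[
\rm{Bl}_{Z^{\an/S}}(X^{\an/S})=\ud{\rm{Proj}}^{\an}_{X^{\an/S}}\big(c_{X/S}^*\cal{R}_\bullet\big)\xrightarrow{\ \sim\ }\big(\ud{\rm{Proj}}_X\cal{R}_\bullet\big)^{\an/S}=\rm{Bl}_Z(X)^{\an/S},
\]
which is an isomorphism over $X^{\an/S}$; this is the first assertion. For the second, the algebraic exceptional divisor is $E=\pi^{-1}(Z)\subset\rm{Bl}_Z(X)$, with $\pi$ the structure map, and the analytic one is $E^{\an}=\pi_{\an}^{-1}(Z^{\an/S})\subset\rm{Bl}_{Z^{\an/S}}(X^{\an/S})$. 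Since the displayed isomorphism is over $X^{\an/S}$, it carries $E^{\an}$ to the preimage of $Z^{\an/S}$ under $\pi^{\an/S}\colon\rm{Bl}_Z(X)^{\an/S}\to X^{\an/S}$, and this preimage is exactly $E^{\an/S}$: indeed the analytification morphism $\rm{Bl}_Z(X)^{\an/S}\to\rm{Bl}_Z(X)$ is flat (Lemma~\ref{lemma:flatness}), so $E^{\an/S}$ is the Zariski-closed subspace of $\rm{Bl}_Z(X)^{\an/S}$ cut out by the pullback of the ideal $\pi^*\cal{I}_Z$, i.e.\ by $(\pi^{\an/S})^*\cal{I}_{Z^{\an/S}}$, just as in the first paragraph. (Alternatively, identify $E$ with $\ud{\rm{Proj}}_Z\big(\bigoplus_d\cal{I}_Z^d/\cal{I}_Z^{d+1}\big)$ and $E^{\an}$ with $\ud{\rm{Proj}}^{\an}_{Z^{\an/S}}\big(\bigoplus_d\cal{I}_{Z^{\an/S}}^d/\cal{I}_{Z^{\an/S}}^{d+1}\big)$, apply Lemma~\ref{lemma:proj-commutes-with-analytification} to $Z$ and the second half of the key claim, and use the compatibility clause in Lemma~\ref{lemma:proj-commutes-with-analytification} together with the surjection $\cal{R}_\bullet\otimes_{\O_X}\O_Z\twoheadrightarrow\bigoplus_d\cal{I}_Z^d/\cal{I}_Z^{d+1}$ inducing $E\hookrightarrow\rm{Bl}_Z(X)$ to match the two identifications.) The only genuine work is the key claim, and even there the difficulty is merely bookkeeping: organizing the flat base-change isomorphisms at the level of graded sheaves of algebras and matching them with the isomorphisms furnished by Lemma~\ref{lemma:proj-commutes-with-analytification}.
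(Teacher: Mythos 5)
Your proposal is correct and follows the same route as the paper: flatness of the analytification morphism (Lemma~\ref{lemma:flatness}) identifies $c_{X/S}^*(\cal{I}_Z^d)$ with $\cal{I}_{Z^{\an/S}}^d$, and then Lemma~\ref{lemma:proj-commutes-with-analytification} applied to the Rees algebra gives the isomorphism of blow-ups. You simply spell out in more detail the graded-algebra bookkeeping and the identification of exceptional divisors that the paper leaves implicit.
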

\begin{proof}
    Let us denote by $c_{X/S} \colon \big(X^{\an/S}, \O_{X^{\an/S}}\big) \to \big(X, \O_X\big)$ the relative analytification morphism, and by $\cal{I}_Z$ and $\cal{I}_{Z^{\an/S}}$ the ideal sheaves of the Zariski-closed immersions $Z\subset X$ and $Z^{\an/S} \subset X^{\an/S}$ respectively. Then Lemma~\ref{lemma:flatness} implies that $c_{X/S}^*(\cal{I}_Z^d) = \cal{I}_{Z^{\an/S}}^d$ for every $d\geq 1$. Now Lemma~\ref{lemma:proj-commutes-with-analytification} automatically implies the desired claim. 
\end{proof}

\section{Line bundles on the relative projective bundle}\label{section:line-bundles}

In this section, we study line bundles on a relative projective bundle $\bf{P}_S(\cal{E}) \to S$ for any locally noetherian analytic adic space $S$ and a vector bundle $\cal{E}$ on $S$. The main goal is to prove the following theorem: \smallskip

\begin{thm}\label{thm:relative-picard-projective-line} Let $S$ be a connected locally noetherian analytic adic space, let $\cal{E}$ be a vector bundle on $S$ of rank at least $2$, and let $f\colon P\coloneqq \bf{P}_S(\cal{E}) \to S$ be the corresponding projective bundle. Then the natural morphism
\[
\rm{Pic}\left(S\right) \bigoplus \Z \to \rm{Pic}\left(P\right)
\]
defined by the rule
\[
(\cal{L}, n) \mapsto f^*\cal{L} \otimes \O_{P/S}(n)
\]
is an isomorphism. 
\end{thm}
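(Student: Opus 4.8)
The plan is to follow the classical algebraic template (as in \cite[\href{https://stacks.math.columbia.edu/tag/01O4}{Tag 01O4}]{stacks-project}), adapting the steps to the analytic setting, using that $\bf{P}_S(\cal{E})$ commutes with base change (Remark~\ref{rmk:proj-base-change}), that $\O_{P/S}(1)$ is a line bundle since $\rm{Sym}^\bullet_S\cal{E}$ is generated in degree $1$ (Remark~\ref{rmk:universal-line-bundle}), and the theory of connected components from Section~\ref{section:components}. First I would check that the map is well-defined and a homomorphism, which is formal. The injectivity and surjectivity both rest on one computation: for each point $s$ of $S$ with residue field $K(s)$, the fiber $P_s = \bf{P}_{K(s)}(\cal{E}_s)$ is a projective space $\bf{P}^{r-1}_{K(s)}$ (where $r = \rk\cal{E}$), and one needs $\rm{Pic}(\bf{P}^{r-1}_{K(s)}) = \Z\cdot\O(1)$ together with $f_*\O_P = \O_S$ and $\rm{R}^1f_*\O_P = 0$. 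The first is the analytic analogue of the computation of the Picard group of projective space over a non-archimedean field (which can be obtained by GAGA/analytification, Remark~\ref{rmk:analytification-proper}, reducing to the algebraic statement since $P_s$ is proper); the cohomology vanishing statements come from the analogous algebraic facts for $\ud{\rm{Proj}}$ together with the comparison of coherent cohomology under analytification (compatible with the fact that $c_{X/S}$ is flat, Lemma~\ref{lemma:flatness}).

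For \textbf{injectivity}: suppose $f^*\cal{L}\otimes\O_{P/S}(n) \cong \O_P$. Restricting to a fiber $P_s$ and using $\rm{Pic}(P_s) = \Z\cdot\O_{P_s}(1)$ forces $n = 0$. Then $f^*\cal{L}\cong\O_P$, and applying $f_*$ together with the projection formula and $f_*\O_P = \O_S$ gives $\cal{L}\cong\O_S$. Here I should be a little careful that $f_*\O_P = \O_S$ holds after arbitrary base change, which follows from the base-change-compatibility of $\O_{P/S}$ and the coherent base change for the proper morphism $f$ (analytic GAGA comparison again, locally on $S$ where everything is an analytification).

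For \textbf{surjectivity}: given $\cal{M}\in\rm{Pic}(P)$, I want to produce $\cal{L}$ and $n$ with $\cal{M}\cong f^*\cal{L}\otimes\O_{P/S}(n)$. The standard move: the function $s\mapsto$ (the integer $n_s$ such that $\cal{M}|_{P_s}\cong\O_{P_s}(n_s)$) is locally constant on $S$, hence constant since $S$ is connected — this is \emph{the step where connectedness enters}, and it is the analogue of \cite[\href{https://stacks.math.columbia.edu/tag/0BXM}{Tag 0BXM}]{stacks-project}; local constancy follows from semicontinuity / the fact that $\cal{M}\otimes\O_{P/S}(-n)$ is trivial on a fiber iff $f_*$ of it and $f_*$ of its dual are both invertible there, an open condition, combined with Lemma~\ref{lemma:locally-connected} ensuring $S$ is locally connected. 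Set $n := n_s$ (the common value) and $\cal{L} := f_*\big(\cal{M}\otimes\O_{P/S}(-n)\big)$; then one checks $\cal{L}$ is a line bundle (it is invertible on each fiber and $f_*$ commutes with base change here) and that the natural adjunction map $f^*\cal{L} \to \cal{M}\otimes\O_{P/S}(-n)$ is an isomorphism — again checkable fiberwise, where it reduces to the statement $\rm{H}^0(\bf{P}^{r-1}_{K(s)},\O)\otimes_{K(s)}\O_{\bf{P}^{r-1}}(0) \xrightarrow{\sim}\O_{\bf{P}^{r-1}}$, i.e. just $\rm{H}^0 = K(s)$.

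\textbf{Main obstacle.} I expect the real work to be the fiberwise input: establishing $\rm{Pic}\big(\bf{P}_{K(s)}(\cal{E}_s)\big) = \Z$ and $f_*\O_P = \O_S$, $\rm{R}^1 f_*\O_P = 0$ \emph{with base change}, for the adic projective bundle over an arbitrary analytic field $K(s)$ (not necessarily of finite type over a base). The clean route is analytification: $\bf{P}_{K(s)}(\cal{E}_s)$ is the relative analytification of the algebraic projective space, which is proper (Remark~\ref{rmk:analytification-proper}), so rigid/adic GAGA identifies its coherent cohomology and its Picard group with the algebraic ones, and the algebraic statements are classical. The one subtlety is that $K(s)$ need not be discretely valued or of finite type, so one wants a form of GAGA and of the coherence comparison valid for all strongly noetherian Tate (or the analytification comparison of Lemma~\ref{lemma:flatness}) — but since $P$ is covered by analytifications of the affine charts of algebraic $\bf{P}^{r-1}$ over $\O_S(S)$, and the flatness of $c_{X/S}$ lets one compare ideal sheaves and hence, via the exact sequences of Section~\ref{section:sheaves}, cohomology, this should go through by reducing everything to the affinoid case and quoting the coherence results of Theorem~\ref{thm:huber-coherent-sheaves} and the properness from Remark~\ref{rmk:proj-proper}. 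I would localize on $S$ to the affinoid case throughout and assemble the global statement at the end via the gluing already built into Definition~\ref{defn:proj-general}.
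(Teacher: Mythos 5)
Your proposal is correct and follows essentially the same route as the paper: reduce the fiberwise and pushforward computations to the algebraic case via relative GAGA over an affinoid base (the paper's Lemma~\ref{lemma:GAGA} and Lemma~\ref{lemma:locally-constant-n}), compute $\rm{Pic}$ of the fiber over a non-archimedean field, deduce local constancy of the degree $n_s$ and use connectedness of $S$ (via Lemma~\ref{lemma:locally-connected} and Corollary~\ref{cor:locally-constant-n-general}), then recover $\cal{L}$ as $f_*(\cal{M}\otimes\O_{P/S}(-n))$. The only cosmetic difference is that the paper packages the cohomology-and-base-change input entirely into the GAGA reduction rather than arguing by semicontinuity, but the substance is identical.
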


Let us begin with the case of a strongly noetherian Tate affinoid $S=\Spa(A, A^+)$ and a trivial vector bundle $\cal{E}=\O_S^{\oplus d+1}$ for a positive integer $d$. In this case, the relative projective space $\bf{P}^d_S \to S$ is the relative analytification of the relative algebraic projective space $\bf{P}^{d, \rm{alg}}_A \to \Spec A$. In particular, there is the analytification morphism
\[
i\colon \bf{P}^{d}_{S} \to \bf{P}^{d, \rm{alg}}_A.
\]

\begin{lemma}\label{lemma:GAGA} In the notation as above, the natural morphism
\[
\rm{Pic}\big(\bf{P}^{d, \rm{alg}}_A\big) \to \rm{Pic}\left(\bf{P}^d_S\right)
\]
is an isomorphism.
\end{lemma}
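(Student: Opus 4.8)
The plan is to compute $\mathrm{Pic}$ of the algebraic projective space via the standard GAGA-type comparison over the affinoid base, using that the analytification morphism $i\colon \bf{P}^d_S \to \bf{P}^{d,\mathrm{alg}}_A$ is the adic generic fiber of a formal completion, and hence is the sort of map to which a rigid GAGA theorem applies. Concretely, after choosing a ring of definition $A_0 \subset A^+$ and a pseudo-uniformizer $\varpi$, the relative algebraic $\bf{P}^d_A$ is the generic fiber of the $\varpi$-adic completion of $\bf{P}^d_{A_0}$, and the analytification morphism is the map $c_{X/S}$ studied in Section~\ref{section:analytification}. So the main input is a GAGA statement comparing coherent sheaves (in particular line bundles) on a proper $A$-scheme with those on its analytification.

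First I would reduce the statement about line bundles to a statement about all coherent sheaves: the morphism $\mathrm{Pic}(\bf{P}^{d,\mathrm{alg}}_A) \to \mathrm{Pic}(\bf{P}^d_S)$ factors through the pullback functor $c^*_{X/S}\colon \mathbf{Coh}(\bf{P}^{d,\mathrm{alg}}_A) \to \mathbf{Coh}(\bf{P}^d_S)$, and I would invoke (relative) GAGA to say this functor is an equivalence of categories, compatible with tensor products and duals, hence restricts to an equivalence on the Picard groupoids. The relative GAGA theorem in this generality is essentially \cite[Theorem 5.3.1]{Conrad2007} (for rigid spaces) combined with Remark~\ref{rmk:analytification-proper} (which says analytification of a proper scheme is proper) and Lemma~\ref{lemma:flatness} (flatness of $c_{X/S}$, which gives that $c^*_{X/S}$ is exact and in particular sends locally free sheaves to locally free sheaves); the formal-scheme incarnation is Fujiwara--Kato GAGA, and the translation between the formal generic fiber $\widehat{\ov X}_\eta$ and $X^{\mathrm{an}/S}$ is exactly the isomorphism $i_X$ appearing in the proof of Lemma~\ref{lemma:flatness}. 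Alternatively, one can argue directly: line bundles on $\bf{P}^d_S$ can be computed \v{C}ech-theoretically on the standard affinoid cover $\bf{P}^d_S = \bigcup_{i=0}^d U_i$ with $U_i \cong \bf{D}^d_S$, and the cocycle condition lands in $\O(U_i \cap U_j)^\times$; one then identifies these unit groups with their algebraic counterparts using that each $U_i \cap U_j$ is the analytification of the corresponding algebraic chart and that $\O^\times$ of the analytification of an affine $A$-scheme of the form $\Spec A[T_1^{\pm},\dots]$ is computed, e.g., by explicit inspection of power series with invertible leading term over a noetherian affinoid. I would prefer the first (GAGA) route as it is cleaner and more robust.

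The main obstacle is locating or invoking the precise form of relative GAGA needed: the literature statements (Conrad, H3, Fujiwara--Kato) are each phrased in a slightly different framework (rigid spaces, adic spaces, formal schemes), and one must check that the proper $A$-scheme $\bf{P}^d_A$ together with its analytification fits all of the hypotheses—in particular that $A$ being a strongly noetherian Tate ring (not necessarily a $K$-affinoid algebra) does not cause trouble. This is handled exactly as in the proof of Lemma~\ref{lemma:flatness}: pick $A_0 \subset A^+$ and $\varpi$, realize $\bf{P}^d_A$ as the generic fiber of a $\varpi$-adic formal scheme, and apply the formal GAGA of \cite{FujKato}, whose hypotheses are met since $A_0$ is $\varpi$-adically complete and noetherian and $\bf{P}^d_{A_0}$ is proper (indeed projective) over $A_0$. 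Once GAGA is in hand, the conclusion that $c^*_{X/S}$ is an equivalence of symmetric monoidal categories, and hence induces an isomorphism on Picard groups, is formal. I would then close by remarking that this is the base case that will be bootstrapped, via descent along the affinoid cover of $S$ and the transition to arbitrary vector bundles, to the full Theorem~\ref{thm:relative-picard-projective-line}.
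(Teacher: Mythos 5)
Your proposal matches the paper's proof: the paper simply invokes the Fujiwara--Kato GAGA theorem to get a symmetric monoidal equivalence $i^*\colon \bf{Coh}(\bf{P}^{d,\rm{alg}}_A) \to \bf{Coh}(\bf{P}^d_S)$ and identifies line bundles with invertible objects, which is exactly your preferred (first) route. The extra scaffolding you supply---passing through the $\varpi$-adic formal completion of $\bf{P}^d_{A_0}$ as in the proof of Lemma~\ref{lemma:flatness}---is the correct way to place the statement within the Fujiwara--Kato framework, so the argument is sound and essentially identical to the paper's.
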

\begin{proof}
    This follows directly from Lemma~\ref{lemma:relative-GAGA}. 
\end{proof}

Lemma~\ref{lemma:GAGA} essentially proves Theorem~\ref{thm:relative-picard-projective-line} for the relative projective space over an affinoid base. However, in order to globalize the result, we will need to do some extra work. 

\begin{cor}\label{cor:case-of-a-field} Let $K$ be a non-archimedean field with an open bounded valuation subring $K^+\subset K$, let $S=\Spa(K, K^+)$, and let $f\colon  \bf{P}^d_S \to S$ be the relative projective space of relative dimension $d\geq 1$. Then the natural morphism
\[
\Z \to \rm{Pic}\left(\bf{P}^d_S\right),
\]
defined by the rule
\[
n \mapsto \O_{\bf{P}^d_S/S}(n),
\]
is an isomorphism. 
\end{cor}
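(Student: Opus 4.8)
The plan is to deduce Corollary~\ref{cor:case-of-a-field} from Lemma~\ref{lemma:GAGA} by reducing the Picard group of the algebraic projective space $\bf{P}^{d,\rm{alg}}_K$ over the (possibly non-noetherian, non-field) base ring $K$ to the classical computation $\rm{Pic}(\bf{P}^d_k)=\Z$ over a field $k$. Here $A=K$ is a non-archimedean field, so in particular $K$ is a field, and the algebraic base $\Spec K$ is a point; thus $\bf{P}^{d,\rm{alg}}_K$ is genuinely the projective space over the field $K$, and $\rm{Pic}\big(\bf{P}^{d,\rm{alg}}_K\big)\cong\Z$ generated by $\O(1)$ by the standard computation (e.g.\ \cite[\href{https://stacks.math.columbia.edu/tag/01N9}{Tag 01N9}]{stacks-project}). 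So the actual content is just assembling two known facts.

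First I would invoke Lemma~\ref{lemma:GAGA} with $A=K$ and the trivial bundle $\cal{E}=\O_S^{\oplus d+1}$, which gives that the analytification pullback $i^*\colon \rm{Pic}\big(\bf{P}^{d,\rm{alg}}_K\big)\to \rm{Pic}\big(\bf{P}^d_S\big)$ is an isomorphism. Second, I would recall that $\rm{Pic}\big(\bf{P}^{d,\rm{alg}}_K\big)\cong\Z$, with the generator being the algebraic Serre twisting sheaf $\O(1)$. Composing, $\rm{Pic}\big(\bf{P}^d_S\big)\cong\Z$. Finally I would check that the composite isomorphism sends $1\in\Z$ to $\O_{\bf{P}^d_S/S}(1)$: this is exactly the compatibility built into Remark~\ref{rmk:universal-line-bundle} and Definition~\ref{defn:proj-affinoid}, namely that $\O_{\bf{P}^d_S/S}(1)$ is \emph{defined} as the relative analytification (i.e.\ the $i^*$-pullback) of the algebraic $\O(1)$. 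Hence the map $n\mapsto \O_{\bf{P}^d_S/S}(n)=\O_{\bf{P}^d_S/S}(1)^{\otimes n}$ is the asserted isomorphism $\Z\xrightarrow{\sim}\rm{Pic}\big(\bf{P}^d_S\big)$.

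I do not anticipate a genuine obstacle here; the one point requiring a little care is purely bookkeeping — making sure the generator matches up, i.e.\ that $i^*$ of the algebraic $\O(1)$ is the object we have been calling $\O_{\bf{P}^d_S/S}(1)$, and that $i^*$ is monoidal so that powers go to powers. Both are immediate from how $\O_{P/S}(1)$ was set up in Remark~\ref{rmk:universal-line-bundle} and from the fact (used in the proof of Lemma~\ref{lemma:GAGA}) that $i^*$ respects tensor products. One could alternatively avoid even citing the classical $\rm{Pic}(\bf{P}^d_K)=\Z$ by noting that $K^+$ plays no role on the algebraic side, but it is cleanest just to quote the Stacks project computation over the field $K$.
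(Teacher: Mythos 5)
Your proposal is correct and is exactly the paper's argument: the paper's proof also consists of citing Lemma~\ref{lemma:GAGA} together with the standard computation $\rm{Pic}\big(\bf{P}^{d,\rm{alg}}_K\big)\simeq \Z[\O(1)]$. Your extra remarks about matching the generator via Remark~\ref{rmk:universal-line-bundle} are correct bookkeeping that the paper leaves implicit.
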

\begin{proof}
    This follows directly from Lemma~\ref{lemma:GAGA} and the standard algebraic computation \[
    \rm{Pic}\left(\bf{P}^d_K\right)\simeq \Z[\O(1)]. \qedhere
    \]
\end{proof}

\begin{notation} Suppose that $f\colon \bf{P}_S(\cal{E}) \to S$ is a relative projective bundle over $S$ and $x\in S$ is a point. We will denote by $\bf{P}_x(\cal{E})$ the fiber product
\[
\bf{P}_S(\cal{E}) \times_S \Spa\left(K(x), K(x)^+\right)
\]
and call it the {\it fiber over $x$}. 
\end{notation}

\begin{warning} Unless $x$ is a rank-$1$ point, the underlying topological space
\[
|\Spa\left(K(x), K(x)^+\right)|
\]
is not just one point $\{x\}$. Instead, it is the set of all generalizations of $x$. In particular, the adic space $\bf{P}_x(\cal{E})$ is not literally the fiber over $x$ unless $x$ is of rank-$1$.
\end{warning}

\begin{lemma}\label{lemma:locally-constant-n} Let $S=\Spa(A, A^+)$ be a connected strongly noetherian Tate affinoid, let $f\colon \bf{P}^d_S \to S$ be the relative projective space of relative dimension $d\geq 1$, and let $\cal{N}$ be a line bundle on $\bf{P}^d_S$. Suppose that there is a point $x\in S$ such that \[
\cal{N}|_{\bf{P}^d_{x}} \simeq \O.
\]
Then 
\begin{enumerate}
    \item $f_*\cal{N}$ is a line bundle on $S$;
    \item the natural morphism $f^*f_*\cal{N} \to \cal{N}$ is an isomorphism.
\end{enumerate}
In particular, the restriction of $\cal{N}$ onto any fiber is trivial. 
\end{lemma}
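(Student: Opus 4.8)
\textbf{Proof strategy for Lemma~\ref{lemma:locally-constant-n}.}

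The plan is to mimic the classical algebraic argument that the higher direct images of $\O_{\bf{P}^d}(m)$ along $\bf{P}^d \to \Spec A$ are locally free and commute with base change, and then use this together with the hypothesis at the point $x$ to force $\cal{N}$ to be pulled back from $S$. First, I would reduce to computing $\rm{R}f_*\cal{N}$ and showing it is (shifted) a line bundle on $S$. The key input is a semicontinuity/cohomology-and-base-change statement in the adic setting: for each point $s\in S$, one has $\cal{N}|_{\bf{P}^d_s}$ is a line bundle on $\bf{P}^d_{K(s)}$, hence by Corollary~\ref{cor:case-of-a-field} it is isomorphic to $\O(n(s))$ for a unique integer $n(s)$. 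The heart of the argument is to show $n(s)$ is locally constant on $S$, hence constant (as $S$ is connected), hence $\equiv 0$ by the hypothesis $\cal{N}|_{\bf{P}^d_x}\simeq\O$.

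To prove that $n(s)$ is locally constant, I would argue as follows. Since the statement is local on $S$, fix $s_0\in S$ with $n(s_0)=n_0$. Consider the coherent sheaf $\cal{N}(-n_0)\coloneqq \cal{N}\otimes \O_{\bf{P}^d_S/S}(-n_0)$, whose restriction to $\bf{P}^d_{s_0}$ is trivial; replacing $\cal{N}$ by $\cal{N}(-n_0)$ we may assume $n(s_0)=0$. Now I want to show that $n(s)=0$ for all $s$ in an open neighborhood of $s_0$. Using that $\bf{P}^d_S$ is the relative analytification of $\bf{P}^{d,\rm{alg}}_A$, one has the GAGA equivalence (as in Lemma~\ref{lemma:GAGA}), but crucially one needs a relative/base-changed version; alternatively, work directly on the adic side. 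The cleanest route: since $H^0(\bf{P}^d_{s_0}, \cal{N})\ne 0$ and $H^1(\bf{P}^d_{s_0},\cal{N})=0$ (as $\cal{N}|_{\bf{P}^d_{s_0}}\simeq\O$ and $H^1(\bf{P}^d,\O)=0$), one uses a coherent base-change/semicontinuity theorem for the proper morphism $f\colon \bf{P}^d_S\to S$ — which holds here because $\bf{P}^d_S$ is proper over the noetherian affinoid $S$ and $\cal{N}$ is coherent (this is the analytic analogue of EGA III, available e.g.\ via the formal-schemes GAGA of \cite{FujKato} used already in Lemma~\ref{lemma:flatness} and Lemma~\ref{lemma:GAGA}) — to conclude that in a neighborhood $U\ni s_0$, $f_*\cal{N}|_U$ is a line bundle, its formation commutes with base change, and $\rm{R}^1f_*\cal{N}|_U=0$. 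Shrinking $U$ so that $f_*\cal{N}|_U\cong \O_U$, the adjunction unit $f^*f_*\cal{N}\to\cal{N}$ restricts on each fiber $\bf{P}^d_s$ ($s\in U$) to a nonzero map $\O\to \cal{N}|_{\bf{P}^d_s}=\O(n(s))$; a nonzero map $\O\to\O(n(s))$ on $\bf{P}^d$ forces $n(s)\ge 0$, and running the same argument with $\cal{N}^{-1}$ (whose fiberwise degree is $-n(s)$) gives $n(s)\le 0$, so $n(s)=0$ on $U$. Hence $n(\cdot)$ is locally constant.

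Once $n(s)\equiv 0$ on the connected $S$, I would finish as follows. The coherent base change theorem now gives globally that $f_*\cal{N}$ is a line bundle on $S$ (proving (1)): indeed $h^0(\bf{P}^d_s,\cal{N}|_{\bf{P}^d_s})=h^0(\bf{P}^d,\O)=1$ is constant and $\rm{R}^1f_*\cal{N}$ vanishes fiberwise, so $f_*\cal{N}$ is locally free of rank $1$ and its formation commutes with base change. For (2), consider the unit map $\alpha\colon f^*f_*\cal{N}\to\cal{N}$. Both sides are line bundles on $\bf{P}^d_S$ (the left side because $f_*\cal{N}$ is a line bundle on $S$), so $\alpha$ is an isomorphism if and only if it is an isomorphism after restriction to every fiber $\bf{P}^d_s$ — this fiberwise criterion is where I would invoke that a coherent-sheaf map between line bundles on $\bf{P}^d_S$ which is fiberwise an isomorphism is an isomorphism, using Nakayama along the support together with properness. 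On the fiber $\bf{P}^d_s$, by base change $\alpha|_{\bf{P}^d_s}$ is the unit $H^0(\bf{P}^d_s,\O)\otimes\O\to\O$, which is the identity, hence an isomorphism. Therefore $\alpha$ is an isomorphism, proving (2); the final ``in particular'' is immediate since $\cal{N}\simeq f^*(f_*\cal{N})$ restricts trivially on each fiber.

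\textbf{Main obstacle.} The crux is the cohomology-and-base-change (and semicontinuity) machinery for the proper morphism $\bf{P}^d_S\to S$ of locally noetherian analytic adic spaces. In the algebraic world this is EGA III; here one must either cite the formal-model GAGA of Fujiwara--Kato (as is done elsewhere in the paper) to transport the statement from a suitable formal/algebraic model, or establish directly the needed finiteness and base-change properties of $\rm{R}f_*$ for coherent sheaves under proper maps of analytic adic spaces. Everything else — reducing to the affinoid case, the degree computation on $\bf{P}^d$ over a field via Corollary~\ref{cor:case-of-a-field}, the locally-constant-to-constant step via connectedness, and the fiberwise isomorphism criterion — is routine once that tool is in hand.
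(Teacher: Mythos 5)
Your overall strategy is sound, but it takes a genuinely different and considerably heavier route than the paper. The paper's proof is a one‑paragraph reduction: by Lemma~\ref{lemma:GAGA} and the relative GAGA theorem \cite[Thm.\,II.9.4.1]{FujKato}, the line bundle $\cal{N}$, the pushforward $f_*\cal{N}$, and the adjunction unit all come from the algebraic relative projective space $g\colon \bf{P}^{d,\rm{alg}}_A \to \Spec A$, so the entire statement becomes the classical EGA~III / seesaw‑type fact about line bundles on $\bf{P}^d$ over a connected noetherian affine base; the only adic input beyond GAGA is that $\Spec A$ is connected, which is Lemma~\ref{lemma:connected-different}. You instead propose to run the cohomology‑and‑base‑change argument (semicontinuity, vanishing of $\rm{R}^1f_*$ fiberwise, local constancy of the fiberwise degree $n(s)$, and the fiberwise criterion for the unit map to be an isomorphism) \emph{directly on the adic side}. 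That argument is morally the same as the classical one, but the tool you need for it --- a semicontinuity and cohomology‑and‑base‑change theorem for coherent sheaves under proper morphisms of locally noetherian analytic adic spaces, including base change to the non‑flat maps $\Spa(K(s),K(s)^+)\to S$ --- is exactly what the paper avoids having to develop, and you correctly flag it as the main obstacle without actually supplying it. So as written your proposal has a real dependency left open; the cheapest way to discharge it is precisely the paper's move: transfer everything to $\bf{P}^{d,\rm{alg}}_A$ at the very start (checking along the way that the adic fiber condition at $x$ translates to the algebraic fiber over $\rm{supp}(x)\in\Spec A$), and let EGA~III do the work. What your route would buy, if the adic base‑change machinery were in place, is a proof intrinsic to adic geometry and the local constancy of $n(s)$ as a byproduct (which the paper instead extracts afterwards in Corollary~\ref{cor:locally-constant-n-general}).
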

\begin{proof}
    Using Lemma~\ref{lemma:GAGA} and the GAGA Theorem (see \cite[Thm.\,II.9.4.1]{FujKato}), we easily reduce the claim to an analogous claim for the algebraic relative projective space
    \[
    g\colon \bf{P}^{d, \rm{alg}}_A \to \Spec A.
    \]
    Then the results are well-known (for example, they can be deduced from \cite[Prop.~11.4.2]{Gabber-Ramero}) as long as we know that $\Spec A$ is connected. However, connectedness of $\Spec A$ follows from Lemma~\ref{lemma:connected-different}. 
\end{proof}

\begin{cor}\label{cor:locally-constant-n-general} Let $S$ be a locally noetherian analytic adic space, let $\cal{E}$ be a vector bundle on $S$ of rank at least $2$, let  $f\colon \bf{P}_S(\cal{E}) \to S$ be the associated projective bundle, and let $\cal{N}$ be a line bundle on $\bf{P}_S(\cal{E})$. For each integer $n$, let $E_n(\cal{N})$ be the set
\[
E_{n}(\cal{N}) \coloneqq \{ x\in S \ | \ \cal{N}|_{\bf{P}_x(\cal{E})} \simeq \O(n)\} \subset S.
\]
Then $E_{n}(\cal{N})$ is a clopen subset of $S$ for each integer $n$.
\end{cor}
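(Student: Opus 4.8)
The plan is to reduce to the affinoid case and then to the case of a trivial vector bundle, so that we can invoke Lemma~\ref{lemma:locally-constant-n}. First I would note that the question is local on $S$: being clopen can be checked on any open cover, and the formation of $\bf{P}_S(\cal{E})$ and $\O_{P/S}(n)$ commutes with base change along opens $U \subset S$, so $E_n(\cal{N})\cap U = E_n(\cal{N}|_{\bf{P}_U(\cal{E})})$. Hence we may assume $S=\Spa(A,A^+)$ is a strongly noetherian Tate affinoid. Shrinking further, we may assume $\cal{E}$ is trivial, i.e.\ $\bf{P}_S(\cal{E}) = \bf{P}^d_S$; here one must be slightly careful, since the trivialization $\cal{E}|_U \simeq \O_U^{\oplus d+1}$ does not canonically identify $\bf{P}_U(\cal{E})$ with $\bf{P}^d_U$, but any such identification carries $\O_{P/S}(1)$ to $\O_{\bf{P}^d_U/U}(1)$ up to twist by a line bundle pulled back from $U$, which does not affect whether $E_n$ is clopen (it only shifts the trivialization on fibers by a line bundle that is itself trivial on each fiber). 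Since the affinoid may now be assumed connected (Lemma~\ref{lemma:locally-connected}), it suffices to show that, for a connected strongly noetherian Tate affinoid $S$ and a line bundle $\cal{N}$ on $\bf{P}^d_S$, the set $E_n(\cal{N})$ is either empty or all of $S$.

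Next I would run the following dichotomy on a connected affinoid $S=\Spa(A,A^+)$. If $E_n(\cal{N})$ is empty, we are done. Otherwise pick $x\in E_n(\cal{N})$, so $\cal{N}(-n)\coloneqq \cal{N}\otimes \O_{\bf{P}^d_S/S}(-n)$ restricts trivially to the fiber $\bf{P}^d_x$. Now apply Lemma~\ref{lemma:locally-constant-n} to the line bundle $\cal{N}(-n)$: it tells us that $f^*f_*(\cal{N}(-n)) \to \cal{N}(-n)$ is an isomorphism and, crucially, that the restriction of $\cal{N}(-n)$ to \emph{every} fiber $\bf{P}^d_y$ is trivial. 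Consequently, for every $y\in S$ we have $\cal{N}|_{\bf{P}^d_y} \simeq \O(n)$ — but we must also check that $\cal{N}|_{\bf{P}^d_y}\simeq \O(m)$ forces $m=n$, which is exactly Corollary~\ref{cor:case-of-a-field} applied to $K=K(y)$, $K^+=K(y)^+$: the Picard group of $\bf{P}^d_{\Spa(K(y),K(y)^+)}$ is freely generated by $\O(1)$, so the integer $m$ is determined. Therefore $E_n(\cal{N})=S$, and in particular it is clopen.

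Assembling these, on a general locally noetherian analytic adic space $S$ we cover $S$ by connected affinoid opens $U$ (possible by Lemma~\ref{lemma:locally-connected}), and on each such $U$ the set $E_n(\cal{N})\cap U$ is either $\emptyset$ or $U$ by the affinoid case; a subset of a space which is locally $\emptyset$ or the whole open is clopen. This finishes the argument.

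The main obstacle I anticipate is the bookkeeping in the reduction to a trivial bundle: one has to be sure that changing the trivialization of $\cal{E}$ over a connected affinoid $U$ modifies $\O_{P/S}(1)$ only by the pullback of a line bundle on $U$ (an automorphism of $\O_U^{\oplus d+1}$ lies in $\GL_{d+1}(\O_U(U))$ and acts on $\bf{P}^d_U$ over $U$, twisting $\O(1)$ by $\det$ of the transition, pulled back from $U$), and that such a twist does not change the sets $E_n$ up to the relabeling already absorbed by Lemma~\ref{lemma:locally-constant-n}'s conclusion that fiberwise-trivial-after-twist implies fiberwise-trivial-after-twist everywhere. Everything else is a routine application of the base-change compatibility of $\bf{P}_S(\cal{E})$ and $\O_{P/S}(1)$ (Remark~\ref{rmk:universal-line-bundle}) together with Lemma~\ref{lemma:locally-constant-n} and Corollary~\ref{cor:case-of-a-field}.
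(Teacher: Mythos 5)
Your proof is correct and follows essentially the same route as the paper's (which simply combines Lemma~\ref{lemma:locally-constant-n} with Lemma~\ref{lemma:locally-connected}): reduce to a connected affinoid over which $\cal{E}$ is trivialized and use Lemma~\ref{lemma:locally-constant-n} to see that $E_n(\cal{N})$ is locally either empty or everything. The only quibble is that a change of trivialization of $\cal{E}|_U$ induces an automorphism of $\bf{P}^d_U$ preserving $\O(1)$ on the nose (no determinant twist), but, as you observe, even a twist by a line bundle pulled back from $U$ would be harmless since its restriction to each fiber is trivial.
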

\begin{proof}
    Since the subsets $E_{n}(\cal{N})$ are disjoint, it suffices to show that each of them is open. This follows directly from Lemma~\ref{lemma:locally-constant-n} and Lemma~\ref{lemma:locally-connected}. 
\end{proof}

\begin{lemma}\label{lemma:coherent-proj-formula} Let $S$ be a locally noetherian analytic adic space, let $\cal{E}$ be a vector bundle on $S$ of rank at least $2$, and let  $f\colon \bf{P}_S(\cal{E}) \to S$ be the associated projective bundle. Then, for any line bundle $\cal{L}\in \rm{Pic}(S)$, the natural morphism
\[
\cal{L} \to f_*f^*\cal{L}
\]
is an isomorphism.
\end{lemma}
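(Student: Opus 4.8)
The plan is to reduce the statement to the classical projection formula for $\mathbf{P}^d$ over a ring, using GAGA, and then glue. First I would observe that the morphism $\cal{L} \to f_*f^*\cal{L}$ is a morphism of $\O_S$-modules, so checking it is an isomorphism is local on $S$. Hence I may assume $S=\Spa(A, A^+)$ is a strongly noetherian Tate affinoid and $\cal{E}$ is free, i.e.\ $\mathbf{P}_S(\cal{E}) = \mathbf{P}^d_S$. Further, since $\cal{L}$ is then also free (shrinking $S$ if necessary, which is allowed as the question is local), I may even assume $\cal{L} = \O_S$; but actually it is cleaner to keep $\cal{L}$ general since the projection formula $f_*f^*\cal{L} \simeq \cal{L} \otimes f_*\O_{\mathbf{P}^d_S}$ lets one reduce to showing $f_*\O_{\mathbf{P}^d_S} = \O_S$. (One needs $\cal{L}$ locally free of finite rank for the projection formula; this holds by definition of a line bundle.)

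Next, I would invoke the analytification morphism $i\colon \mathbf{P}^d_S \to \mathbf{P}^{d,\mathrm{alg}}_A$ together with the GAGA theorem \cite[Thm.~II.9.4.1, Thm.~II.9.5.1]{FujKato}, exactly as in the proofs of Lemma~\ref{lemma:GAGA} and Lemma~\ref{lemma:locally-constant-n}. The key point is compatibility of pushforward with analytification: if $g\colon \mathbf{P}^{d,\mathrm{alg}}_A \to \Spec A$ is the algebraic projection, then $f_*\O_{\mathbf{P}^d_S}$ corresponds under the equivalence $\bf{Coh}(\Spec A) \simeq \bf{Coh}(S)$ to $g_*\O_{\mathbf{P}^{d,\mathrm{alg}}_A}$. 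This can be seen either by comparing global sections (both sides compute $\Gamma(\mathbf{P}^{d,\mathrm{alg}}_A, \O) = A$ via GAGA for coherent cohomology, noting $\mathbf{P}^d_S$ and $S$ are affinoid-covered so higher cohomology and pushforwards behave well), or by noting that $f = c \circ$ (something) and using flatness of $c_{X/S}$ from Lemma~\ref{lemma:flatness}. Concretely: $\Gamma(S, f_*f^*\cal{L}) = \Gamma(\mathbf{P}^d_S, f^*\cal{L})$, and via GAGA this equals $\Gamma(\mathbf{P}^{d,\mathrm{alg}}_A, g^*\widetilde{\cal{L}^{\mathrm{alg}}}) = \Gamma(\Spec A, \cal{L}^{\mathrm{alg}}) = \Gamma(S, \cal{L})$ by the classical computation $g_*\O_{\mathbf{P}^{d,\mathrm{alg}}_A} = \O_{\Spec A}$ and the algebraic projection formula. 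Tracking that this identification is induced by the natural adjunction map $\cal{L} \to f_*f^*\cal{L}$ finishes the affinoid case.

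Finally I would globalize: the isomorphism constructed on each affinoid $U \subset S$ is the adjunction unit, hence canonical, hence compatible on overlaps, so these glue to a global isomorphism $\cal{L} \to f_*f^*\cal{L}$ on $S$. (One also uses that $f_*$ for the proper morphism $\mathbf{P}_S(\cal{E})\to S$ commutes with the open restrictions $\pi^{-1}(U) \to U$, which follows from the base-change compatibility built into the analytic Proj construction, Remark~\ref{rmk:proj-base-change}, or simply from the local nature of pushforward.)

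The main obstacle I anticipate is the bookkeeping around compatibility of pushforward with analytification: GAGA as stated gives an equivalence on coherent sheaves and identifies cohomology, but one must be slightly careful to phrase the statement "$f_* \circ i^* \simeq i'^* \circ g_*$" (where $i'\colon S \to \Spec A$ is the analytification map for the base) in a way that directly applies, and to check the resulting isomorphism is the adjunction map rather than merely \emph{some} isomorphism. In practice this is handled by passing to global sections over affinoids, where everything is determined by the map on $A$-modules, so the difficulty is more notational than mathematical. Alternatively, one could bypass GAGA for pushforwards and instead argue: $f_*f^*\cal{L} \simeq \cal{L}\otimes_{\O_S} f_*\O_{\mathbf{P}_S(\cal{E})}$ by the projection formula for coherent sheaves along a proper map (which itself may need a small justification in this setting), and then $f_*\O_{\mathbf{P}_S(\cal{E})} = \O_S$ reduces, fiberwise and by coherence, to $\Gamma(\mathbf{P}^d_{K(x)}, \O) = K(x)$ plus a flatness/dimension argument — but this seems more involved than the GAGA route.
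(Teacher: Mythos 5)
Your proposal matches the paper's proof: the paper likewise localizes on $S$ to trivialize $\cal{E}$ and $\cal{L}$, reduces to showing $\O_S \to f_*\O_{\bf{P}^d_S}$ is an isomorphism, and cites the algebraic computation together with the relative GAGA theorem \cite[Thm.\,II.9.4.1]{FujKato}. The extra bookkeeping you discuss (compatibility of pushforward with analytification, and checking the map is the adjunction unit) is exactly what the paper leaves implicit, so your argument is correct and essentially identical.
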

\begin{proof}
    The proof is clearly local on $S$, so we can assume that $S$ is affine and both $\cal{L}$ and $\cal{E}$ are trivial. In this case, it suffices to show that the natural morphism
    \[
    \O_S \to f_*\O_{\bf{P}^d_S}
    \]
    is an isomorphism. This is standard and follows, for example, from the analogous algebraic results and the (relative) GAGA Theorem (see \cite[Thm.\,II.9.4.1]{FujKato}). 
\end{proof}

Now we are ready to give a proof of Theorem~\ref{thm:relative-picard-projective-line}.

\begin{proof}[Proof of Theorem~\ref{thm:relative-picard-projective-line}]
    {\it Step~$1$. Injectivity of $\alpha\colon \rm{Pic}\left(S\right) \bigoplus \Z \to \rm{Pic}\left(P\right)$.} Suppose that the map is not injetive, so there is a line bundle $\cal{N}=f^*\cal{L} \otimes \O_{P/S}(n)$ that is isomorphic to $\O_{P}$. Then Corollary~\ref{cor:case-of-a-field} implies that $n=0$ by restricting $\cal{N}$ onto the fiber over some rank-$1$ point $x\in S$. Thus 
    \[
    \O_P \simeq \cal{N}\simeq f^*\cal{L}.
    \] 
    In this case, Lemma~\ref{lemma:coherent-proj-formula} implies that
    \[
    \cal{L} \simeq f_*f^*\cal{L} \simeq f_*\O_P \simeq \O_S
    \]
    finishing the proof. \smallskip
    
    {\it Step~$2$. Surjectivity of $\alpha \colon \rm{Pic}\left(S\right) \bigoplus \Z \to \rm{Pic}\left(P\right)$.} Pick any object $\cal{N} \in \rm{Pic}(P)$ and a point $x\in S$. By Corollary~\ref{cor:case-of-a-field}, we know that $\cal{N}_x \simeq \O_{P_x}(n)$ for some integer $n$. Then Corollary~\ref{cor:locally-constant-n-general} implies that, for any point $y\in S$, 
    \[
    \cal{N}_y \simeq \O_{P_y}(n).
    \]
    Therefore, by replacing $\cal{N}$ with $\cal{N} \otimes \O_{P/S}(-n)$, we can assume that the restriction of $\cal{N}$ on any fiber is trivial. In this case, it suffices to show that 
    \[
    f_*\cal{N}
    \]
    is a line bundle on $S$, and that the natural morphism
    \[
    f^*f_*\cal{N} \to \cal{N}
    \]
    is an isomorphism. This question is local on $S$, so the result follows from Lemma~\ref{lemma:locally-constant-n} and Lemma~\ref{lemma:locally-connected}.
\end{proof}

\begin{cor}\label{cor:clopen-connected-components} Let $S$ be a locally noetherian analytic adic space, let $\cal{E}$ be a vector bundle on $S$ of rank at least $2$, let  $f\colon \bf{P}_S(\cal{E}) \to S$ be the associated projective bundle, and let $\cal{N}$ be a line bundle on $\bf{P}_S(\cal{E})$. Then there is a disjoint decomposition of $S$ into clopen subsets $S=\sqcup_{i\in I} S_i$ with the induced morphisms
\[
f_i\colon \bf{P}_{S_i}(\cal{E}|_{S_i}) \to S_i
\]
such that 
\[
\cal{N}|_{\bf{P}_{S_i}(\cal{E}|_{S_i})} \simeq f_i^* \cal{L}_i \otimes \O(n_i)
\]
for some $\cal{L}_i\in \rm{Pic}(S_i)$ and integers $n_i$.
\end{cor}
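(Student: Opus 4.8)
The plan is to reduce this to Theorem~\ref{thm:relative-picard-projective-line} by decomposing $S$ according to the locally constant data provided by Corollary~\ref{cor:locally-constant-n-general}. First I would recall that, by Lemma~\ref{lemma:locally-connected}, the space $S$ is locally connected, so its connected components are clopen and $S = \sqcup_{i\in I} S_i$ where each $S_i$ is a connected locally noetherian analytic adic space. On each $S_i$, the restricted projective bundle $f_i\colon \bf{P}_{S_i}(\cal{E}|_{S_i}) \to S_i$ is again a projective bundle over a connected base, so Theorem~\ref{thm:relative-picard-projective-line} applies directly: every line bundle on $\bf{P}_{S_i}(\cal{E}|_{S_i})$ is of the form $f_i^*\cal{L}_i \otimes \O(n_i)$ for a unique $\cal{L}_i\in \rm{Pic}(S_i)$ and a unique integer $n_i$. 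Applying this to $\cal{N}|_{\bf{P}_{S_i}(\cal{E}|_{S_i})}$ gives exactly the desired conclusion.

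The only subtlety is making sure the index set $I$ and the decomposition are legitimate: a priori $\pi_0(S)$ could be infinite (it is finite only when $S$ is quasi-compact, by Corollary~\ref{cor:finite-number-of-components}), but nothing in the statement requires finiteness of $I$, so an arbitrary disjoint union of clopen connected components is fine. One should also note that $\cal{E}|_{S_i}$ is still a vector bundle and that the formation of projective bundles commutes with the open (indeed clopen) immersion $S_i \hookrightarrow S$; this is immediate from Remark~\ref{rmk:proj-base-change} applied to the base change along $S_i \to S$, which also identifies $\O_{\bf{P}_{S_i}(\cal{E}|_{S_i})/S_i}(1)$ with the restriction of $\O_{\bf{P}_S(\cal{E})/S}(1)$ (as recorded in Remark~\ref{rmk:universal-line-bundle}).

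Alternatively, one can first apply Corollary~\ref{cor:locally-constant-n-general} to partition $S$ by the clopen sets $E_n(\cal{N})$, reducing to the case where $\cal{N}$ restricts trivially on every fiber, and then decompose further into connected components; this more closely mirrors the proof of Theorem~\ref{thm:relative-picard-projective-line} itself and makes the integers $n_i$ transparently the unique integers with $S_i \subset E_{n_i}(\cal{N})$. Either way, there is essentially no obstacle here: the corollary is a purely formal ``spreading out'' of the connected case, and the one thing to be careful about is simply not to assume $I$ is finite. The write-up is therefore short: invoke local connectedness, split into connected components, and quote the theorem on each piece.
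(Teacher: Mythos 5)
Your proof is correct and matches the paper's intended argument: decompose $S$ into its connected components, which are clopen by Lemma~\ref{lemma:locally-connected} and Corollary~\ref{cor:finite-number-of-components}, and apply Theorem~\ref{thm:relative-picard-projective-line} on each piece. (The paper's own one-line proof contains a self-referential citation that is evidently a typo for exactly this decomposition into clopen connected components, so your write-up is in fact the cleaner version of the same argument.)
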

\begin{proof}
    This follows directly from Theorem~\ref{thm:relative-picard-projective-line} and Corollary~\ref{cor:finite-number-of-components}. 
\end{proof}

\section{\'Etale $6$-functor formalism}\label{section:6-functors-construction}

In this section, we construct an \'etale $6$-functor formalism on the category of locally noetherian analytic adic spaces. We refer to \cite[Appendix A.5]{Lucas-thesis} for the extensive discussion of $6$-functor formalisms and to \cite[Def.\,2.3.10 and Rem.\,2.3.11]{duality-revisited} for the precise definition of a $6$-functor formalism that we are going to use in these notes. Here, we only say that a data of a $6$-functor formalism is a formal way of encoding the $6$-functors
\[
\left(f^*, \, \rm{R}f_*, \, \otimes^L,\, \rm{R}\ud{\Hom},\, \rm{R}f_!,\, \rm{R}f^!\right)
\]
with all (including ``higher'') coherences between these functors. In particular, this encodes the projection formula and proper base-change. \smallskip

This formalism was essentially constructed by R.\,Huber in \cite{H3}. However, at some places he had to work with bounded derived categories and a restricted class of morphisms. We eliminate all these extra assumptions in this section, and also make everything $\infty$-categorical. \smallskip

In the rest of the section, we fix an integer $n$. For each locally noetherian analytic adic space $X$, we denote by $\cal{D}(X_\et; \Z/n\Z)$ the $\infty$-derived category of \'etale sheaves of $\Z/n\Z$-modules on $X$. This is a stable, presentable $\infty$-category with the standard $t$-structure (see \cite[Prop.\,1.3.5.9 and Prop.\,1.3.5.21]{HA}). \smallskip

We wish to define $6$-functors on $\cal{D}(X_\et; \Z/n\Z)$. We note that $4$-functors come for free. The category $\cal{D}(X_\et; \Z/n\Z)$ admits the natural symmetric monoidal structure by deriving the usual tensor product on $\rm{Shv}(X_\et; \Z/n\Z)$ (see \cite[Lemma 2.2.2 and Notation 2.2.3]{Liu-Zheng} for details). We denote this functor by
\[
- \otimes^L - \colon \cal{D}(X_\et; \Z/n\Z)\times \cal{D}(X_\et; \Z/n\Z) \to \cal{D}(X_\et; \Z/n\Z).
\]
By deriving the inner-Hom functor, we also get the functor
\[
\rm{R}\ud{\Hom}_X(-, -) \colon \cal{D}(X_\et; \Z/n\Z)^{\rm{op}} \times \cal{D}(X_\et; \Z/n\Z) \to \cal{D}(X_\et; \Z/n\Z)
\]
that is right adjoint to the tensor product functor. Similarly, for any morphism $f\colon X \to Y$ of locally noetherian analytic adic spaces, we get a pair of adjoint functors
\[
f^* \colon \cal{D}(Y_\et; \Z/n\Z) \to \cal{D}(X_\et; \Z/n\Z),
\]
\[
\rm{R}f_*\colon \cal{D}(X_\et;\Z/n\Z) \to \cal{D}(Y_\et; \Z/n\Z).
\]

Thus, the question of constructing $6$-functor essentially reduces to the question of constructing $\rm{R}f_!$ and $f^!$-functors and showing certain compatibilities.

\begin{lemma1}\label{lemma:properties-proper-morphisms} Let $f\colon X \to Y$ be a $+$-weakly finite type morphism of locally noetherian analytic adic spaces, and let $n>0$ be an integer. Then 
\begin{enumerate}
    \item\label{lemma:properties-proper-morphisms-1} if $Y$ is quasi-compact, $f_* \colon \rm{Shv}(X_\et; \Z/n\Z) \to \rm{Shv}(Y_\et; \Z/n\Z)$ is of finite cohomological dimension;
    \item\label{lemma:properties-proper-morphisms-2} $\rm{R}f_*$ commutes with all (homotopy) colimits. So it admits a right adjoint functor $f^\times \colon \cal{D}(Y_\et; \Z/n\Z) \to \cal{D}(X_\et; \Z/n\Z)$;\footnote{When $f$ is proper, the functor $f^\times$ coincides with the latter defined $\rR f^!$. However, these two functors differ in general.} 
    \item\label{lemma:properties-proper-morphisms-4} if $n$ is invertible in $\O_Y^+$, then, for any Cartesian diagram
    \[
    \begin{tikzcd}
    X' \arrow{r}{g'} \arrow{d}{f'} & X \arrow{d}{f} \\
    Y' \arrow{r}{g} & Y,
    \end{tikzcd}
    \]
    the natural morphism
    \[
    g^*\rm{R}f_* \to \rm{R}f'_* g'^*
    \]
    is an isomorphism of functors $\cal{D}(X_\et; \Z/n\Z) \to \cal{D}(Y'_\et; \Z/n\Z)$.
\end{enumerate}
\end{lemma1}
\begin{proof}    
    {\it Step~$1$. We show (\ref{lemma:properties-proper-morphisms-1}).} Since $f$ and $Y$ are quasi-compact, the claim is analytic local on both $X$ and $Y$. Therefore, we can assume that $X=\Spa(B, B^+)$ and $Y=\Spa(A, A^+)$ are both strongly noetherian Tate affinoids. In this case, we show that the $n$-cohomological dimension of $f_*$ is less than or equal to $2N$. For this, we consider the universal compactification
    \[
    \begin{tikzcd}
        X \arrow{d}{f} \arrow{r}{j} & \ov{X}^{/Y} \arrow{ld}{\ov{f}^{/Y}}\\
        Y, &  
    \end{tikzcd}
    \]
    where $j$ is a quasi-compact open immersion and $\ov{f}^{/Y}$ is proper (see \cite[Thm.~5.1.5 and Cor.~5.1.6]{H3}). Furthermore, \cite[Cor.~5.1.14]{H3} ensures that $\rm{dim.tr}(\ov{f}^{/Y})=\rm{dim.tr}(f)=N$. \smallskip

    Now \cite[Prop.~2.6.4]{H3} implies that $\rm{R}j_*=j_*$, so $\rm{R}f_* = \rm{R}\ov{f}^{/Y}_* \circ j_*$. Therefore, it suffices to show the $n$-cohomological dimension of $\ov{f}^{/Y}_*$ is less or equal to $2N$. This now follows directly from \cite[Prop.~5.3.11]{H3}. \smallskip
    
    {\it Step~$2$. We show (\ref{lemma:properties-proper-morphisms-2})}. Since $\rm{R}f_*$ is a right adjoint, it commutes with all finite limits. Therefore, it commutes with all finite colimits by \cite[Prop.\,1.1.4.1]{HA}, so it suffices to show that $\rm{R}f_*$ commutes with infinite direct sums. Therefore, it suffices to show that, for any collection of objects $\F_i\in \cal{D}(X_\et; \Z/n\Z)$, the natural morphism
    \[
    \psi\colon \bigoplus_{i\in I} \rm{R}f_* \left(\F_i \right) \to \rm{R}f_* \big( \bigoplus_{i\in I} \F_i \big)
    \]
    is an isomorphism. If all $\F_i \in \cal{D}^{\geq -c}(X; \Z/n\Z)$ for some fixed integer $c$, then this follows from \cite[Lemma 2.3.13(ii)]{H3}. In general, the claim is local on $Y$, so we can assume that $Y$ is quasi-compact. Then $\rm{R}f_*$ is of finite cohomological dimension by Step~$1$. Therefore, the unbounded version follows from the bounded one by a standard argument with truncations. We spell out this argument for the reader's convenience. 

    First, we note that it suffices to show that $\cH^i(\psi)$ is an isomorphism for all $i$. Since we are allowed to replace each $\F_i$ with $\F_i[i]$, we conclude that it suffices to show that $\cH^0(\psi)$ is an isomorphism. Now say $f_*$ has cohomological dimension $n$. Then \cite[\href{https://stacks.math.columbia.edu/tag/07K7}{Tag 07K7}]{stacks-project} implies that the morphisms
    \[
    \bigoplus_{i\in I} \rR f_* \F_i \to \bigoplus_{i\in I} \rR f_* \tau^{\geq -n} \F_i \text{ and }
    \]
    \[
    \rR f_* \big( \bigoplus_{i\in I} \F_i\big) \to \rR f_*\big( \bigoplus_{i\in I} \tau^{\geq -n}\F_i\big)
    \]
    are isomorphisms on $\cH^0$. Therefore, we conclude that, for the purpose of showing that $\cH^0(\psi)$ is an isomorphism, we can replace each $\F_i$ with $\tau^{\geq -n} \F_i$. But this case was already proven above. This finishes the proof that $\rR f_*$ commutes with all (homotopy) colimits. 
    
    The existence of a right adjoint follows directly from the fact that $\rm{R}f_*$ commutes with colimits and \cite[Cor.\,5.5.2.9]{HTT}. \smallskip
    
    {\it Step~$3$. We show (\ref{lemma:properties-proper-morphisms-4})}. The question is clearly analytically local on $Y$ and $Y'$, so we can assume that both spaces are quasi-compact. Therefore, Step~$2$ ensures that both $\rm{R}f_*$ and $\rm{R}f'_*$ have finite cohomological dimension. Therefore, a standard argument with truncations allows us to reduce to the case of bounded above complexes. In this case, we wish to show that the natural morphism 
    \[
    \psi_\F\colon g^*\rm{R}f_*\F \to \rm{R}f'_* \left( g'^*\F\right)
    \]
    is an isomorphism for any $\F\in \cal{D}^+(X_\et; \Z/n\Z)$. An easy argument with spectral sequences reduces the question to the case of a sheaf $\F\in \rm{Shv}(X_\et; \Z/n\Z)$. It suffices to show that $\psi_\F$ is an isomorphism on stalks at geometric points of $Y$. Then \cite[Lem.\,2.5.12 and Prop.\,2.6.1]{H3} reduce the question to the case of a surjective morphism 
    \[
    Y'=\Spa(C', C'^+) \to Y=\Spa(C, C^+)
    \]
    for some algebraically closed non-archimedean fields $C$ and $C'$ and open, bounded valuation subrings $C^+\subset C$ and $C'^+ \subset C'$. In this case, the result follows from \cite[Cor.\,4.3.2]{H3}. 
\end{proof}

Now we discuss the fifth functor $f_!$. The idea is to define it separately for an \'etale morphism and a proper morphism, and then show that these two functors ``glue\text{''} together.

\begin{lemma1}\label{lemma:base-change-etale} Let $j\colon U\to X$ be an \'etale morphism of locally noetherian analytic adic spaces, and let $n>0$ be an integer. Then the functor $j^*\colon \cal{D}(X_\et; \Z/n\Z) \to \cal{D}(U_\et; \Z/n\Z)$ admits a left adjoint 
\[
j_!\colon \cal{D}(U_\et; \Z/n\Z) \to \cal{D}(X_\et; \Z/n\Z)
\]
such that 
\begin{enumerate}
    \item for any Cartesian diagram
    \[
    \begin{tikzcd}
        U' \arrow{d}{j'} \arrow{r}{g'} & U \arrow{d}{j} \\
        X' \arrow{r}{g} & X
    \end{tikzcd}
    \]
    of locally noetherian analytic adic spaces, the natural morphism
    \[
    j'_! \circ (g')^* \to g^* \circ j_!
    \]
    is an isomorphism of functors $\cal{D}(U) \to \cal{D}(X')$;
    \item the natural morphism
    \[
    j_! (-\otimes^L j^*(-)) \to j_!(-) \otimes^L -
    \]
    is an isomorphism of functors $\cal{D}(U_\et; \Z/n\Z)\times \cal{D}(X_\et; \Z/n\Z) \to \cal{D}(X_\et; \Z/n\Z)$.
\end{enumerate}
\end{lemma1}
\begin{proof}
    {\it We first show existence of $j_!$.} For this, we note the \'etale topos $U_\et$ is the slice topos $(X_{\et})_{/h_U}$. Therefore, the pullback functor $j^*$ commutes with both limits and colimits. Since both $\cal{D}(U; \Z/n\Z)$ and $\cal{D}(X; \Z/n\Z)$ are presentable, the adjoint functor Theorem (see \cite[Cor.\,5.5.2.9]{HTT}) implies that $j^*$ admits a left adjoint $j_!$. \smallskip 
    
    
    {\it Base-change.} By adjunction, it suffices to show that the natural morphism 
    \[
    j^* \rm{R}g_* \to \rm{R}g'_* j'^*
    \]
    is an isomorphism of functors. This is essentially obvious because $U_\et$ is the slice topos of $X_\et$. \smallskip
    
    {\it Projection Formula.} This follows from Yoneda's Lemma and the following sequence of isomorphisms
    \begin{align*}
        \rm{Hom}_X\left(j_!\left(A\otimes^L j^*B\right), C\right) & \simeq \rm{Hom}_U\left(A\otimes^L j^*B, j^*C\right) \\
            & \simeq \rm{Hom}_U\left(A, \rm{R}\ud{\rm{Hom}}_U\left(j^*B, j^*C\right)\right) \\
            & \simeq \rm{Hom}_U\left(A, j^*\rm{R}\ud{\rm{Hom}}_X\left(B, C\right)\right) \\
            & \simeq \rm{Hom}_X\left(j_!A, \rm{R}\ud{\rm{Hom}}_X\left(B, C\right)\right) \\
            & \simeq \rm{Hom}_X\left(j_!A \otimes^L B, C\right). \qedhere
    \end{align*}
\end{proof}

Now we discuss the hardest part of the construction: we show that $j_!$ and $\rm{R}f_*$ are compatible in some precise sense:

\begin{prop1}\label{prop:projection-formula-proper} Let $Y$ be a locally noetherian analytic adic space, 
\[
\begin{tikzcd}
    X' \arrow{r}{j'} \arrow{d}{f'}& X \arrow{d}{f} \\
    Y' \arrow{r}{j} & Y
\end{tikzcd}
\]
a Cartesian diagram such that $f$ is proper and $j$ is \'etale. Then  
\begin{enumerate}
    \item there is a natural isomorphism of functors
    \[
    j_!\circ  \rm{R}f'_* \to \rm{R}f_* \circ j'_! \colon \cal{D}(X'_\et; \Z/n\Z) \to \cal{D}(Y_\et; \Z/n\Z).
    \]
    \item(Projection Formula) The natural morphism of functors 
    \[
    \rm{R}f_*(-)\otimes^L (-)\to \rm{R}f_*\big((-) \otimes^L f^*(-)\big) \colon \cal{D}(X) \times \cal{D}(Y) \to \cal{D}(Y)
    \]
    is an isomorphism.
\end{enumerate}
\end{prop1}
\begin{proof}
    {\it Part~(1).} Firstly, we define the morphism 
    \[
    \alpha \colon j_! \circ \rm{R}f'_* \to \rm{R}f_* \circ j'_!
    \]
    to be adjoint to the natural morphism
    \[
    f^* \circ j_! \circ \rm{R}f'_* \simeq j'_! \circ f'^* \circ \rm{R}f'_* \xr{j'_!(\rm{adj})} j'_!,
    \]
    where the first map comes from the base-change established in Lemma~\ref{lemma:base-change-etale}. The question whether $\alpha$ is an isomorphism is \'etale local on $Y$ and $Y'$, so we may assume that both spaces are affinoids. Then \cite[Lemma 2.2.8]{H3} ensures that, after possibly passing to an open covering of $Y$, there is a decomposition of $j$ into a composition $j=g\circ i$ such that $i$ is an open immersion and $g$ is a finite \'etale morphism. \smallskip
    
    It suffices to treat these two cases separately. Suppose first that $j$ is finite \'etale. We can check that $\alpha$ is an isomorphism \'etale locally on $Y$, so we can reduce to the case when $Y'$ is a disjoint union of copies of $Y$. Then the result is evident. \smallskip
    
    Now we deal with the case when $j$ is an open immersion. Since $\rm{R}f_*$ and $j_!$ both have finite cohomological dimesion, a standard argument reduces the question to showing that the natural morphism
    \[
    \alpha^i_\F \colon j_! \circ \rm{R}^if'_*\F \to \rm{R}^if_* \circ j'_! \F
    \]
    is an isomorphism for any $\F\in \rm{Shv}(X'_\et; \Z/n\Z)$ and $i\geq 0$. It suffices to verify this claim on stalks. Since both $\rm{R}f_*$ and $j_!$ commute with taking stalks, we can use \cite[Prop.~2.6.1]{H3} and Lemma~\ref{lemma:base-change-etale} to reduce the question to showing that the natural morphism
    \[
    \rm{R}\Gamma(X, j'_!\F) \to \rm{R}\Gamma(Y, j_!\rm{R}'_*\F)
    \]
    is an isomorphism, where $Y=\Spa(C, C^+)$ for an algebraically closed non-archimedean field $C$ and an open and bounded valuation subring $C^+\subset C$. If $Y' =Y$, then the claim is evident. Otherwise, we see that 
    \[
    \rm{R}\Gamma(Y, j_!\rm{R}'_*\F) = (j_!\rm{R}'_*\F)_{\ov{s}} = 0 
    \]
    since the stalk of $j_!\rm{R}'_*\F$ at the unique closed point $s\in \Spa(C, C^+)$ is zero. Therefore, it suffices to show that 
    \[
    \rm{R}\Gamma(X, j'_!\F)=0
    \]
    in this case. This is follows from \cite[Prop.\,4.4.3]{H3} since the restriction of $j'_!\F$ onto the fiber\footnote{This fiber is merely a pseudo-adic space, and not an adic space.} of $f$ over the closed point of $Y$ is equal to $0$. \smallskip
    
     {\it Part~(2).}  We wish to prove that the natural morphism
     \[
     \rm{R}f_*(\F\otimes^L f^*\G) \to \rm{R}f_*(\F) \otimes^L \G
     \]
     is an isomorphism for any $\F\in \cal{D}(X; \Z/n\Z)$ and $\G\in \cal{D}(Y;\Z/n\Z)$. Now we choose any complex $\G^\bullet$ representing $\G$. Then we note that the natural morphism
     \[
     \hocolim_N \sigma^{\geq -N} \G^\bullet \to \G
     \]
     is an isomorphism. Since all functors commute with (homotopy) colimits, it suffices to prove the result for $\sigma^{\geq -N} \G^\bullet$, i.e., we can assume that $\G\in \cal{D}^+(Y;\Z/n\Z)$. Then we may similarly use that 
     \[
     \hocolim_N \tau^{\leq N} \G\xr{\sim} \G
     \]
     to reduce to the case of a bounded complex $\G\in \cal{D}^b(Y;\Z/n\Z)$. This, in turn, can be reduced to the case when $\G\in \rm{Shv}(Y_\et; \Z/n\Z)$ by an easy induction on the number of non-zero cohomology sheaves. Then \cite[\href{https://stacks.math.columbia.edu/tag/0GLW}{Tag 0GLW}]{stacks-project} implies that $\G$ is a colimit of sheaves of the form $j_! \ud{\Z/n\Z}$ for some \'etale morphism $j\colon U \to X$. Again, since all functors in the question commute with all (homotopy) colimits, it suffices to prove the claim for $\G=j_! \ud{\Z/n\Z}$. In this case, this follows from the following sequence of isomorphisms
     \begin{align*}
     (\rm{R}f_* \F)\otimes^L j_!(\ud{\Z/n\Z}) & \simeq j_! j^* \rm{R}f_* \F \\
     & \simeq j_! \rm{R}f'_* (j')^*\F \\
     & \simeq \rm{R}f_*\circ j'_!\circ (j')^*\F \\
     & \simeq \rm{R}f_*(\F \otimes^L j'_! \ud{\Z/n\Z}) \\
     & \rm{R}f_*(\F \otimes f^* j_! \ud{\Z/n\Z}). \qedhere
     \end{align*}
\end{proof}

Before reading the proof of the next theorem, we strongly advise the reader to look at \cite[Appendix A.5]{Lucas-thesis} and \cite[\textsection 2.1, 2.3]{duality-revisited}. 

\begin{thm1}\label{thm:etale-six-functors} Let $S$ be a locally noetherian analytic adic space, let $\cal{C}'$ be the category of locally $+$-weakly finite type $S$-adic spaces, and let $n>0$ be an integer {\it invertible} in $\O_S^+$. Then there is a $6$-functor formalism (in the sense\footnote{Note that this definition slightly differs from \cite[Def.\,A.5.7]{Lucas-thesis}} of \cite[Def.\, 2.3.10 and Rmk.\,2.3.11]{duality-revisited})
\[
\cal{D}_{\et}(-; \Z/n\Z) \colon \rm{Corr}(\cal{C'}) \to \Cat_\infty
\]
such that 
\begin{enumerate}
    \item there is a canonical isomorphism of symmetric monoidal $\infty$-categories $\cal{D}_\et(X;\Z/n\Z) = \cal{D}(X_\et; \Z/n\Z)$ for any $X\in \cal{C'}$;
    \item for a  morphism $f\colon X \to Y$ in $\cal{C}'$, we have 
    \[
    \cal{D}_\et\big([Y \xleftarrow{f} X \xr{\rm{id}} X]\big) = f^*\colon \cal{D}(Y_\et; \Z/n\Z) \to \cal{D}(X_\et; \Z/n\Z);
    \]
    \item for a separated, taut, locally $+$-weakly finite type morphism $f\colon X \to Y$, we have
    \[
    \cal{D}_{\et}\big([X \xleftarrow{\rm{id}} X \xr{f} Y]\big)|_{\cal{D}_\et^+(X_\et; \Z/n\Z)} \simeq \rm{R}^+f_! \colon \cal{D}^+_\et(X_\et; \Z/n\Z) \to \cal{D}_\et(Y_\et; \Z/n\Z),
    \]
    where $\rm{R}^+f_!$ is the functor from \cite[Thm.~5.4.3]{H3}.
\end{enumerate}
\end{thm1}
\begin{proof}
    We use \cite[Lemma 2.2.2 and Notation 2.2.3]{Liu-Zheng} to get a functor 
    \[
    \cal{D}^*(-;\Z/n\Z) \colon \cal{C'}^{\rm{op}} \to \Cat_\infty^{\otimes}
    \]
    that sends a locally $+$-weakly finite type adic $S$-space $X$ to $\cal{D}(X_\et; \Z/n\Z)$. We extend it to the desired functor \[
    \cal{D}_\et(-; \Z/n\Z)\colon \rm{Corr}(\cal{C}')_{\rm{all}, \rm{all}} \to \Cat_\infty
    \]
    in four steps: \smallskip
    
    {\it Step~$1$. We define $\cal{D}_\et$ on ``compatifiable'' morphisms.} More precisely, we define $E\subset \rm{Hom}(\cal{C'})$ to be the class of $+$-weakly finite type, separated, taut morphisms (in the sense of \cite[Def.\,5.1.2]{H3}). We also define the subclasses 
    \[
    I, P \subset E
    \]
    to be quasi-compact open immersions and proper morphisms, respectively. Now \cite[Cor.\,5.1.6]{H3} implies that any morphism $f\in E$ admits a decomposition $f=p\circ i$ such that $i\in I$ and $p\in P$. One easily checks that $I, P \subset E$ defines a {\it suitable decomposition} of $E$ in the sense of \cite[Def.\,A.5.9]{Lucas-thesis}. Now Lemma~\ref{lemma:properties-proper-morphisms}, Lemma~\ref{lemma:base-change-etale}, and Proposition~\ref{prop:projection-formula-proper} ensure that all the conditions of \cite[Prop.\,A.5.10]{Lucas-thesis} are satisfied, and so it defines a weak $6$-functor formalism (see \cite[Def.\,2.1.2]{duality-revisited}) 
    \[
    \cal{D}_\et(-; \Z/n\Z) \colon \rm{Corr}(\cal{C}')_{E, \rm{all}} \to \Cat_\infty.
    \]
    Now recall that a $6$-functor formalism $\cal{D}_\et$ defines a lower-shriek functor $f_!$ for any morphism $f\in E$ (see \cite[Def.\,A.5.6]{Lucas-thesis}). In this case, the construction tells us that the lower shriek functor $f_!$ is equal to  $\rm{R}g_*\circ j_!$, where $f=g\circ j$ is the decomposition of $f$ into a composition of an open immersion $j$ and a proper morphism $g$. In particular, for a proper morphism $f$, we get an equality $f_!=\rm{R}f_*$. Thus, any proper morphism is cohomologically proper in the sense of \cite[Def.\,2.3.4]{duality-revisited}. \smallskip
    
    {\it Step~$2$. We extend $\cal{D}_\et$ to separated, locally $+$-weakly finite type morphisms.} We define $E_1$ to be the class of morphisms of the form $\sqcup_{i\in I} X_i \to Y$ such that each $X_i \to Y$ lies in $E$. Then \cite[Prop.\,A.5.12]{Lucas-thesis} ensures that $\cal{D}_\et(-; \Z/n\Z)$ uniquely extends to a weak $6$-functor formalism
    \[
    \cal{D}_\et(-; \Z/n\Z) \colon \rm{Corr}(\cal{C}')_{E_1, \rm{all}} \to \Cat_\infty.
    \]
    Now we define a new class of morphisms $E'_1$ to be the class of locally $+$-weakly finite type, separated morphism. We also define a subclass $S_1\subset E_1$ to consist of morphisms $\sqcup_{i\in I} U_i \to X$ for covers $X = \cup_{i\in I} U_i$ by quasi-compact open immersions. Then \cite[Prop.\,A.5.14]{Lucas-thesis} implies that $\cal{D}_\et(-; \Z/n\Z)$ uniquely extends to a weak $6$-functor formalism
    \[
    \cal{D}_\et(-; \Z/n\Z) \colon \rm{Corr}(\cal{C}')_{E'_1, \rm{all}} \to \Cat_\infty.
    \]
    
    {\it Step~$3$. We extend $\cal{D}_\et$ to all locally $+$-weakly finite type morphisms.} This reduction is similar to Step~$2$. We define $E''$ to be the collection of all locally $+$-weakly finite type morphisms, and $S\subset E'$ to be the collection of morphisms $\sqcup_{i\in I} U_i \to X$ for covers $X = \cup_{i\in I} U_i$ by open immersions. Then \cite[Prop.\,A.5.12]{Lucas-thesis} and \cite[Prop.\,A.5.14]{Lucas-thesis} imply that $\cal{D}_\et(-; \Z/n\Z)$ uniquely extends to a weak $6$-functor formalism
    \[
    \cal{D}_\et(-; \Z/n\Z) \colon \rm{Corr}(\cal{C}')_{\rm{all}, \rm{all}} \to \Cat_\infty.
    \]
    
    {\it Step~$4$. We show that $\cal{D}_\et$ is a $6$-functor formalism in the sense of \cite[Def.~2.3.10 and Rmk.\,2.3.11]{duality-revisited}.} We already have a weak $6$-functor formalism
    \[
        \cal{D}_\et(-; \Z/n\Z) \colon \rm{Corr}(\cal{C}')_{\rm{all}, \rm{all}} \to \Cat_\infty.
    \]
    By construction, the categories $\cal{D}_\et(X; \Z/n\Z) \simeq \cal{D}(X_\et;\Z/n\Z)$, so they are stable and presentable. Clearly, $\cal{D}_\et$ satisfies analytic descent; it even satisfies \'etale descent. By Step~$1$, we know that any proper morphism $f\colon X \to Y$ is cohomologically proper (in the sense of \cite[Def.\,2.3.4]{duality-revisited}). Therefore, we are only left to check that any \'etale morphism $j\colon X \to Y$ is cohomologically \'etale in the sense of \cite[Def.\,2.3.4]{duality-revisited}. \smallskip
    
    For this, we set up $E=\et$ to be the class of all \'etale morphisms, and restrict $\cal{D}_\et$ onto $\Corr(\cal{C}')_{\et, \rm{all}}$ to get a weak $6$-functor formalism
    \[
    \cal{D}'_\et \colon \Corr(\cal{C}')_{\et, \rm{all}} \to \Cat_\infty.
    \]
    Alternatively, we can can apply \cite[Prop.\,A.5.10]{Lucas-thesis} to $E=I$ being the class of all \'etale morphisms and the class $P$ consisting only of the identity morphisms to get another weak $6$-functor formalism
    \[
    \cal{D}''_\et \colon \Corr(\cal{C}')_{\et, \rm{all}} \to \Cat_\infty.
    \]
    By construction, any \'etale morphism is cohomologically \'etale with respect to $\cal{D}''_\et$. Thus, the question boils down to showing that $\cal{D}'_\et$ and $\cal{D}''_\et$ coincide. Using the uniqueness statements from  \cite[Prop.\,A.5.12, A.5.14, A.5.16]{Lucas-thesis}, we can repeat the same arguments as in Steps~$2$ and $3$ to reduce the question to showing that the restrictions
    \[
    \cal{D}'_\et|_{\text{\'etqcsep}} \colon \Corr_{\text{\'etqcsep}, \rm{all}} \to \Cat_\infty,
    \]
    \[
    \cal{D}''_\et|_{\text{\'etqcsep}} \colon \Corr_{\text{\'etqcsep}, \rm{all}} \to \Cat_\infty
    \]
    coincide, where $\text{\'etqcsep}$ stands for the class of \'etale quasi-compact, separated morphisms. Now we note that \'etale quasi-compact, separated morphisms are taut by \cite[Lemma 5.1.3(iv)]{H3}. Therefore, after unravelling the definitions, we see that both $\cal{D}'_\et|_{\text{\'etqcsep}}$ and $\cal{D}''_\et|_{\text{\'etqcsep}}$ are obtained by applying \cite[Prop.\,A.5.8]{Lucas-thesis} to $I={\text{\'etqcsep}}$ and $P={\rm{id}}$. Therefore, they coincide. 

    {\it Step~$5$. Compare to Huber's theory.} First, we have $\cal{D}_\et(X;\Z/n\Z) = \cal{D}(X_\et; \Z/n\Z)$ and $\cal{D}_\et\big([Y \xleftarrow{f} X \xr{\rm{id}} X]\big) = f^*$ by the very construction of $\cal{D}_\et$. Now we wish to show that
    \[
        \cal{D}_{\et}\big([X \xleftarrow{\rm{id}} X \xr{f} Y]\big)|_{\cal{D}_\et^+(X_\et; \Z/n\Z)} \simeq \rm{R}^+f_! \colon \cal{D}_\et(X_\et; \Z/n\Z) \to \cal{D}_\et(Y_\et; \Z/n\Z),
    \]
    for a separated, locally $+$-weakly finite type, taut morphism $f\colon X \to Y$. For brevity, we denote $\cal{D}_\et([X \xleftarrow{\rm{id}} X \xr{j} Y])$ by $\rm{R}_{\et}f_!$ and its restriction on $\cal{D}_\et^+(X_\et; \Z/n\Z)$ by $\rm{R}^+_{\et} f_!$. \smallskip

    Now we note that the construction of $\cal{D}_\et$ implies that $\rm{R}_{\et}j_! = j_!$ if $f=j\colon X \to Y$ is an \'etale morphism. Likewise, $\rm{R}_\et f_! = \rm{R}f_*$ if $f$ is proper. Now we use \cite[Thm.~5.1.5 and Cor.~5.1.6]{H3} and the fact that both $\rm{R}_\et f_!$ and $\rm{R}^+f_!$ are compatible with compositions to conclude that $\rm{R}_\et f_! = \rm{R}^+f_!$ for a separated, {\it $+$-weakly finite type}, taut morphism $f$. \smallskip
    
    Now using the case of \'etale morphisms and \cite[Thm.~5.1.5]{H3} again, we see that it suffices to show that $\rm{R}_\et^+f_! = \rm{R}^+f_!$ for partially proper morphism $f$. \smallskip
    
    In what follows, we assume that $f$ is partially proper. Then we note that \cite[Def.~5.3.1]{H3} says that $\rm{R}^+f_!$ is the right derived functor of the functor $f_!$ from \cite[Def.~5.2.1]{H3}. Now the dual versions of \cite[Thm.~1.3.3.2 and Ex.~1.3.3.4]{HA} imply that, in order to construct an equivalence of functors
    \[
    \rm{R}^+f_! \xrightarrow{\sim} \rm{R}^+_{\et} f_! \colon \cal{D}^+(X_\et; \Z/n\Z) \to \cal{D}(Y_\et; \Z/n\Z),
    \]
    it suffices to construct the following equivalence of functors
    \[
    f_! \xrightarrow{\sim} \cal{H}^0\big(\rm{R}^+_{\et} f_!\big) \colon \rm{Shv}(X_\et;\Z/n\Z) \to \rm{Shv}(Y_\et; \Z/n\Z).
    \]
    This can be done locally on $Y$, so we may and do assume that $Y$ is affinoid. In this case, we note that the proof of \cite[Prop.~5.2.2]{H3} ensures $f_! = \colim_{U\in \cal{F}} (f|_U)_! (-|_U)$, where $\cal{F}$ is the filtered system of quasi-compact open subsets $U\subset X$. \smallskip

    After unravelling the construction of $\rm{R}_\et f_!$, we get that 
    \[
    \rm{R}_\et f_! \simeq \colim_{U\in \cal{F}} (\rm{R}_\et f|_U)_! (-|_U).
    \] 
    Since filtered colimits are exact, we combine the above formulas with the established above case of separated, weakly $+$-finite type, taut morphisms to conclude that 
    \[
    \cal{H}^0(\rm{R}_{\et} f_!) \simeq \colim_{U\in \F} \cal{H}^0(\rm{R}_{\et} f|_U)_! (-|_U) \simeq \colim_{U\in \F} (f|_U)_! (-|_U) \simeq f_!.
    \]
    This finishes the proof. 
\end{proof}
    
\begin{rmk1}\label{rmk:restrict-6-functors} Let $S$ be a locally noetherian analytic adic space, $\cal{C}$ the category of locally finite type adic $S$-spaces, and $n$ is an integer invertible in $\O_S^+$. Then we can restrict the functor $\cal{D}_\et(-; \Z/n\Z)\colon \rm{Corr}(\cal{C}') \to \Cat_\infty$ onto $\rm{Corr}(\cal{C})$ to get the \'etale $6$-functor formalism
\[
\cal{D}_\et(-; \Z/n\Z) \colon \rm{Corr}(\cal{C}) \to \Cat_\infty.
\]
\end{rmk1}

\begin{rmk1}\label{rmk:algebraic-6-functors} Let $S$ is a scheme, $\cal{C}$ the category of locally finitely presented $S$-schemes, and $n$ any integer. Then one can similarly construct the \'etale $6$-functor formalism
\[
\cal{D}_\et(-; \Z/n\Z) \colon \rm{Corr}(\cal{C}) \to \Cat_\infty.
\]
 The proof of Theorem~\ref{thm:etale-six-functors} applies essentially verbatim. The main non-trivial input needed is:
 \begin{enumerate}
     \item (\cite[Thm.\,4.1]{Conrad2007}) Nagata's compatification;
     \item (\cite[Prop.\,5.9.6]{Lei-Fu}) the natural morphism 
     \[
     \bigoplus_I \rm{R}f_* \F_i \to  \rm{R}f_*\big(\bigoplus_I \F_i\big)
     \]
     for a proper morphism $f$ and a collection of sheaves $\{\F_i\in \rm{Shv}(X_\et; \Z/n\Z)\}_{i\in I}$;
     \item (\cite[Thm\,7.3.1]{Lei-Fu}) proper base-change for bounded below complexes;
     \item projection formula for proper $f$ and bounded below complexes (in this case, it follows automatically from (2) and (3) by arguing on stalks, see \cite[7.4.7]{Lei-Fu});
     \item finite cohomological dimension of $f_*$ for a proper $f$ (one can either adapt the proof of \cite[Thm.\,7.4.5]{Lei-Fu}\footnote{For this, one notices that (1) and (2) already imply that $\rm{R}f_!$ is a well-defined functor} or \cite[Cor.\,7.5.6]{Lei-Fu}).
 \end{enumerate}
    See also \cite[Appendix to Lecture VII]{scholze-notes} for a related discussion (with a slightly different $\cal{C}$). 
\end{rmk1}

\section{Overconvergent sheaves}\label{section:overconvergent}

In this section, we prove two basic facts about overconvergent sheaves. Both facts can be deduced from the results in \cite{H3}. However, the proofs in \cite{H3} seem to be unnecessary difficult, so we prefer to include alternative proofs of these facts in these notes.

\begin{defn}(\cite[Def.\,8.2.1]{H3}) An \'etale sheaf $\F \in \rm{Shv}(X_\et; \Z/n\Z)$ on a locally noetherian analytic adic space is  {\it overconvergent} if for every specialization of geometric points $u\colon \ov{\eta}  \to \ov{s}$, the specialization morphism
\[
\F_{\ov{s}} \to \F_{\ov{\eta}}
\]
is an isomorphism.
\end{defn}

First, we give the following basic example of overconvergent sheaves:

\begin{lemma}\label{lemma:algebraic-overconvergent} Let $S=\Spa(A, A^+)$ be a strongly noetherian Tate affinoid, let $X$ be a finite type $A$-scheme, let $c_{X/S}\colon X^{\an/S} \to X$ be the relative analytification morphism, and let $\F$ be an \'etale sheaf on $X$. Then $c_{X/S}^*\F$ is overconvergent.
\end{lemma}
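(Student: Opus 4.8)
The plan is to reduce the question to a statement about stalks and specialization maps, and then exploit the fact that the analytification morphism $c_{X/S}$ factors through the adic generic fiber of a formal completion, where specializations of geometric points can be understood in purely algebraic terms. Concretely, the statement we must verify is: for every specialization of geometric points $u\colon \ov\eta \to \ov s$ of $X^{\an/S}$, the specialization map $(c_{X/S}^*\F)_{\ov s} \to (c_{X/S}^*\F)_{\ov\eta}$ is an isomorphism. Since pullback commutes with taking stalks at geometric points, we have $(c_{X/S}^*\F)_{\ov s} = \F_{c_{X/S}(\ov s)}$ and similarly for $\ov\eta$, where now $c_{X/S}(\ov s)$ and $c_{X/S}(\ov\eta)$ are geometric points of the \emph{scheme} $X$ (i.e.\ separably closed field valued points). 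So the entire content is that $c_{X/S}$ carries a specialization of geometric points of $X^{\an/S}$ to a \emph{trivial} specialization of geometric points of $X$ — that is, the two images factor through the same point of the scheme $X$ after passing to a common separably closed field — at which point the specialization map is between the stalk of $\F$ at one geometric point and itself, hence an isomorphism.

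\textbf{Reduction to the local picture.} To make this precise I would argue locally: the question is local on $X^{\an/S}$ and on $X$, so we may assume $X = \Spec B$ is affine, and we may further shrink $X^{\an/S}$ to an affinoid open $U = \Spa(D, D^+)$. By the factorization used in the proof of Lemma~\ref{lemma:proj-commutes-with-analytification}, the composite $(U, \O_U) \to (X^{\an/S}, \O_{X^{\an/S}}) \to (X, \O_X)$ factors through $c_U\colon (U, \O_U) \to (\Spec D, \O_{\Spec D})$, the canonical map of locally ringed spaces of Section~\ref{section:analytification}, followed by $\Spec D \to \Spec B$. Thus it suffices to treat $\F$ on $\Spec D$ and show $c_U^*\F$ is overconvergent — in other words, it is enough to prove the lemma when $X^{\an/S} = \Spa(D, D^+)$ is itself affinoid and $X = \Spec D$ with $c_{X/S} = c_U$ the structural comparison map. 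Now a specialization of geometric points $\ov\eta \to \ov s$ of $\Spa(D, D^+)$ amounts to a valuation ring of $\kappa(\ov s)$ refining the one cutting out $\ov\eta$; the key point is that under $c_U$ the support (i.e.\ the underlying prime of $D$) of a point and of any of its specializations in the adic space \emph{coincide}, because the specializations of a point $v\in \Spa(D,D^+)$ that are detected by the analytic structure are the \emph{vertical} (valuation-theoretic) ones, which do not change the kernel of $D \to \kappa(v)$. Hence $c_U(\ov s)$ and $c_U(\ov\eta)$ are geometric points of $\Spec D$ lying over the same point, and the specialization map on stalks of $\F$ is the identity of $\F$ at that point, up to the canonical identification of the two stalks coming from the compatible field extensions.

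\textbf{Where the work is.} The genuinely substantive step is the assertion that $c_{X/S}$ sends analytic specializations to algebraically trivial ones, i.e.\ that $\mathrm{supp}(v)$ is constant along specializations $v \rightsquigarrow v'$ in $\Spa(D, D^+)$; this is where I would invoke the structure theory of points of analytic adic spaces (for a Tate affinoid $(D,D^+)$, the specializations of a point have the same support — they differ only by shrinking the valuation ring inside the residue field). Everything else is bookkeeping: checking that pullback commutes with stalks at geometric points (standard topos theory), checking that the specialization map for $c_U^*\F$ is computed as claimed, and checking that the factorization through $c_U$ is compatible with forming stalks. I expect no essential difficulty beyond carefully matching up the geometric points on the two sides and citing the right statement about supports of specializations; once that is in place the conclusion that $\F_{\ov s}\to \F_{\ov\eta}$ is an isomorphism is immediate. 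An alternative, perhaps cleaner, route is to use the identification (from the proof of Lemma~\ref{lemma:flatness}) of $X^{\an/S}$ with the adic generic fiber $\wdh{\ov X}_\eta$ of a $\varpi$-adic formal completion of a compactification, and to invoke Huber's comparison results relating étale sheaves on a formal scheme's generic fiber to those on the special/algebraic side, which directly yield overconvergence; but the stalk-wise argument above is self-contained given the results already recorded in the paper.
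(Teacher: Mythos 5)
Your argument is correct, but it runs on a different engine than the paper's. Both proofs ultimately rest on the same geometric fact, namely that $c_{X/S}$ collapses the entire set of specializations/generalizations of a point of $X^{\an/S}$ to a single point of the scheme $X$. The paper gets this for free: it reduces overconvergence to showing that for every geometric point $s\colon \Spa(C,C^+) \to X^{\an/S}$ (with $C$ algebraically closed) the pullback $s^*c_{X/S}^*\F$ is constant, and this is immediate because the composite $\Spa(C,C^+) \to X$ factors through $\Spec C \to X$ by the universal property of affine schemes (all local rings of $\Spa(C,C^+)$ equal $C$), while every \'etale sheaf on $\Spec C$ is constant; constant sheaves on $\Spa(C,C^+)$ have all specialization maps isomorphisms. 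You instead argue stalk by stalk, and your substantive input is the structure theory of specializations in analytic adic spaces: every specialization inside a Tate affinoid $\Spa(D,D^+)$ is vertical and hence preserves the support. That claim is true (it is essentially \cite[Lemma 1.1.10]{H3}, and can be checked directly: if $v'\rightsquigarrow v$ and $v(a)=0$, then $v$ lies in the rational subsets $\{w \mid w(a)\le w(\varpi^n)\ne 0\}$ for all $n$, hence so does $v'$, forcing $v'(a)=0$ by continuity), so your proof goes through; but it is a heavier input than the paper needs, and it obliges you to carry out the bookkeeping you flag — identifying the specialization map $(c_{X/S}^*\F)_{\ov s}\to(c_{X/S}^*\F)_{\ov\eta}$, in the sense of \cite[Def.\,8.2.1]{H3}, with a morphism of geometric points of $X$ over a trivial specialization — which the paper's reduction to constancy on $\Spa(C,C^+)$ sidesteps entirely. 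Net assessment: correct, slightly less economical; the factorization through $\Spec C$ is the cleaner way to package the same idea, and your suggested detour through formal models is not needed.
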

\begin{proof}
    It suffices to show that, for any algebraically closed non-archimedean field $C$ with an open bounded valuation ring $C^+\subset C$ and a morphism $s\colon Y=\Spa(C, C^+) \to X^{\an/S}$, the pullback $s^*c_{X/S}^*\F$ is a constant sheaf. For this, we consider the following commutative diagram
    \[
        \begin{tikzcd}
            Y=\Spa(C, C^+) \arrow{d}{s} \arrow{r}{c_{Y}} & \Spec C \arrow{d}{s^{\rm{alg}}} \\
            X^{\an/S} \arrow{r}{c_{X/S}} & (X, \O_X).
        \end{tikzcd}
    \]
    This implies that $s^*c_{X/S}^*\F \simeq c_Y^*s^{\rm{alg}, *} \F$. Now the result follows from the observation that any sheaf on $(\Spec C)_\et$ is constant since $C$ is algebraically closed.
\end{proof}

\begin{lemma}\label{lemma:overconvergent} Let $Y$ be a locally noetherian analytic adic space, let $j\colon X \to Y$ be a quasi-compact dense pro-open immersion, let $n$ an integer, and let $\F\in \rm{Shv}(Y_\et; \Z/n\Z)$ be an overconvergent \'etale sheaf on $Y$. Then the natural morphism $\F\to \rm{R}j_*j^*\F$ is an isomorphism. In particular, the natural morphism
\[
\rm{R}\Gamma(Y, \F) \to \rm{R}\Gamma(X, j^*\F)
\]
is an isomorphism. 
\end{lemma}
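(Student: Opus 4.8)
The plan is to check that the map $\F \to \rm{R}j_*j^*\F$ is an isomorphism on stalks at every geometric point $\ov{y} \to Y$; the displayed ``in particular'' then follows by applying $\rm{R}\Gamma(Y,-)$, since $\rm{R}\Gamma(Y,\rm{R}j_*j^*\F) = \rm{R}\Gamma(X,j^*\F)$. First I would rewrite both stalks as filtered colimits over the affinoid étale neighborhoods $V \to Y$ of $\ov y$: using that $\rm{R}\Gamma$ commutes with the relevant cofiltered limits of quasi-compact quasi-separated adic spaces (\cite[\textsection 2.4--2.5]{H3}), and that $j$ is a \emph{quasi-compact} pro-open immersion, so that $V\times_Y X$ is again a quasi-compact pro-open subspace of $V$, one obtains
\[
(\rm{R}j_*j^*\F)_{\ov y} \simeq \colim_{(V,\ov y)} \rm{R}\Gamma\big(V\times_Y X,\, j^*\F|_{V\times_Y X}\big), \qquad \F_{\ov y} \simeq \colim_{(V,\ov y)} \rm{R}\Gamma(V, \F|_V),
\]
the unit map being induced by the restriction maps. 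So it suffices to show that for each such $V$ the restriction $\rm{R}\Gamma(V,\F|_V) \to \rm{R}\Gamma(V\times_Y X, j^*\F|)$ is an isomorphism.

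The key input is Huber's theory of overconvergent sheaves (\cite[\textsection 8.2]{H3}): for an analytic adic space $W$ with separated (maximal Hausdorff) quotient $\mathfrak{s}_W\colon W \to [W]$, pullback $\mathfrak{s}_W^*$ identifies $\rm{Shv}([W];\Z/n\Z)$ with the full subcategory of overconvergent sheaves on $W$, with quasi-inverse $\rm{R}\mathfrak{s}_{W*}$; in particular $\rm{R}\Gamma(W,\mathfrak{s}_W^*\G) \simeq \rm{R}\Gamma([W],\G)$, and the formation of $[W]$ is functorial, sending open immersions to open immersions. Since $\F|_V$ and its further restriction to $V\times_Y X$ are overconvergent (overconvergence is inherited by pullbacks), applying this to $W=V$ and to $W=V\times_Y X$ reduces the claim to showing that the induced open immersion $[V\times_Y X] \hookrightarrow [V]$ is an isomorphism (the compatibility of the two descent data along this map is then automatic by uniqueness of descent).

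Finally I would argue that $V\times_Y X$ is a \emph{dense} open (pro-)subspace of $V$: density is preserved because $j$ is dense and $V\to Y$ is étale, hence an open map, and preimages of dense subsets under open maps are dense. A dense open subset of an analytic adic space contains all of its generization-maximal (rank-$1$) points, and these may be identified with the points of the separated quotient; so writing $V\times_Y X = \lim_i (V\times_Y U_i)$ with each $V\times_Y U_i$ a dense open subset of $V$ (it contains $V\times_Y X$), each $[V\times_Y U_i]$ equals all of $[V]$, hence so does $[V\times_Y X]$, which gives the desired isomorphism. The main obstacle is thus not a single hard estimate but rather invoking Huber's overconvergent machinery in the right generality and recording the (essentially formal, but slightly fiddly) facts that dense (pro-)open immersions are invisible to the separated quotient and that étale localization is compatible with everything in sight; once these are in place the statement is immediate.
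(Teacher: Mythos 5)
Your strategy---reduce to stalks, then invoke an identification of overconvergent sheaves with sheaves on the separated quotient---is genuinely different from the paper's, but the ``key input'' you rely on is not available in the form or generality you state it, and this is a real gap. The separated quotient $[W]$ is only a topological space, and pullback along $W\to[W]$ does \emph{not} identify sheaves on the topological space $[W]$ with overconvergent \'etale sheaves on $W$: already for $W=\Spa(K,\O_K)$ with $K$ a non--algebraically closed non-archimedean field, $[W]$ is a single point while overconvergent \'etale sheaves on $W$ are discrete $\Gal(K^{\sep}/K)$-modules. The correct statement of this type is Huber's comparison with the \emph{\'etale topos of the associated Berkovich space} (\cite[\textsection 8.3]{H3}), which carries hypotheses (tautness, locally of $+$-weakly finite type over an affinoid field) that are absent from the lemma, stated for arbitrary locally noetherian analytic adic spaces; moreover the proof of that comparison in \cite{H3} essentially contains the statement you are trying to prove, so quoting it here is close to circular. (The paper's introduction to this section explicitly notes that the lemma \emph{can} be extracted from \cite{H3} but deliberately avoids that machinery.)

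Two further points. First, ``a dense open subset contains all rank-$1$ points'' is false in general: the punctured closed disc is dense in the closed disc but misses the rank-$1$ classical point $T=0$. What saves your argument is that the subsets you use are quasi-compact (pro-)open, hence pro-constructible, so their closure is exactly the set of specializations of their points, and density then does force them to contain every maximal point; but the justification as written is wrong, and likewise $[U]\to[W]$ is typically a \emph{closed} embedding, not an open one, for a quasi-compact open $U\subset W$. Second, for contrast: the paper's proof is far more elementary. It reduces, via stalks at geometric points and \cite[Prop.\,2.6.1]{H3}, to the case $Y=\Spa(C,C^+)$ with $C$ algebraically closed, where $j$ is the inclusion of the set of generalizations of a point; there the higher cohomology vanishes because every surjective \'etale map to such a space splits, and the map on $\rm{H}^0$ is literally a specialization map of stalks, which is an isomorphism by the very definition of overconvergence. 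To salvage your approach you would need either to prove the Berkovich-type equivalence in the required generality or to replace it by such a direct stalkwise computation.
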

\begin{proof}
    First, we note that we can check that 
    \begin{equation}\label{eqn:overconvergent}
        \F \to \rm{R}j_*j^*\F
    \end{equation} 
    is an isomorphism at the geometric points of $Y$. Therefore, we can assume that $j$ is of the form $j\colon \Spa(C, C'^+) \to \Spa(C, C^+)$ for an algebraically closed non-archimedean field $C$ and open and bounded valuation subrings $C'^+\subset C^+\subset C$. In this case, it suffices to show that 
    \[
    \rm{H}^i(\Spa(C, C'^+), j^*\F)=0
    \]
    for $i\geq 1$, and 
    \[
    \rm{H}^0(\Spa(C, C^+), \F) \to \rm{H}^0(\Spa(C, C'^+), j^*\F)
    \]
    is an isomorphism. The first follows from the fact that any {\it surjective} \'etale morphism $S \to \Spa(C, C'^+)$ has a section\footnote{First reduce to an affinoid $S$, then use \cite[Lemma 2.2.8]{H3} and an equivalence $\Spa(C, C'^+)_{\fet}\simeq (\Spec C)_{\fet}$ to construct a section.}. \smallskip
    
    Now we show the second claim. Let $\ov{s}=\rm{id}\colon \Spa(C, C^+) \to \Spa(C, C^+)$ be the geometric point of $\Spa(C, C^+)$ corresponding to its closed point, and let $\ov{s}'=j\colon \Spa(C, C'^+) \to \Spa(C, C^+)$ be the geometric point corresponding to the closed point of $\Spa(C, C'^+)$. Then we have $\rm{H}^0(\Spa(C, C^+), \F)=\F_s$ and $\rm{H}^0(\Spa(C, C'^+), j^*\F)\simeq \F_{s'}$. So the overconvergent assumption implies that the natural morphism
    \[
    \F_{\ov{s}} \to \F_{\ov{s}'}
    \]
    is an isomorphism finishing the proof. 
\end{proof}

Finally, we can show that overconvergent sheaves are closed under higher derived pushforwards along finite type, quasi-separated morphisms. In combination with Lemma~\ref{lemma:algebraic-overconvergent}, this allows to produce new interesting examples of overconvergent sheaves in analytic geometry. 

\begin{lemma}\label{lemma:overconvergent-preservation} Let $f\colon X \to S$ be a finite type, quasi-separated morphism of locally noetherian analytic adic spaces, $n$ an integer, and $\F\in \rm{Shv}(X; \Z/n\Z)$ an overconvergent sheaf. Then $\rm{R}^if_*\F$ is overconvergent for any $i\geq 0$.
\end{lemma}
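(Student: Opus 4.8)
The plan is to test overconvergence of $\rm{R}^if_*\F$ on geometric stalks, using Huber's base change to turn each specialization map between stalks into a restriction morphism in \'etale cohomology, and then to invoke Lemma~\ref{lemma:overconvergent}. Since overconvergence is checked on geometric stalks and their specializations and the assertion is local on $S$, fix a specialization of geometric points $u\colon \ov{\eta} \to \ov{s}$ of $S$, write $\ov{s} = \Spa(C, C^+)\to S$ with $C$ algebraically closed, and (replacing $S$ by a quasi-compact open and then base changing along $\ov{s}$) assume $S = \Spa(C, C^+)$; then $\ov{\eta}$ is the canonical pro-open immersion $\Spa(C,C'^+) \hookrightarrow \Spa(C,C^+)$ for some open bounded valuation subring $C^+ \subseteq C'^+ \subseteq C$, exactly as in the proof of Lemma~\ref{lemma:overconvergent}. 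Put $X_{\ov{s}} \coloneqq X\times_S \Spa(C,C^+)$ and $X_{\ov{\eta}} \coloneqq X\times_S\Spa(C,C'^+)$ (both still locally noetherian analytic adic spaces, being finite type over $\Spa(C,C^+)$), and let $j\colon X_{\ov{\eta}} \hookrightarrow X_{\ov{s}}$ be the base change of $\Spa(C,C'^+)\hookrightarrow\Spa(C,C^+)$, with $\F_{\ov{s}}$, $\F_{\ov{\eta}}$ the pullbacks of $\F$ (so $j^*\F_{\ov{s}} = \F_{\ov{\eta}}$). By Huber's base change theorem \cite[Prop.\,2.6.1]{H3} there are canonical identifications $(\rm{R}^if_*\F)_{\ov{s}} \cong \rm{H}^i(X_{\ov{s}}, \F_{\ov{s}})$ and $(\rm{R}^if_*\F)_{\ov{\eta}} \cong \rm{H}^i(X_{\ov{\eta}}, \F_{\ov{\eta}})$ under which the specialization map induced by $u$ is the restriction map along $j$.

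Next I would record two auxiliary facts. First, the pullback of an overconvergent sheaf along any morphism is overconvergent (immediate from the description of stalks and specialization maps), so $\F_{\ov{s}}$ is overconvergent on $X_{\ov{s}}$. Second, $j$ is a quasi-compact dense pro-open immersion: being a pro-open immersion and being quasi-compact are stable under base change and hold for $\Spa(C,C'^+)\hookrightarrow\Spa(C,C^+)$, and for density I would use the standard fact that an open subset of an analytic adic space is dense once it contains every rank-$1$ point (each analytic point has a rank-$1$ generization and opens are stable under generization). Since $\Spa(C,C^+)$ and $X_{\ov{s}}$ are analytic and a finite type morphism carries rank-$1$ points to rank-$1$ points, it is enough that $\Spa(C,C'^+)$ contains every rank-$1$ point of $\Spa(C,C^+)$; this holds because the valuation rings containing $C^+$ form a chain under inclusion and any rank-$1$ member of this chain must contain $C'^+$.

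With these facts, Lemma~\ref{lemma:overconvergent} applied to $j$ and $\F_{\ov{s}}$ yields that $\rm{R}\Gamma(X_{\ov{s}}, \F_{\ov{s}}) \to \rm{R}\Gamma(X_{\ov{\eta}}, \F_{\ov{\eta}})$ is an isomorphism; passing to $\rm{H}^i$ and using the base change identifications shows the specialization map $(\rm{R}^if_*\F)_{\ov{s}} \to (\rm{R}^if_*\F)_{\ov{\eta}}$ is an isomorphism, and since $u$ was arbitrary $\rm{R}^if_*\F$ is overconvergent for all $i$. I expect the main obstacle to be the density of $j$ — i.e.\ controlling rank-$1$ points along the base change $X_{\ov{s}}\to\Spa(C,C^+)$ — together with the bookkeeping that Huber's base-change isomorphism is compatible with specialization morphisms; the rest is formal or is already contained in Lemma~\ref{lemma:overconvergent}.
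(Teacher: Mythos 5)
Your proposal is correct and follows essentially the same route as the paper: reduce via Huber's base change \cite[Prop.\,2.6.1]{H3} to comparing $\rm{H}^i$ of the fibers over $\Spa(C,C^+)$ and its generization, then apply Lemma~\ref{lemma:overconvergent}. The extra details you supply (pullbacks of overconvergent sheaves are overconvergent, and the base-changed pro-open immersion is quasi-compact and dense because it contains all rank-$1$ points) are correct and merely make explicit what the paper leaves implicit.
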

\begin{proof}
    By \cite[Prop.\,2.6.1]{H3}, it suffices to show that, for any algebrically closed non-archimedean field $C$ with an open bounded valuation ring $C^+$ and a morphism $\Spa(C, C^+) \to S$, the natural morphism
    \[
    \rm{H}^i(X_{\Spa(C, C^+)}, \F) \to \rm{H}^i(X_{\Spa(C, \O_C)}, \F)
    \]
    is an isomorphism. This follows from Lemma~\ref{lemma:overconvergent}.
\end{proof}

\section{Categorical properties of lisse and constructible sheaves}\label{section:lisse}

In this section, we show that lisse and constructible \'etale sheaves on a locally noetherian analytic adic space (resp. a scheme) $X$ admit a nice categorical description. The results of this section are well-known to the experts, but it seems hard to find them explicitly stated in the existing literature. \smallskip

For the rest of the section, we fix a locally noetherian analytic adic space (resp. a scheme) $X$ and an integer $n>0$. \smallskip

We recall that the derived category $\cal{D}(X_\et; \Z/n\Z)$ admits a natural structure of a symmetric monoidal category (with the monoidal structure given by $-\otimes^L -$). In particular, it there is a well-defined notion of dualizable objects in $\cal{D}(X_\et; \Z/n\Z)$, see \cite[\href{https://stacks.math.columbia.edu/tag/0FFP}{Tag 0FFP}]{stacks-project}. 

\begin{lemma}\label{lemma:lisse-categorical} Let $X$ be a locally noetherian analytic adic space or a scheme, and let $n>0$ be an integer. Then the following are equivalent:
\begin{enumerate}
    \item $\F\in \cal{D}(X_\et; \Z/n\Z)$ is dualizable;
    \item $\F\in \cal{D}(X_\et; \Z/n\Z)$ is perfect;
    \item $\F$ lies in $\cal{D}^{(b)}_{\rm{lisse}}(X_\et; \Z/n\Z)$ and, for each geometric point $\ov{s} \to X$, the stalk $\F_{\ov{s}}$ is a perfect complex in $\cal{D}(\Z/n\Z)$.
\end{enumerate}
\end{lemma}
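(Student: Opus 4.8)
The plan is to prove the cycle of implications $(2) \Rightarrow (1) \Rightarrow (3) \Rightarrow (2)$, all of which are either formal or reduce to standard facts; the only delicate point is the local structure of dualizable objects.

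The implication $(2) \Rightarrow (1)$ is purely formal: a perfect complex is a finite colimit of shifts of $\O_X$-modules of the form $j_!\underline{\Z/n\Z}$ for $j$ an \'etale morphism, and each such object is dualizable (its dual is $j_!\underline{\Z/n\Z}$ itself, since $j_!$ has the right adjoint $j^*$ which is also its left adjoint up to the relevant identifications — more simply, one checks $j_!\underline{\Z/n\Z}$ is an invertible-up-to-retract object using the projection formula from Lemma~\ref{lemma:base-change-etale}); the dualizable objects form a thick subcategory closed under finite colimits and retracts, so they contain all perfect complexes. For $(1) \Rightarrow (3)$: first, dualizability is stable under pullback, so for each geometric point $\ov{s}\to X$ the stalk $\F_{\ov{s}}$ is a dualizable object of $\cal{D}(\Z/n\Z)$, hence a perfect complex of $\Z/n\Z$-modules (over the artinian ring $\Z/n\Z$ dualizable $=$ perfect). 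It remains to see that $\F$ is locally bounded with locally constant cohomology sheaves of finite type. Working \'etale-locally we may assume $\F$ is dualizable with dual $\F^\vee$ and the evaluation/coevaluation exhibit $\F\otimes^L\F^\vee$ with a retraction onto $\underline{\Z/n\Z}$; restricting to a point and using the perfectness of the stalk, a standard Nakayama-type argument shows that near each point $\F$ is quasi-isomorphic to a bounded complex of finite free $\Z/n\Z$-modules, and by the same deformation-to-a-point argument (or by noting that a dualizable object in a symmetric monoidal category with a compatible $t$-structure has locally constant cohomology) we conclude $\F\in\cal{D}^{(b)}_{\rm{lisse}}(X_\et;\Z/n\Z)$.

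For the remaining implication $(3) \Rightarrow (2)$: the statement is local on $X$, so we may assume $X$ is quasi-compact quasi-separated and that $\F$ is a bounded complex of lisse sheaves. By the standard d\'evissage on the cohomology sheaves (the perfect complexes form a thick subcategory, so one reduces to a single lisse sheaf concentrated in one degree), it suffices to treat a locally constant sheaf $\cal{L}$ of finite $\Z/n\Z$-modules whose stalks are perfect $\Z/n\Z$-complexes — but a lisse sheaf concentrated in one degree has stalks equal to a single finite module, which is automatically a perfect $\Z/n\Z$-complex iff it has finite Tor-dimension; using that $\cal{L}$ is \'etale-locally constant, after passing to a finite \'etale cover trivializing it we reduce to $\cal{L}=\underline{M}$ for a finite $\Z/n\Z$-module $M$ with $\underline{M}$ perfect, and then to $\underline{\Z/n\Z}$ itself by taking a finite free resolution of $M$ over $\Z/n\Z$ and sheafifying. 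Finally one descends perfectness along the finite \'etale cover, using that $g_*$ for $g$ finite \'etale preserves perfect complexes (it is a retract of $g_!=g_*$ composed with trace, or directly: $g_*\underline{\Z/n\Z}$ is lisse with perfect stalks of the trivial kind) and that perfectness can be checked after a finite \'etale cover since such covers are of finite cohomological dimension and the pullback is conservative.

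The main obstacle is the implication $(1)\Rightarrow(3)$, specifically extracting \emph{local constancy and boundedness} of $\F$ from the abstract dualizability together with the pointwise perfectness of stalks: one must upgrade a fiberwise statement to an honest local statement on the \'etale site, which is where a careful Nakayama/approximation argument (as in the scheme case, cf.\ the treatment of perfect complexes on the \'etale site) is needed. Everything else is either formal nonsense about symmetric monoidal stable categories or a routine reduction to the case of the constant sheaf $\underline{\Z/n\Z}$.
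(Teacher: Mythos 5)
Two of your three implications have genuine gaps. For $(2)\Rightarrow(1)$: the objects $j_!\ud{\Z/n\Z}$ for a general \'etale $j$ are \emph{not} dualizable. If $j\colon U\to X$ is an open immersion with $U\neq X$, a dual for $j_!\ud{\Z/n\Z}$ would force $\rm{R}j_*j^*(-)\simeq (-)\otimes^L \rm{R}j_*\ud{\Z/n\Z}$, i.e.\ a projection formula for $\rm{R}j_*$, which fails; equivalently, $j_!\ud{\Z/n\Z}$ is constructible but not lisse, so by the very lemma you are proving it cannot be dualizable (your self-duality claim is only correct for $j$ finite \'etale). Worse, the thick subcategory generated by the $j_!\ud{\Z/n\Z}$ is all of $\cal{D}^{b}_{\rm{cons}}$ (compare Lemma~\ref{lemma:abstract-nonsense}), which strictly contains the perfect complexes, so your argument, if it worked, would prove the false statement that every constructible complex is dualizable. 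The correct route is local: a perfect complex is locally a bounded complex of direct summands of finite free modules, hence locally dualizable with dual $\rm{R}\ud{\Hom}_X(\F,\ud{\Z/n\Z})$, and the triangle identities can be checked locally. This is exactly \cite[\href{https://stacks.math.columbia.edu/tag/0FPV}{Tag 0FPV}]{stacks-project}, which the paper invokes for the equivalence $(1)\Leftrightarrow(2)$ in \emph{both} directions --- thereby also disposing of the ``Nakayama-type'' step that you correctly flag as the delicate point of $(1)\Rightarrow(3)$ but do not actually carry out.

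For $(3)\Rightarrow(2)$ the d\'evissage to a single cohomology sheaf is invalid. Hypothesis (3) says the \emph{total} stalk $\F_{\ov{s}}$ is perfect, not that each $\cal{H}^k(\F)_{\ov{s}}$ is, and a perfect complex can have non-perfect cohomology: over $\Z/4\Z$ the complex $\Z/4\Z\xr{2}\Z/4\Z$ is perfect while both of its cohomology modules equal $\Z/2\Z$, which is not perfect over $\Z/4\Z$. So you can neither transfer the hypothesis to the individual cohomology sheaves nor conclude perfectness of $\F$ from perfectness of its cohomology sheaves; the reduction ``to a single lisse sheaf in one degree'' loses exactly the information you need. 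The paper's argument avoids this entirely: after localizing, \cite[\href{https://stacks.math.columbia.edu/tag/094G}{Tag 094G}]{stacks-project} gives a finite \'etale covering on which the \emph{whole complex} $\F$ becomes constant, $\F|_{U_i}\simeq \ud{M_i^\bullet}$; the complex $M_i^\bullet$ is then identified with a stalk of $\F$, hence perfect by hypothesis, and perfectness is an \'etale-local property by definition (no descent argument along the cover is needed). I recommend replacing your $(2)\Rightarrow(1)$ and $(3)\Rightarrow(2)$ with these local arguments.
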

\begin{proof}
    First, we note that \cite[\href{https://stacks.math.columbia.edu/tag/0FPV}{Tag 0FPV}]{stacks-project} ensures that $\F$ is dualizable if and only if $\F$ is perfect.\smallskip
    
    Now suppose that $\F$ is a perfect complex. Then clearly all the stalks $\F_{\ov{s}}$ are perfect objects of $\cal{D}(\Z/n\Z)$. Furthermore, the definition of a perfect complex (see \cite[\href{https://stacks.math.columbia.edu/tag/08G5}{Tag 08G5}]{stacks-project}), \cite[\href{https://stacks.math.columbia.edu/tag/08G9}{Tag 08G9}]{stacks-project}, and the fact that lisse sheaves form a weak Serre subcategory of $\rm{Shv}(X_\et; \Z/n\Z)$ imply that the object $\F$ is locally bounded and has lisse cohomology sheaves, i.e. $\F\in \cal{D}^{(b)}_{\rm{lisse}}(X_\et; \Z/n\Z)$. \smallskip
    
    Now we suppose that $\F$ lies in $\cal{D}^{(b)}_{\rm{lisse}}(X_\et; \Z/n\Z)$ and all its stalks are perfects. We wish to show that $\F$ is perfect. This is a local question, so we can assume that $X$ is qcqs, and thus $\F$ lies in $\cal{D}^b_{\rm{lisse}}(X_\et; \Z/n\Z)$. Now \cite[\href{https://stacks.math.columbia.edu/tag/094G}{Tag 094G}]{stacks-project} implies that there is a (finite) covering $\{U_i\}_{i\in I} \to X$ such that 
    \[
    \F|_{U_i} \simeq \ud{M_i^\bullet}
    \]
    for some finite complexes of finite $\Z/n\Z$-modules $M_i^\bullet$. Using that all stalks of $\F$ are perfect as objects of $\cal{D}(\Z/n\Z)$, we conclude that each $M_i^\bullet$ must be perfect. This implies that $\F$ is a perfect object of $\cal{D}(X_\et; \Z/n\Z)$.
\end{proof}

Now we discuss the categorical description of constructible sheaves:\footnote{We refer to \cite[\textsection 2.7]{H3} and \cite[\href{https://stacks.math.columbia.edu/tag/05BE}{Tag 05BE}]{stacks-project} for the definition of constructible sheaves in the adic and schematic setups respectively.}

\begin{lemma}\label{lemma:abstract-nonsense} Let $X$ be a qcqs noetherian analytic adic space or a qcqs scheme, and let $n>0$ and $N$ be some integers. Then  an object $\F\in \cal{D}^{\geq -N}(X_\et; \Z/n\Z)$ is compact if and only if $\F$ lies in $\cal{D}^{b, \geq -N}_{\rm{cons}}(X_\et; \Z/n\Z)$, i.e., $\F$ is bounded and all its cohomology sheaves are constructible.
\end{lemma}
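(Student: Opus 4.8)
The plan is to prove both implications by hand. The external inputs I would use are: the category $\rm{Shv}(X_\et;\Z/n\Z)$ is generated under colimits by the constructible sheaves $j_!\ud{\Z/n\Z}$ for $j\colon U\to X$ quasi-compact \'etale, these are compact objects of $\rm{Shv}(X_\et;\Z/n\Z)$, every \'etale sheaf of $\Z/n\Z$-modules is a filtered colimit of constructible ones, constructible sheaves form an abelian subcategory in which every object admits a (possibly infinite) resolution by finite direct sums of such $j_!\ud{\Z/n\Z}$, and for $U$ qcqs the functors $\rm{H}^q(U_\et;-)$ commute with filtered colimits of sheaves (the \'etale topos of a qcqs space is coherent). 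In the schematic case these are classical (see \cite{stacks-project}), in the adic case they are in \cite[\textsection 2.7]{H3} (see also \cite[Lemma 2.3.13]{H3}). Write $\cal{C}\subset \cal{D}(X_\et;\Z/n\Z)$ for the full subcategory of objects $\F$ such that $\rm{R}\Hom_X(\F,-)$ preserves filtered colimits of diagrams contained in $\cal{D}^{\geq M}(X_\et;\Z/n\Z)$, for every $M\in\Z$; since $\cal{D}^{\geq M}$ is closed under filtered colimits (these then agree with colimits computed in $\cal{D}(X_\et;\Z/n\Z)$), a short argument with the long exact sequence of a triangle shows that $\cal{C}$ is a thick subcategory, closed under shifts in both directions, cofibres and retracts.

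\emph{Constructible bounded $\Rightarrow$ compact.} A bounded complex is a finite iterated cofibre of shifts of its cohomology sheaves, and $\cal{C}$ is thick, so it suffices to prove $\cal{G}[0]\in\cal{C}$ for a single constructible sheaf $\cal{G}$. Choose a resolution $\cdots\to\cal{P}_1\to\cal{P}_0\to\cal{G}\to 0$ with each $\cal{P}_s$ a \emph{finite} direct sum of sheaves $j_!\ud{\Z/n\Z}$ with $j$ quasi-compact \'etale. For $\G\in\cal{D}^{\geq M}$ the associated spectral sequence reads $E_1^{s,t}=\rm{Ext}^t_X(\cal{P}_s,\G)=\bigoplus_a\rm{H}^t\big(U_{s,a};\G|_{U_{s,a}}\big)$ and abuts to $\rm{Ext}^{s+t}_X(\cal{G},\G)$; because $s,t\geq 0$, only finitely many $(s,t)$ lie in each total degree, and each $E_1^{s,t}$ commutes with filtered colimits of $\G$ inside $\cal{D}^{\geq M}$ — here one uses again that, over the qcqs space $U_{s,a}$ and for a bounded-below complex, the hypercohomology spectral sequence has finitely many terms in each total degree, reducing to the fact that $\rm{H}^q(U_{s,a};-)$ commutes with filtered colimits of sheaves. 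Hence $\rm{R}\Hom_X(\cal{G},-)$ preserves such colimits, so $\cal{G}[0]\in\cal{C}$. Therefore $\cal{D}^{b,\geq -N}_{\rm{cons}}(X_\et;\Z/n\Z)\subset\cal{C}$, and for $\F$ in this category $\Hom_{\cal{D}^{\geq -N}}(\F,-)=\rm{H}^0\rm{R}\Hom_X(\F,-)$ preserves filtered colimits, i.e. $\F$ is compact.

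\emph{Compact $\Rightarrow$ constructible bounded.} Let $\F\in\cal{D}^{\geq -N}(X_\et;\Z/n\Z)$ be compact. Writing $\F=\colim_m\tau^{\leq m}\F$ as a filtered colimit in $\cal{D}^{\geq -N}$, compactness forces $\rm{id}_\F$ to factor through some $\tau^{\leq m_0}\F$, so $\F$ is a retract of $\tau^{\leq m_0}\F$ and lies in $\cal{D}^{[-N,m_0]}(X_\et;\Z/n\Z)$; in particular $\F$ is bounded. Now represent $\F$ by a chain complex concentrated in degrees $[-N,m_0]$ and write each of its terms as a filtered colimit of constructible subsheaves, compatibly with the differentials (using that $\rm{Shv}(X_\et;\Z/n\Z)$ is the ind-category of its constructible objects, applied to the Grothendieck abelian category of complexes in these fixed degrees); this exhibits $\F$ as a filtered colimit $\colim_i\F_i$ in $\cal{D}^{\geq -N}$ with each $\F_i\in\cal{D}^{b,\geq -N}_{\rm{cons}}$. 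Compactness again makes $\rm{id}_\F$ factor through some $\F_{i_0}$, so $\F$ is a retract of an object of $\cal{D}^{b,\geq -N}_{\rm{cons}}$; since constructible sheaves are closed under retracts, so is this category, whence $\F\in\cal{D}^{b,\geq -N}_{\rm{cons}}$.

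The main obstacle is the first implication. Because the \'etale cohomological dimension of $X$ may be infinite, a constructible sheaf is genuinely \emph{not} a compact (nor a perfect) object of the full derived category $\cal{D}(X_\et;\Z/n\Z)$, and its resolutions by the generators $j_!\ud{\Z/n\Z}$ do not terminate; one must genuinely work on $\cal{D}^{\geq -N}$. The point that makes it work is that, after this truncation, both spectral sequences in the argument — the one of the resolution of $\cal{G}$ and the hypercohomology spectral sequence of a bounded-below complex over a qcqs space — have only finitely many nonzero entries in each total degree, so the relevant $\rm{Ext}$-groups still commute with filtered colimits. The schematic and adic cases run in parallel; only the citations for constructible sheaves and for coherence of the \'etale topos differ.
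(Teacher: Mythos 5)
Your proof is correct and follows essentially the same strategy as the paper's: for the forward direction, reduce to a single constructible sheaf, resolve it by finite sums of $j_!\ud{\Z/n\Z}$ with $j$ quasi-compact \'etale, and run the Ext spectral sequence together with the fact that cohomology of qcqs spaces commutes with filtered colimits; for the converse, present $\F$ as a filtered colimit of bounded constructible objects and use the retract argument. The one genuine difference is in the converse: the paper first proves the equivalence $\rm{Ind}\big(\cal{D}^{b,\geq 0}_{\rm{cons}}\big)\simeq \cal{D}^{\geq 0}$ via the adjoint-functor machinery, whereas you first extract boundedness from $\F=\colim_m\tau^{\leq m}\F$ and then build the filtered colimit by hand out of constructible subcomplexes of a representing complex in fixed degrees; your route is more elementary and avoids the Ind-category formalism (and your forward direction is, if anything, more careful than the paper's about passing from direct sums of sheaves to filtered colimits of uniformly bounded-below complexes). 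One small slip: on the $E_1$-page the correct bounds are $s\geq 0$ and $t\geq M$ (not $s,t\geq 0$ when $M<0$), but this still gives finitely many nonzero entries in each total degree, which is all the argument needs.
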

\begin{proof}
    Without loss of generality, we can assume that $N=0$. \smallskip 
    
    {\it Step~$1$. The ``if'' direction.} An easy argument using the Ext spectral sequence (see \cite[\href{https://stacks.math.columbia.edu/tag/07AA}{Tag 07AA}]{stacks-project}) implies that we can assume that $\F$ is an (abelian) constructible sheaf. Then the question boils down to showing that $\rm{R}\Gamma_\et(X, -)$ and $\rm{R}\ud{\rm{Hom}}^i(\F, -)$ commute with arbitrary direct sums in $\rm{Shv}(X_\et; \Z/n\Z)$. \smallskip

    First, we observe that $\rm{R}\Gamma_\et(X, -)$ commutes with direct sums (in $\rm{Shv}(X_\et; \Z/n\Z)$) due to \cite[Lemma 2.3.13(i)]{H3} and \cite[\href{https://stacks.math.columbia.edu/tag/03Q5}{Tag 03Q5}]{stacks-project}. \smallskip

    Now we show that $\rm{R}\ud{\rm{Hom}}^i(\F, -)$ commutes with direct sums (in $\rm{Shv}(X_\et; \Z/n\Z)$). For this, we first consider the case $\F=f_!\left(\ud{\Z/n\Z}\right)$ for a qcqs \'etale morphism $f\colon U \to X$. Then the claim follows from the isomorphism
    \[
    \rm{R}\ud{\Hom}_X(f_! \ud{\Z/n\Z}, -) \simeq \rm{R}f_*\rm{R}\ud{\Hom}_U(\ud{\Z/n\Z}, f^*-) \simeq \rm{R}f_*f^*(-)
    \]
    and the fact that $\rm{R}f_*$ commutes with direct sums (see \cite[Lemma 2.3.13(ii)]{H3} and \cite[\href{https://stacks.math.columbia.edu/tag/09Z1}{Tag 09Z1}]{stacks-project}). \smallskip

    Now, for a general constructible sheaf $\F$, we use a resolution of the form
    \[
    \dots \to f_{1, !} \ud{\Z/n \Z} \to f_{0, !} \ud{\Z/n\Z} \to \F \to 0,
    \]
    with $f_i\colon X_i \to X$ being qcqs \'etale maps (existence of such a presentation follows from \cite[\href{https://stacks.math.columbia.edu/tag/095N}{Tag 095N}]{stacks-project} in the scheme case and from the proof of \cite[Lemma 2.7.8]{H3} in the adic case). Then an easy argument with the Ext spectral sequence (see \cite[\href{https://stacks.math.columbia.edu/tag/07AA}{Tag 07AA}]{stacks-project}) implies that $\F$ is compact since each $f_{n, !}\left(\ud{\Z/n\Z}\right)$ is so. \smallskip

    {\it Step~$2$. We show that the natural morphism $\rm{Ind}\big(\cal{D}^{b, \geq 0}_{\rm{cons}}(X_\et; \Z/n\Z)\big) \to \cal{D}^{\geq 0}(X_\et; \Z/n\Z)$ is an equivalence.} First, we note that $\cal{D}^{\geq 0}(X_\et; \Z/n\Z)$ admits all (small) filtered colimits (see \cite[\href{https://kerodon.net/tag/03Y1}{Tag 03Y1}]{kerodon}), so the natural inclusion
    \[
        g\colon \cal{D}^{b, \geq 0}_{\rm{cons}}(X_\et; \Z/n\Z) \to \cal{D}^{\geq 0}(X_\et; \Z/n\Z)
    \]
    extends to the functor
    \[
        G\colon \rm{Ind}\big(\cal{D}^{b, \geq 0}_{\rm{cons}}(X_\et; \Z/n\Z)\big) \to \cal{D}^{\geq 0}(X_\et; \Z/n\Z)
    \]
    due to \cite[Lemma 5.3.5.8]{HTT}. Now \cite[Prop.~5.3.5.11(1)]{HTT} ensures that $G$ is fully faithful. So we are only left to show that $G$ is essentially surjective. \smallskip

    Now we note that the functor $G$ preserves finite direct sums because the same holds for $g$. Since $G$ preserves all filtered colimits, we conclude that $G$ preserves arbitrary direct sums. Furthermore,  \cite[Prop.~5.3.5.15]{HTT} implies that $G$ also preserves pushouts because $g$ does the same. Therefore, (the dual form of) \cite[\href{https://kerodon.net/tag/03UL}{Tag 03UL}]{kerodon} and \cite[\href{https://kerodon.net/tag/03UM}{Tag 03UM}]{kerodon} imply that $G$ commutes with all colimits. Therefore, it suffices to show that $\cal{D}^{b, \geq 0}_{\rm{cons}}(X_\et; \Z/n\Z)$ genenerates $\cal{D}^{\geq 0}(X_\et; \Z/n\Z)$ under colimits. \smallskip
    
    For this, we note that any object $\F\in \cal{D}^{\geq 0}(X_\et;\Z/n\Z)$ can be written as a (homotopy) colimit
    \[
    \colim_{n} \tau^{\leq -n}\F \to \F.
    \]
    Therefore, it suffices to show that $\cal{D}^{b, \geq 0}(X_\et; \Z/n\Z)$ is generated by $\cal{D}^{b, \geq 0}_{\rm{cons}}(X_\et; \Z/n\Z)$ under colimits. Since cofibers are colimits, we now reduce the question to showing that any abelian sheaf $\F\in \rm{Shv}\left(X_\et; \Z/n\Z\right)$ can be written as a (filtered) colimit of constructible abelian sheaves of $\Z/n\Z$-modules. This follows from \cite[Lemma 2.7.8]{H3} in the adic world and from \cite[\href{https://stacks.math.columbia.edu/tag/03SA}{Tag 03SA}]{stacks-project} in the scheme world. \smallskip
    
    {\it Step~$3$. Finish the proof.} Step~$2$ implies that any $\F\in \cal{D}^{\geq 0}(X_\et; \Z/n\Z)$ can be written as a filtered (homotopy) colimit
    \[
    \F = \colim_{i\in I} \F_i
    \]
    with $\F_i\in \cal{D}^{b, \geq 0}_{\rm{cons}}(X_\et; \Z/n\Z)$. If $\F$ is compact, we see that there is an equivalence
    \[
    \rm{Hom}(\F, \F) = \rm{Hom}(\F, \colim_{i\in I} \F_i) = \colim_{i\in I} \rm{Hom}(\F, \F_i).
    \]
    In particular, we note that the identity morphism $\rm{id} \colon \F \to \F$ factors through some $\F_i \to \F$. Thus, $\F$ is a direct summand of $\F_i$, so it must lie in $\cal{D}^{b, \geq 0}_{\rm{cons}}(X_\et; \Z/n\Z)$. 
\end{proof}

\bibliography{biblio}

\end{document}